\documentclass[a4paper]{amsart}

\usepackage{enumerate, amsmath, amsfonts, amssymb, amsthm, wasysym, graphics, graphicx, xcolor, url, hyperref, hypcap, a4wide, stmaryrd, shuffle, xargs, multicol, overpic, pdflscape, multirow, hvfloat, minibox, accents, etoolbox, dsfont}
\hypersetup{colorlinks=true, citecolor=darkblue, linkcolor=darkblue, urlcolor=darkblue}
\usepackage[all]{xy}
\usepackage[bottom]{footmisc}
\usepackage{tikz}\usetikzlibrary{trees,snakes,shapes,arrows,matrix,calc}
\graphicspath{{figures/}}
\makeatletter
\def\input@path{{figures/}}
\makeatother


\newtheorem{theorem}{Theorem}
\newtheorem{corollary}[theorem]{Corollary}
\newtheorem{proposition}[theorem]{Proposition}
\newtheorem{lemma}[theorem]{Lemma}
\newtheorem{definition}[theorem]{Definition}

\theoremstyle{definition}
\newtheorem{example}[theorem]{Example}

\newcommand{\N}{\mathbb{N}} 
\newcommand{\fS}{\mathfrak{S}} 
\renewcommand{\b}[1]{\mathbf{#1}} 
\renewcommand{\c}[1]{\mathcal{#1}} 
\newcommand{\f}[1]{\mathfrak{#1}} 

\newcommand{\set}[2]{\left\{ #1 \;\middle|\; #2 \right\}} 
\newcommand{\bigset}[2]{\big\{ #1 \;|\; #2 \big\}} 
\newcommand{\ssm}{\smallsetminus} 
\newcommand{\eqdef}{\mbox{\,\raisebox{0.2ex}{\scriptsize\ensuremath{\mathrm:}}\ensuremath{=}\,}} 
\newcommand{\Id}{\mathrm{Id}} 
\DeclareMathOperator{\inv}{inv} 

\newcommandx{\graphG}[1][1=G]{\mathrm{#1}} 
\newcommandx{\tree}[1][1=T]{\mathrm{#1}} 
\newcommand{\decoration}{\delta} 

\newcommand{\linearExtensions}{\mathcal{L}} 


\newcommand{\eP}{\mathfrak{P}} 
\newcommand{\eD}{\mathfrak{D}} 
\newcommand{\juxta}[2]{#1#2} 
\newcommand{\concatf}{\mathsf{concat}} 
\newcommand{\selectf}{\mathsf{select}} 
\newcommand{\stdf}{\mathsf{std}} 
\newcommand{\concat}[2]{\concatf(#1,#2)} 
\newcommand{\select}[2]{\selectf(#1,#2)} 
\newcommand{\std}[2]{\stdf(#1,#2)} 
\newcommand{\stdpos}[2]{\mathsf{stdp}(#1,#2)} 
\newcommand{\stdval}[2]{\mathsf{stdv}(#1,#2)} 
\newcommand{\arcs}{\Psi} 
\newcommand{\north}{\b{n}} 
\newcommand{\south}{\b{s}} 
\newcommand{\east}{\b{e}} 
\newcommand{\west}{\b{w}} 

\newcommandx{\Perm}[1][1=n]{\mathds{P}\mathrm{erm}(#1)} 
\newcommandx{\Asso}[1][1=n]{\mathds{A}\mathrm{sso}(#1)} 
\newcommandx{\Para}[1][1=n]{\mathds{P}\mathrm{ara}(#1)} 
\newcommandx{\Zono}[1][1=\decoration]{\mathds{Z}\mathrm{ono}(#1)} 
\newcommandx{\Permutreehedron}[1][1=\decoration]{\mathds{PT}(#1)} 
\newcommand{\fan}{\mathcal{F}} 
\newcommandx{\Fan}[1][1=\decoration]{\fan(#1)} 


\renewcommand{\MR}{\mathsf{MR}} 
\newcommand{\LR}{\mathsf{LR}} 
\newcommand{\swap}{\mathrm{swap}} 

\newcommand{\product}{\cdot} 
\newcommand{\coproduct}{\triangle} 
\newcommand{\shiftedShuffle}{\,\bar\shuffle\,} 
\newcommand{\convolution}{\star} 

\newcommand{\F}{\mathbb{F}} 
\newcommand{\PPT}{\mathbb{P}} 

\newcommandx{\surjection}[2][1=\decoration, 2=\decoration']{\Psi_{#1}^{#2}} 
\newcommandx{\surjectionSchroder}[2][1=\decoration, 2=\decoration']{{\Psi^\star}_{#1}^{#2}} 

\newcommand{\meet}{\wedge} 
\newcommand{\join}{\vee} 
\newcommand{\bigMeet}{\bigwedge} 
\newcommand{\bigJoin}{\bigvee} 
\newcommand{\projDown}{\pi^{\equiv}_{\!\downarrow\!}} 
\newcommand{\projUp}{\pi_{\equiv}^{\!\uparrow\!}} 

\newcommand{\fref}[1]{Figure~\ref{#1}} 
\newcommand{\ie}{\textit{i.e.}~} 
\newcommand{\viceversa}{\textit{vice versa}} 
\definecolor{darkblue}{rgb}{0,0,0.7} 
\newcommand{\darkblue}{\color{darkblue}} 
\newcommand{\red}{\color{red}} 
\newcommand{\blue}{\color{blue}} 
\newcommand{\defn}[1]{\emph{\darkblue #1}} 
\usepackage{todonotes}


\makeatletter
\def\l@section{\@tocline{1}{3pt}{0pc}{}{}}
\makeatother
\let\oldtocpart=\tocpart
\renewcommand{\tocpart}[2]{\hspace{0em}\bf\large\oldtocpart{#1}{#2}}
\let\oldtocsection=\tocsection
\renewcommand{\tocsection}[2]{\hspace{0em}\bf\oldtocsection{#1}{#2}}


\title{Hopf algebras on decorated noncrossing arc diagrams}

\author{Vincent Pilaud}
\address{CNRS \& LIX, \'Ecole Polytechnique, Palaiseau}
\email{vincent.pilaud@lix.polytechnique.fr}
\urladdr{http://www.lix.polytechnique.fr/~pilaud/}

\thanks{Partially supported by the French ANR grants SC3A~(15\,CE40\,0004\,01) and CAPPS~(17\,CE40\,0018).}


\begin{document}

\begin{abstract}
Noncrossing arc diagrams are combinatorial models for the equivalence classes of the lattice congruences of the weak order on permutations.
In this paper, we provide a general method to endow these objects with Hopf algebra structures.
Specific instances of this method produce relevant Hopf algebras that appeared earlier in the literature.
\end{abstract}

\maketitle


\section{Introduction}

Combinatorial Hopf algebras are combinatorial vector spaces endowed with a product (that combines combinatorial objects) and a coproduct (that decomposes combinatorial objects), subject to a strong compatibility relation.
This paper is motivated by two particularly relevant combinatorial Hopf algebras: C.~Malvenuto and C.~Reutenauer's Hopf algebra~$\MR$ on permutations~\cite{MalvenutoReutenauer} and J.-L.~Loday and M.~Ronco's Hopf algebra~$\LR$ on binary trees~\cite{LodayRonco}.
Remarkably~\cite{HivertNovelliThibon-algebraBinarySearchTrees}, $\LR$ embeds as a Hopf subalgebra of~$\MR$ by sending each binary tree~$\tree$ to the sum of the permutations in a certain class~$\linearExtensions(\tree)$.
More precisely, the permutations in~$\linearExtensions(\tree)$ are the linear extensions of~$\tree$ (seen as the Hasse diagram of a poset oriented towards its root), or equivalently the permutations whose insertion in a binary search tree gives~$\tree$.
The resulting classes are equivalence classes of the sylvester congruence~\cite{HivertNovelliThibon-algebraBinarySearchTrees} on permutations, defined as the transitive closure of the rewriting rule~$UacVbW \equiv^\textrm{sylv} UcaVbW$ where~${a < b < c}$ are letters while~$U,V,W$ are words~on~$\N$.

The objective of the present work is to discuss similar Hopf algebra structures on congruence classes of all lattice quotients of the weak order on~$\fS_n$.
Several examples of relevant combinatorial structures arise from lattice quotients of the weak order.
The fundamental example is the Tamari lattice introduced by D.~Tamari in~\cite{Tamari} and largely studied since then (see the survey book~\cite{TamariFestschrift}).
It can be defined as the transitive closure of right rotations on binary trees.
It is also (isomorphic to) the quotient of the weak order on~$\fS_n$ by the above-mentioned sylvester congruence.
See \fref{fig:weakOrderTamariLattice}.
Many other relevant lattice quotients of the weak order have been studied, see in particular~\cite{Reading-CambrianLattices, ChatelPilaud, PilaudPons-permutrees, LawReading, Giraudo, Law, Pilaud-brickAlgebra}.
N.~Reading provided in~\cite{Reading-arcDiagrams} a powerful combinatorial description of the lattice congruences of the weak order and of their congruence classes in terms of collections of certain arcs and noncrossing arc diagrams.

\begin{figure}[t]
	\capstart
	\centerline{$\vcenter{\includegraphics[scale=.8]{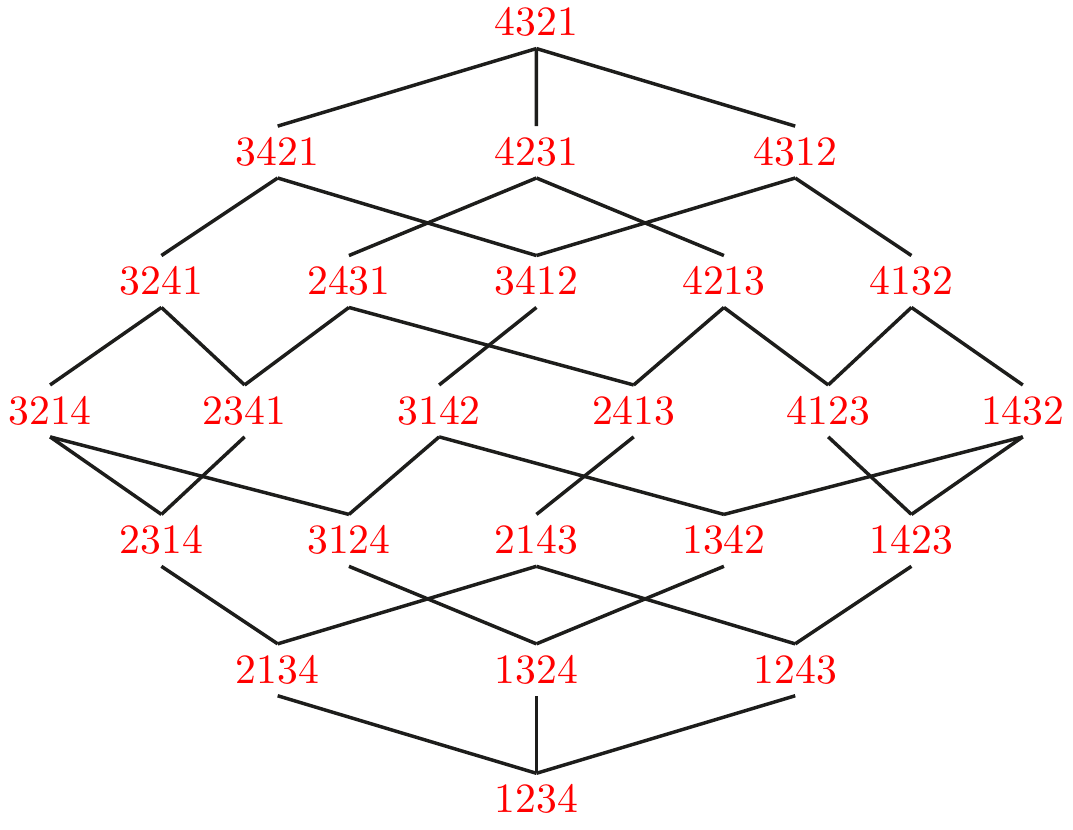}} \hspace{-7cm} \vcenter{\includegraphics[scale=.62]{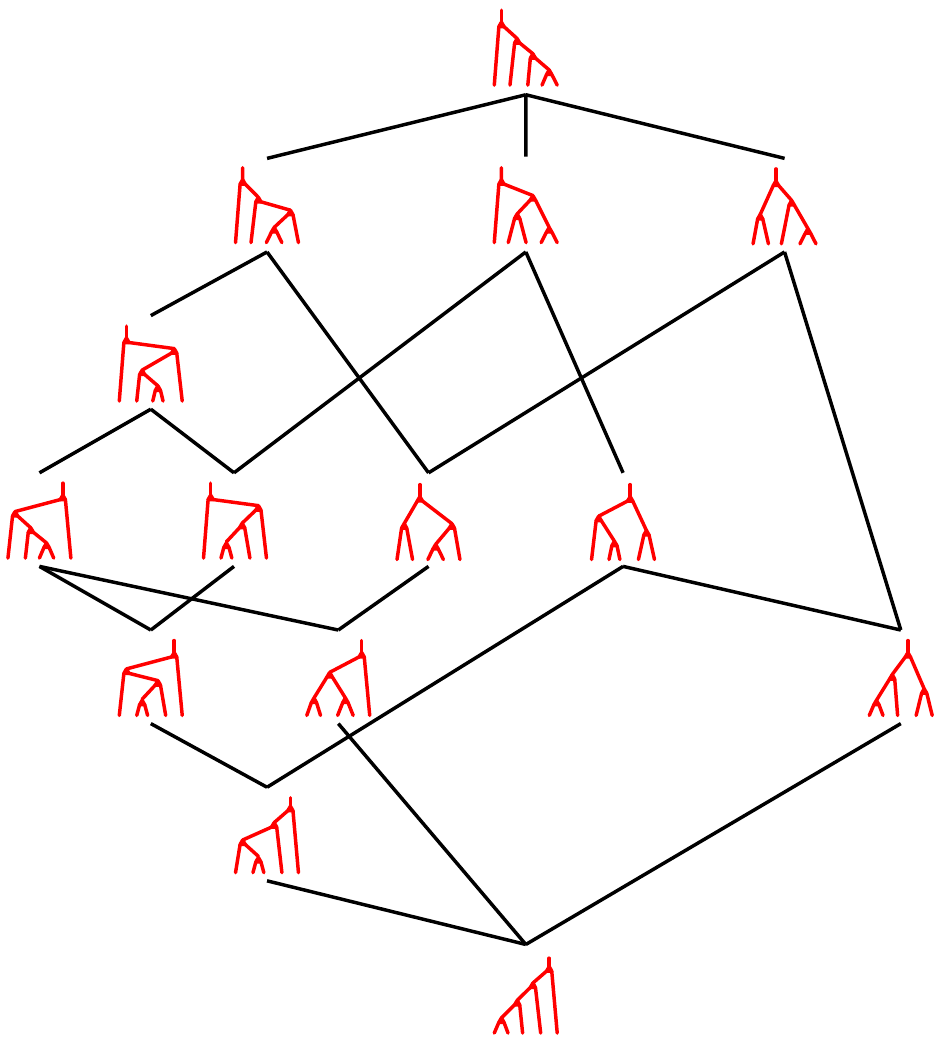}}$}
	\caption{The weak order (left) and the Tamari lattice (right).}
	\label{fig:weakOrderTamariLattice}
\end{figure}

The search for Hopf algebra structures on congruence classes of lattice quotients of the weak order was pioneered by N.~Reading.
In~\cite{Reading-HopfAlgebras}, he studied Hopf subalgebras of~$\MR$ generated by sums of permutations over the classes of a fixed lattice congruence~$\equiv_n$ on each~$\fS_n$ for~${n \ge 0}$.
He called translational (resp.~insertional) certain families~$(\equiv_n)_{n \in \N}$ of congruences that yield a subalegbra (resp.~subcoalgebra).
This approach produces relevant Hopf algebras indexed by interesting combinatorial objects such as permutations~\cite{MalvenutoReutenauer}, binary trees~\cite{LodayRonco}, diagonal rectangulations~\cite{LawReading} (or equivalently twin binary trees~\cite{Giraudo}), sashes~\cite{Law}, certain pipe dreams called twists~\cite{Pilaud-brickAlgebra}, etc.
However, the conditions on these families of congruences are rather constrained.

A more recent approach, initiated by G.~Chatel and V.~Pilaud for the Cambrian algebra~\cite{ChatelPilaud} and extended by V.~Pilaud and V.~Pons for the permutree algebra~\cite{PilaudPons-permutrees}, consists of constructing subalgebras of decorated versions of the algebra~$\MR$.
For example, \cite{ChatelPilaud} considers simultaneously all Cambrian congruences defined in~\cite{Reading-CambrianLattices}.
These congruences are given by certain rewriting rules~\cite{Reading-CambrianLattices} generalizing the sylvester congruence, and their classes are given by linear extensions of certain Cambrian trees~\cite{LangePilaud, ChatelPilaud} generalizing binary trees.
Since these congruences depend on a sequence of signs, the Cambrian algebra of~\cite{ChatelPilaud} is constructed as a subalgebra of a Hopf algebra on signed permutations generalizing~$\MR$ and studied by J.-C.~Novelli and J.-Y.~Thibon in~\cite{NovelliThibon-coloredHopfAlgebras}.
The same idea was used in~\cite{PilaudPons-permutrees} to construct an algebra on permutrees.
Note that the Hopf algebra of~\cite{NovelliThibon-coloredHopfAlgebras} on signed permutations is also useful for type~$B$ generaizations, see for instance~\cite{FoissyFromentin, JosuatNovelliThibon}.

In this paper, we explore this approach further to construct Hopf algebras on other families of congruences of the weak order.
Starting with a graded set of decorations~$\f{X}$ endowed with an operation of concatenation and an operation of selection that fulfill natural compatibility relations (see Definition~\ref{def:decorationSet}), we construct a Hopf algebra on permutations decorated with elements of~$\f{X}$.
Provided a well-chosen map~$\arcs$ from the decoration set~$\f{X}$ to the lattice congruences of the weak order (see Definition~\ref{def:conservative}), we then construct a Hopf algebra on the classes of the lattice congruences in the image of~$\arcs$.
This algebra is obtained as a Hopf subalgebra of the Hopf algebra of $\f{X}$-decorated permutations.
The choice of~$\f{X}$ and~$\arcs$ leaves quite some flexibility and allows to construct different relevant Hopf algebras on lattice congruence classes.
In this paper, we apply this general recipe in two particular settings:
\begin{enumerate}[(i)]
\item In the first setting, the image of~$\arcs$ is a family of lattice congruences of the weak order that simultaneously generalize the permutree congruences of~\cite{PilaudPons-permutrees} and the twist congruences of~\cite{Pilaud-brickAlgebra}. The resulting Hopf algebra contains (as Hopf subalgebras) those of~\cite{PilaudPons-permutrees, Pilaud-brickAlgebra}. \\[-.2cm]
\item In the second setting, the map~$\arcs$ is surjective so that we obtain a Hopf algebra involving the classes of all lattice congruences of the weak order. It contains (as Hopf subalgebras) the algebras of~\cite{MalvenutoReutenauer, LodayRonco, ChatelPilaud, PilaudPons-permutrees} but not those of~\cite{LawReading, Giraudo, Pilaud-brickAlgebra}.
\end{enumerate}
Let us mention that, although our method covers simultaneously all lattice congruences and allows quite some flexibility on the resulting Hopf algebra structure, it is restricted to lattice congruences of the weak order. Many monoid congruences are not lattice congruences of the weak order, but give rise to relevant subalgebras of~$\MR$. For instance, the plactic monoid related to the Robinson-Schensted-Knuth insertion gives rise to the Hopf algebra of S.~Poirier and C.~Reutenauer on Young tableaux~\cite{PoirierReutenauer}.

The paper is organized as follows.
In Section~\ref{sec:latticeCongruences}, we first recall N.~Reading's combinatorial model for lattice quotients of the weak order on~$\fS_n$ in terms of arc diagrams, and provide a combinatorial description of the surjection map from permutations to noncrossing arc diagrams of any fixed lattice congruence of~$\fS_n$.
In Section~\ref{sec:generalRecipe}, we present our general recipe to construct Hopf algebras on permutations and arc diagrams decorated with a given decoration set.
Finally, Section~\ref{sec:examples} is devoted to some relevant applications of this general recipe.


\section{Lattice congruences of the weak order and arc diagrams}
\label{sec:latticeCongruences}

We first review a powerful combinatorial interpretation of the lattice quotients of the weak order on permutations in terms of arc diagrams.
All results presented in this section are either borrowed or directly follow from N.~Reading's work on noncrossing arc diagrams~\cite{Reading-arcDiagrams}.


\subsection{Canonical representations of permutations and noncrossing arc diagrams}
\label{subsec:canonicalRepresentations}

Consider a finite lattice~$(L,\le,\meet,\join)$.
A \defn{join representation} of~$x \in L$ is a subset~$J \subseteq L$ such that~${x = \bigJoin J}$.
Such a representation is \defn{irredundant} if~$x \ne \bigJoin J'$ for a strict subset~$J' \subsetneq J$.
The irredundant join representations of an element~$x \in L$ are ordered by containement of the lower ideals of their elements, \ie~$J \le J'$ if and only if for any~$y \in J$ there exists~$y' \in J'$ such that~$y \le y'$ in~$L$.
When this order has a minimal element, it is called the \defn{canonical join representation} of~$x$.
All elements of the canonical join representation~$x = \bigJoin J$ are then \defn{join-irreducible}, \ie cover a single element.
A lattice is \defn{join-semidistributive} when every element has a canonical join representation.
Equivalently~\cite[Thm.~2.24]{FreeseNation}, ${x \join z = y \join z \Longrightarrow x \join z = (x \meet y) \join z}$ for any~${x, y, z \in L}$.
\defn{Canonical meet representations}, \defn{meet-irreducible elements} and \defn{meet-semidistributive lattices} are defined dually.
A lattice is \defn{semi-distributive} if it is both join- and meet-semidistributive.

Let~$[n] \eqdef \{1, \dots, n\}$ and let~$[a,b] \eqdef \{a, \dots, b\}$ and~${{]a,b[} \eqdef \{a+1, \dots, b-1\}}$ for~$a < b$.
Consider the set~$\fS_n$ of permutations of~$[n]$.
An \defn{inversion} of~$\sigma = \sigma_1 \dots \sigma_n \in \fS_n$ is a pair~$(\sigma_i, \sigma_j)$ such that~${i < j}$ and~${\sigma_i > \sigma_j}$.
Denote by~$\inv(\sigma)$ the \defn{inversion set} of~$\sigma$.
The \defn{weak order} on~$\fS_n$ is defined by inclusion of inversion sets, that is $\sigma \le \tau$ if and only if ${\inv(\sigma) \subseteq \inv(\tau)}$.
Its minimal (resp.~maximal) element is the permutation~$1 \dots n$ (resp.~$n \dots 1$) and its cover relations correspond to swapping two consecutive entries in a permutation.
See \fref{fig:weakOrderTamariLattice}.
The weak order on~$\fS_n$ is known to be a semidistributive lattice.
The canonical join and meet representations of a permutation~$\sigma$ were explicitly described by N.~Reading in~\cite{Reading-arcDiagrams} as follows.

A \defn{descent} (resp.~\defn{ascent}) in~${\sigma = \sigma_1 \dots \sigma_n \in \fS_n}$ is a position~$i \in [n-1]$ such that~$\sigma_i > \sigma_{i+1}$ (resp.~${\sigma_i < \sigma_{i+1}}$).
For a descent~$i$ of~$\sigma$, define~$\underline{\lambda}(\sigma, i)$ to be the permutation whose entries are given by~$1 \dots (\sigma_{i+1} - 1)$ followed by~$\set{\sigma_j}{j < i, \, \sigma_j \in {]\sigma_{i+1}, \sigma_i[}}$ in increasing order, then~$\sigma_i \sigma_{i+1}$, then~$\set{\sigma_j}{j > i+1, \, \sigma_j \in {]\sigma_{i+1}, \sigma_i[}}$ in increasing order, and finally~$(\sigma_i + 1) \dots n$.
This permutation~$\underline{\lambda}(\sigma, i)$ is join-irreducible since it has a unique descent~$\sigma_i > \sigma_{i+1}$.
We define dually a meet-irreducible permutation~$\overline{\lambda}(\sigma, i) \eqdef \omega_\circ \underline{\lambda}(\omega_\circ\sigma,i)$ for each ascent~$i$ of~$\sigma$, where~$\omega_\circ \eqdef [n, n-1, \dots, 2, 1]$ is the longest permutation of~$\fS_n$.

\begin{theorem}[{\cite[Thm.~2.4]{Reading-arcDiagrams}}]
\label{thm:joinMeetRepresentationsPermutations}
The canonical join and meet representations of a permutation~$\sigma = \sigma_1 \dots \sigma_n$ are given by~$\bigJoin \set{\underline{\lambda}(\sigma, i)}{\sigma_i > \sigma_{i+1}}$ and~$\bigMeet \set{\overline{\lambda}(\sigma, i)}{\sigma_i < \sigma_{i+1}}$.
\end{theorem}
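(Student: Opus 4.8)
The plan is to establish the formula for canonical join representations; the dual formula for canonical meet representations then follows formally. Indeed, left multiplication by~$\omega_\circ$ is an anti-automorphism of the weak order (it complements inversion sets), it exchanges the ascents of~$\sigma$ with the descents of~$\omega_\circ\sigma$, and by definition~$\overline{\lambda}(\sigma,i) = \omega_\circ\,\underline{\lambda}(\omega_\circ\sigma,i)$; so transporting the canonical join representation of~$\omega_\circ\sigma$ through~$\omega_\circ$ yields the canonical meet representation of~$\sigma$. Throughout I identify a permutation with its inversion set, recorded as a set of pairs of values~$(a,b)$ with~$a>b$ and~$a$ to the left of~$b$ in the one-line notation, and I use the standard facts that such sets are transitively closed and co-closed, and that the weak order is their order by inclusion.

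First I would compute the inversion set of~$\underline{\lambda}(\sigma,i)$ for a descent~$i$. Writing $A = \set{\sigma_j}{j<i,\ \sigma_j \in {]\sigma_{i+1},\sigma_i[}}$ and $B = \set{\sigma_j}{j>i+1,\ \sigma_j \in {]\sigma_{i+1},\sigma_i[}}$, reading off the defining word gives
\[
\inv\big(\underline{\lambda}(\sigma,i)\big) = \{(\sigma_i,\sigma_{i+1})\} \cup \set{(\sigma_i,b)}{b\in B} \cup \set{(a,\sigma_{i+1})}{a\in A} \cup \set{(a,b)}{a\in A,\, b\in B,\, a>b}.
\]
Every pair here is an inversion of~$\sigma$, so $\underline{\lambda}(\sigma,i) \le \sigma$ and hence $\bigJoin_i \underline{\lambda}(\sigma,i) \le \sigma$. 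For the reverse inequality I would show that each inversion of~$\sigma$ lies in the transitive closure of $\bigcup_i \inv(\underline{\lambda}(\sigma,i))$; since this closure is contained in the (transitively closed) inversion set of the join, this gives $\sigma \le \bigJoin_i \underline{\lambda}(\sigma,i)$. Writing $a=\sigma_p$ and $b=\sigma_q$ with $p<q$, the membership criterion read off from the display is that $(a,b)\in\inv(\underline{\lambda}(\sigma,i))$ precisely when $i$ is a descent with $p\le i<q$, $\sigma_i\ge a$, and $\sigma_{i+1}\le b$. I would then argue by induction on~$a-b$: if some descent $i\in[p,q-1]$ meets this criterion we are done; otherwise, scanning $\sigma_p,\dots,\sigma_q$ (which run from~$a$ down to~$b$) shows that no entry~$\ge a$ is immediately followed by an entry~$\le b$, so some interior position carries a value~$v$ with $b<v<a$, and then $(a,v)$ and $(v,b)$ are inversions of~$\sigma$ of smaller height, so transitivity and the induction hypothesis place $(a,b)$ in the closure. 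This proves $\sigma = \bigJoin_i \underline{\lambda}(\sigma,i)$.

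Next I would prove irredundancy. The crucial finite observation is that the descent inversion $(\sigma_i,\sigma_{i+1})$ meets the criterion above only for the index~$i$ itself, since the criterion forces the descent index to lie in $[p,q-1]=\{i\}$; thus $(\sigma_i,\sigma_{i+1}) \in \inv(\underline{\lambda}(\sigma,j))$ if and only if $j=i$. Let $\sigma^{(i)}$ be the permutation covered by~$\sigma$ obtained by swapping the entries in positions~$i$ and~$i+1$, so that $\inv(\sigma^{(i)}) = \inv(\sigma)\smallsetminus\{(\sigma_i,\sigma_{i+1})\}$. Then for every $j\ne i$ the set $\inv(\underline{\lambda}(\sigma,j))$ is contained in $\inv(\sigma^{(i)})$, whence $\bigJoin_{j\ne i}\underline{\lambda}(\sigma,j) \le \sigma^{(i)} < \sigma$. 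Hence the representation is irredundant, and its terms form an antichain of join-irreducibles.

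The substantial step is minimality: I would show the representation refines \emph{every} irredundant join representation $\sigma=\bigJoin J$, which identifies it as the canonical one. Fix a descent~$i$. From $\bigJoin J=\sigma\not\le\sigma^{(i)}$ we obtain some $j\in J$ with $j\not\le\sigma^{(i)}$; since $\inv(j)\subseteq\inv(\sigma)$ while $\inv(\sigma)\smallsetminus\inv(\sigma^{(i)})=\{(\sigma_i,\sigma_{i+1})\}$, this forces $(\sigma_i,\sigma_{i+1})\in\inv(j)$. I claim $\underline{\lambda}(\sigma,i)\le j$. This is where co-closedness carries the argument: applying it to $(\sigma_i,\sigma_{i+1})\in\inv(j)$ with an intermediate value $a\in A$ (resp.\ $b\in B$) yields $(\sigma_i,a)\in\inv(j)$ or $(a,\sigma_{i+1})\in\inv(j)$ (resp.\ $(\sigma_i,b)$ or $(b,\sigma_{i+1})$), and in each case the positional data of~$\sigma$ shows the unwanted alternative is not even an inversion of~$\sigma$, hence not of~$j$; a further application produces the pairs $(a,b)$. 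Thus $\inv(\underline{\lambda}(\sigma,i))\subseteq\inv(j)$, so each term of our representation lies below a term of~$J$. Our representation is therefore the minimum irredundant join representation, \ie the canonical join representation. I expect this co-closedness argument, rather than the inversion-set bookkeeping, to be the main obstacle.
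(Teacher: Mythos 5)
This statement is one the paper does not prove at all: it is imported verbatim from N.~Reading's work (cited as~\cite[Thm.~2.4]{Reading-arcDiagrams}), so there is no internal proof to compare yours against. Judged on its own merits, your blind proof is correct and complete. The inversion-set computation of~$\inv\big(\underline{\lambda}(\sigma,i)\big)$ is right, and your membership criterion ($(a,b)\in\inv(\underline{\lambda}(\sigma,i))$ for an inversion~$(a,b)=(\sigma_p,\sigma_q)$ of~$\sigma$ iff $p\le i<q$, $\sigma_i\ge a$, $\sigma_{i+1}\le b$) checks out in all four cases of the display; the induction on~$a-b$ via an interior ``middle'' value~$v\in{]b,a[}$ correctly establishes $\sigma=\bigJoin_i\underline{\lambda}(\sigma,i)$, and the observation that the descent inversion~$(\sigma_i,\sigma_{i+1})$ lies only in~$\inv(\underline{\lambda}(\sigma,i))$ gives irredundancy through the cover~$\sigma^{(i)}$. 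The minimality step is also sound: the forced containment~$(\sigma_i,\sigma_{i+1})\in\inv(j)$ for some~$j\in J$, followed by the co-closedness argument (each unwanted alternative fails to be an inversion of~$\sigma$, hence of~$j$, and one further application yields the pairs~$(a,b)$ with~$a\in A$, $b\in B$, $a>b$), shows~$\underline{\lambda}(\sigma,i)\le j$, so your representation is the minimum in the refinement order; the meet half then follows by the anti-automorphism~$\omega_\circ$ exactly as you say. Your route is purely combinatorial, working entirely inside inversion sets (transitively closed and co-closed sets of pairs), whereas Reading's original argument goes through lattice-theoretic properties of the weak order developed in his paper; your version has the advantage of being self-contained and elementary, at the cost of the bookkeeping you carried out explicitly.
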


As N.~Reading observed in~\cite{Reading-arcDiagrams}, the permutation~$\underline{\lambda}(\sigma, i)$ is uniquely determined by the values~$\sigma_i$ and~$\sigma_{i+1}$ and by the set~$\set{\sigma_j}{j < i, \, \sigma_j \in {]\sigma_{i+1}, \sigma_i[}}$.
This combinatorial data can be recorded in the following combinatorial gadgets.

An \defn{arc} is a quadruple~$(a, b, n, S)$ where~$a,b,n \in \N$ are such that~$1 \le a < b \le n$, and~${S \subseteq {]a,b[}}$.
We define~$\c{A}_n \eqdef \set{(a, b, n, S)}{1 \le a < b \le n \text{ and } S \subseteq {]a,b[}}$, and~$\c{A} \eqdef \bigsqcup_{n \in \N} \c{A}_n$.
For a permutation~${\sigma \in \fS_n}$, we denote by~$\underline{\alpha}(i, i+1, \sigma) \eqdef (\sigma_{i+1}, \sigma_i, n, \set{\sigma_j}{j < i \text{ and } \sigma_j \in {]\sigma_{i+1}, \sigma_i[} \,})$ the arc associated to a descent~$i$ of~$\sigma$ and by~$\underline{\delta}(\sigma) \eqdef \set{\underline{\alpha}(i, i+1, \sigma)}{\sigma_i > \sigma_{i+1}}$ the set of arcs corresponding to all descents of~$\sigma$. We define~$\overline{\alpha}$ and~$\overline{\delta}$ dually for ascents.

An arc~$(a, b, n, S)$ can be visually represented as an $x$-monotone continuous curve wiggling around the horizontal axis, with endpoints~$a$ and~$b$, and passing above the points of~$S$ and below the points of~${]a,b[} \ssm S$.
Using this representation, N.~Reading provided a convenient visual interpretation of~$\underline{\delta}$ and~$\overline{\delta}$.
For this, represent the permutation~$\sigma$ by its \defn{permutation table}~$(\sigma_i, i)$.
(This unusual choice of orientation is necessary to fit later with the existing constructions ~\cite{LodayRonco, HivertNovelliThibon-algebraBinarySearchTrees, ChatelPilaud, PilaudPons-permutrees}.)
Draw arcs between any two consecutive dots~$(\sigma_i, i)$ and~$(\sigma_{i+1}, i+1)$, colored green if~$\sigma_i < \sigma_{i+1}$ is an ascent and red if~$\sigma_i > \sigma_{i+1}$ is a descent.
Then move all dots down to the horizontal axis, allowing the segments to curve, but not to cross each other nor to pass through any dot.
The set of red (resp.~green) arcs is then the set~$\underline{\delta}(\sigma)$ (resp.~$\overline{\delta}(\sigma)$) corresponding to the canonical join (resp.~meet) representation of~$\sigma$.
See \fref{fig:noncrossingArcDiagrams} for illustrations of these maps.

\begin{figure}
	\capstart
	\centerline{\includegraphics[scale=.85]{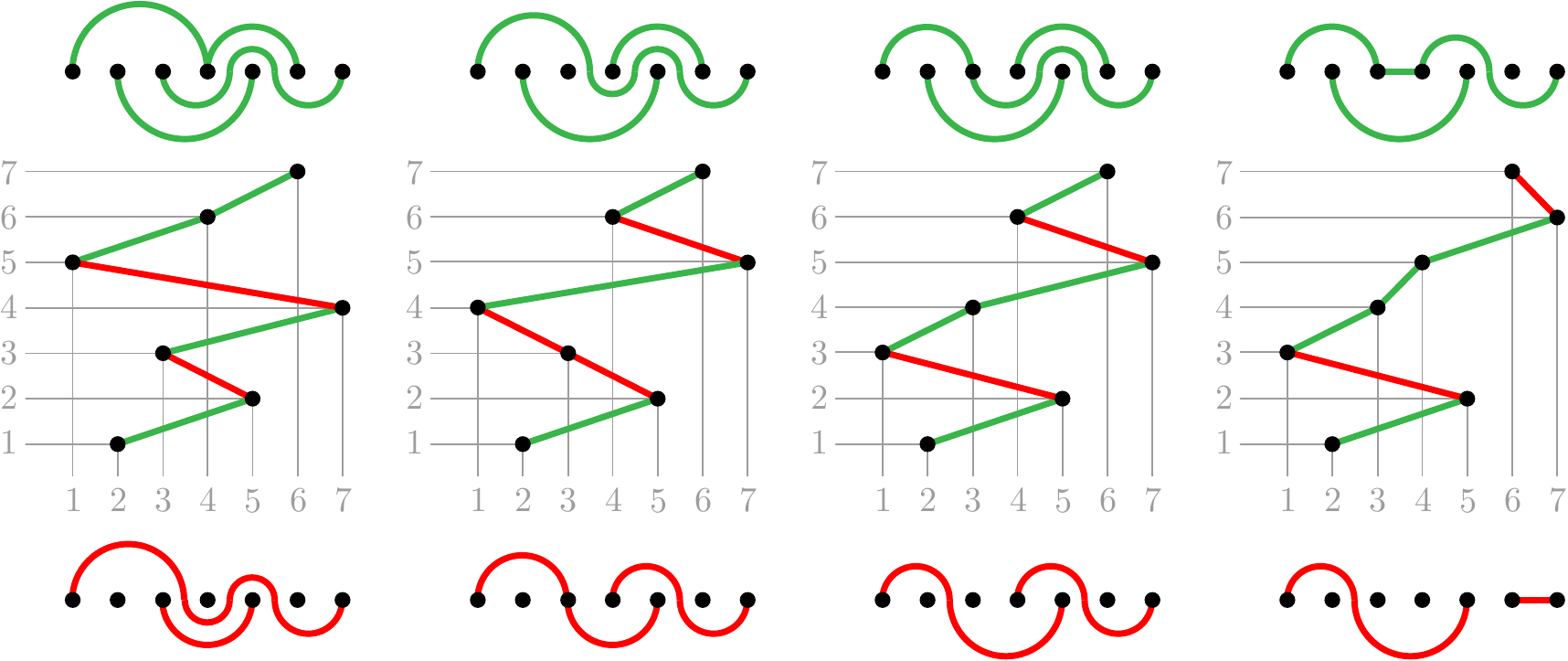}}
	\caption{The noncrossing arc diagrams~$\underline{\delta}(\sigma)$ (bottom) and~$\overline{\delta}(\sigma)$ (top) for the permutations~$\sigma = 2537146$, $2531746$, $2513746$, and $2513476$.}
	\label{fig:noncrossingArcDiagrams}
\end{figure}

This representation provides a natural characterization of the sets of join-irreducible (resp.~meet-irreducible) permutations that form canonical join (resp.~meet) representations.
We say that two arcs \defn{cross} if the interior of the two curves representing these arcs intersect.
In other words, the arcs~$(a, b, n, R)$ and~$(c, d, n, S)$ cross if there exist~$r, s \in \big( [a,b] \cap [c,d] \big) \ssm \big( \{a,b\} \cap \{c,d\} \big)$ such that~$r \in \big( R \cup \{a,b\} \big) \ssm S$ while $s \in \big( S \cap \{c,d\} \big) \ssm R$.
A \defn{noncrossing arc diagram} is a collection~$\c{D}$ of arcs of~$\c{A}_n$ such that for any two arcs~$\alpha, \beta \in \c{D}$ do not cross and have distinct left endpoints and distinct right endpoints (but the right endpoint of~$\alpha$ can be the left endpoint of~$\beta$ or~\mbox{\viceversa}).
See \fref{fig:noncrossingArcDiagrams} for examples of noncrossing arc diagrams.

\begin{theorem}[{\cite[Thm.~3.1]{Reading-arcDiagrams}}]
The maps~$\underline{\delta}$ and~$\overline{\delta}$ are bijections from permutations of~$\fS_n$ to noncrossing arc diagrams of~$\c{A}_n$.
\end{theorem}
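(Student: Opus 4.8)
The plan is to establish the statement for $\underline{\delta}$ only: since $\overline{\delta}$ is conjugate to $\underline{\delta}$ under the value-complementation $\sigma \mapsto \omega_\circ\sigma$ (which turns ascents into descents) together with the induced top--bottom reflection of arcs $(a,b,n,S) \mapsto (n+1-b,\, n+1-a,\, n,\, \set{n+1-s}{s \in {]a,b[} \ssm S})$, and since this reflection carries noncrossing arc diagrams to noncrossing arc diagrams, the case of $\overline{\delta}$ follows formally. For $\underline{\delta}$ I would argue well-definedness, injectivity, and surjectivity in turn.

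For well-definedness I must check that $\underline{\delta}(\sigma)$ is a noncrossing arc diagram. The arcs in $\underline{\delta}(\sigma)$ have left endpoints $\sigma_{i+1}$ and right endpoints $\sigma_i$ as $i$ ranges over the descents of $\sigma$; since $\sigma$ is a bijection these endpoints are automatically pairwise distinct, so the only real content is that no two arcs cross. I would prove this by a short case analysis on two descents $i < i'$: when $i' = i+1$ the two arcs meet only at the shared value $\sigma_{i+1}$ and their open spans are disjoint, and when $i' > i+1$ the four endpoint values are distinct and the decisive observation is that every element recorded in the set $S$ of the arc of $i$ sits at a position $< i < i'$, which pins down the relative above/below pattern of the two arcs on their overlapping interval and excludes a crossing in each configuration (disjoint, nested, or interleaved spans). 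The cleaner conceptual justification, which I would present alongside, is the permutation-table picture: the red segments joining $(\sigma_i,i)$ to $(\sigma_{i+1},i+1)$ are edges of a monotone staircase through the dots and thus form a planar, noncrossing configuration, and the flattening of the dots onto the horizontal axis is an ambient isotopy keeping the arcs $x$-monotone, so it cannot create crossings.

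Injectivity is immediate from Theorem~\ref{thm:joinMeetRepresentationsPermutations}. That theorem says $\underline{\delta}(\sigma)$ encodes the canonical join representation $\sigma = \bigJoin \set{\underline{\lambda}(\sigma,i)}{\sigma_i > \sigma_{i+1}}$, and each joinand $\underline{\lambda}(\sigma,i)$ is recovered uniquely from its arc $\underline{\alpha}(i,i+1,\sigma) = (\sigma_{i+1},\sigma_i,n,S)$, as recorded in the remark preceding the definition of arcs. Hence the arc diagram determines the set of canonical joinands, whose join is $\sigma$; so distinct permutations have distinct arc diagrams.

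Surjectivity is the crux and where I expect the real work to lie. Given a noncrossing arc diagram $\c{D} \subseteq \c{A}_n$, I would reconstruct a permutation $\sigma$ with $\underline{\delta}(\sigma) = \c{D}$ by inverting the flattening: lift the arcs of $\c{D}$ back to segments on distinct rows of a permutation table and read off $\sigma$, with the noncrossing hypothesis guaranteeing that such a lift exists and is unique. Equivalently, and more in the spirit of the lattice theory, I would let $J$ be the set of join-irreducible permutations encoded by $\c{D}$, set $\sigma \eqdef \bigJoin J$, and prove that $J$ is the canonical join representation of $\sigma$; once this is shown, $\underline{\delta}(\sigma) = \c{D}$ follows. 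The heart of the matter, and the step I expect to be the main obstacle, is precisely this last claim: one must convert the purely geometric noncrossing condition into the lattice-theoretic statement that $J$ is an \emph{irredundant} and \emph{minimal} join representation, so that semidistributivity of the weak order forces $J$ to be canonical. In contrast to well-definedness, which is a local two-arc check, this is a global property of the whole diagram and is where the semidistributivity of the weak order is genuinely used.
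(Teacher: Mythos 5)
Your reduction of $\overline{\delta}$ to $\underline{\delta}$, your well-definedness check, and your injectivity argument via Theorem~\ref{thm:joinMeetRepresentationsPermutations} are all sound. The genuine gap is surjectivity, which you yourself flag as ``the crux'' and ``the main obstacle'' but never actually prove: both of your plans stop exactly at the point where the work begins. The lattice-theoretic plan is essentially circular as stated. Letting $J$ be the join-irreducibles encoded by $\c{D}$ and $\sigma \eqdef \bigJoin J$, saying that one must show $J$ is the \emph{minimal} irredundant join representation of $\sigma$ is just restating the definition of canonical; join-semidistributivity only guarantees that \emph{some} canonical join representation of $\sigma$ exists, it gives no mechanism for concluding that your particular $J$ is it. One genuinely has to convert noncrossingness of $\c{D}$ into that conclusion, and no argument is offered. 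The geometric plan contains an incorrect claim: the noncrossing hypothesis does \emph{not} make the lift to a permutation table unique. When $\c{D}$ has several connected components, many vertical interleavings of the components produce valid-looking tables; all but one of them flatten to a diagram with \emph{extra} arcs (the descents created between components), and identifying the unique correct interleaving is precisely the hard part.

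This is also exactly where the paper (following Reading) puts the content. Right after the theorem it records the explicit inverse of \cite[Prop.~3.2]{Reading-arcDiagrams}: order the connected components of $\c{D}$ by the priority $X \prec Y$ when some arc $(a,b,n,S) \in \c{D}$ has $S \cap X \ne \varnothing$ and $a,b \in Y$, or $a,b \in X$ and $({]a,b[} \ssm S) \cap Y \ne \varnothing$; take the linear extension of this poset breaking ties by leftmost component first; and order the values inside each component decreasingly. Your $n=2$ sanity check already shows why the tie-breaking rule is not optional: for $\c{D} = \varnothing$ the two singleton components admit two linear extensions, giving $12$ and $21$, and only the leftmost-first choice returns a permutation whose descent diagram is $\c{D}$. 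A complete proof must either exhibit this (or an equivalent) explicit construction and verify it is a two-sided inverse of $\underline{\delta}$, or else prove directly that noncrossing sets of arcs are exactly the canonical join representations; your proposal does neither.
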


The reverse bijections~$\underline{\delta}^{-1}$ and~$\overline{\delta}^{-1}$ are explicitly described in~\cite[Prop.~3.2]{Reading-arcDiagrams}.
Briefly speaking, consider the poset of connected components of~$\c{D}$ ordered by (the transitive closure of) the priority $X \prec Y$ if there is an arc~$\alpha = (a, b, n, S) \in \c{D}$ with~$S \cap X \ne \varnothing$ and~$a, b \in Y$ or with~$a, b \in X$ and~$({]a,b[} \ssm S) \cap Y \ne \varnothing$.
To obtain~$\underline{\delta}{}^{-1}(\c{D})$ (resp.~$\overline{\delta}{}^{-1}(\c{D})$), choose the linear extension of this priority poset where ties are resolved by choosing first the leftmost (resp.~rightmost) connected component, and order decreasingly (resp.~increasingly) the values in each connected component.
See \fref{fig:noncrossingArcDiagrams}.


\subsection{Lattice quotients of the weak order}
\label{subsec:latticeQuotients}

Consider a finite lattice~${(L,\le,\meet,\join)}$. A \defn{lattice congruence} of~$L$ is an equivalence relation on~$L$ that respects the meet and the join operations, \ie such that $x \equiv x'$ and~$y \equiv y'$ implies ${x \meet y \, \equiv \, x' \meet y'}$ and~${x \join y \, \equiv \, x' \join y'}$.
Equivalently, the equivalence classes of~$\equiv$ are intervals of~$L$, and the up and down maps~$\projUp$ and~$\projDown$, respectively sending an element of~$L$ to the top and bottom elements of its equivalence class for~$\equiv$, are order-preserving.
A lattice congruence~$\equiv$ defines a \defn{lattice quotient}~$L/{\equiv}$ on the congruence classes of~$\equiv$ where the order relation is given by~$X \le Y$ if and only if there exists~$x \in X$ and~$y \in Y$ such that~$x \le y$.
The meet~$X \meet Y$ (resp.~the join~$X \join Y$) of two congruence classes~$X$ and~$Y$ is the congruence class of~$x \meet y$ (resp.~of~$x \join y$) for arbitrary representatives~$x \in X$~and~$y \in Y$.
Intuitively, the quotient~$L/{\equiv}$ is obtained by contracting the equivalence classes of~$\equiv$ in the lattice~$L$.
More precisely, we say that an element~$x$ is \defn{contracted} by~$\equiv$ if it is not minimal in its equivalence class of~$\equiv$, \ie if~$x \ne \projDown(x)$.
As each class of~$\equiv$ is an interval of~$L$, it contains a unique uncontracted element, and the quotient~$L/{\equiv}$ is isomorphic to the subposet of~$L$ induced by its uncontracted elements.
Moreover, the canonical join representations in the quotient~$\projDown(L)$ are precisely the canonical join representations of~$L$ that do not involve any contracted join-irreducible.
This yields the following.

\begin{theorem}[{\cite[Thm.~4.1]{Reading-arcDiagrams}}]
\label{thm:arcDiagrams}
Consider a lattice congruence~$\equiv$ of the weak order on~$\fS_n$, and let~$\c{I}_\equiv$ denote the arcs corresponding to the join-irreducible permutations not contracted by~$\equiv$.
\begin{enumerate}[(i)]
\item A permutation~$\sigma$ is minimal in its $\equiv$-congruence class if and only if~${\underline{\delta}(\sigma) \subseteq \c{I}_\equiv}$.
\item Sending a $\equiv$-congruence class with minimal permutation~$\sigma$ to the arc diagram~$\underline{\delta}(\sigma)$ defines a bijection between the congruence classes of~$\equiv$ and the noncrossing arc diagrams~in~$\c{I}_\equiv$.
\item The congruence~$\equiv$ is the transitive closure of the rewriting rule~$\sigma \to \sigma \cdot (i \; i+1)$ where~$i$ is a descent of~$\sigma$ such that~${\underline{\alpha}(i, i+1, \sigma)} \notin \c{I}_\equiv$.
\end{enumerate}
\end{theorem}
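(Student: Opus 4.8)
The plan is to obtain parts (i) and (ii) as formal consequences of Theorem~\ref{thm:joinMeetRepresentationsPermutations} together with the quotient description of canonical join representations recalled above, and to concentrate the actual work on part (iii).

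For (i), recall that $\c{I}_\equiv$ is by definition the set of arcs of the \emph{uncontracted} join-irreducibles, so $\underline{\alpha}(i,i+1,\sigma)\in\c{I}_\equiv$ holds exactly when $\underline{\lambda}(\sigma,i)$ is uncontracted. A permutation $\sigma$ is minimal in its class precisely when it is uncontracted, and by the quotient description this happens exactly when the canonical join representation of $\sigma$ involves no contracted join-irreducible. Since by Theorem~\ref{thm:joinMeetRepresentationsPermutations} that representation is $\bigJoin\set{\underline{\lambda}(\sigma,i)}{\sigma_i>\sigma_{i+1}}$, whose set of arcs is $\underline{\delta}(\sigma)$, the condition reads $\underline{\delta}(\sigma)\subseteq\c{I}_\equiv$, which is (i). For (ii), each class is an interval and so has a unique minimal permutation, so class $\mapsto\underline{\delta}(\min)$ is well defined and is injective because $\underline{\delta}$ is injective on $\fS_n$; by (i) its image is $\set{\underline{\delta}(\sigma)}{\sigma\text{ minimal}}=\set{\c{D}}{\c{D}\text{ noncrossing and }\c{D}\subseteq\c{I}_\equiv}$, while conversely any noncrossing $\c{D}\subseteq\c{I}_\equiv$ is $\underline{\delta}(\sigma)$ for a unique $\sigma$, minimal by (i). This yields the bijection.

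For (iii), write $\equiv'$ for the equivalence relation generated by the rewriting rule. First I would note that a move $\sigma\to\sigma\cdot(i\,i{+}1)$ at a descent $i$ is exactly a downward cover in the weak order deleting the inversion $(\sigma_i,\sigma_{i+1})$, and that $\underline{\lambda}(\sigma,i)$ is the (canonical) join-irreducible labelling this cover: it lies below $\sigma$ as a summand of the canonical join representation but not below $\sigma\cdot(i\,i{+}1)$, since $(\sigma_i,\sigma_{i+1})$ is one of its inversions, whence $\sigma\cdot(i\,i{+}1)\join\underline{\lambda}(\sigma,i)=\sigma$ and it is the minimal join-irreducible with this property. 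Granting the key lemma below --- that this cover is contracted by $\equiv$ precisely when $\underline{\alpha}(i,i+1,\sigma)\notin\c{I}_\equiv$ --- both inclusions are immediate. A legal move collapses a contracted cover and hence identifies $\equiv$-equivalent permutations, giving $\equiv'\subseteq\equiv$; conversely, as classes are intervals, any two congruent permutations are joined by a chain of covers lying inside their common class, every such internal cover is contracted and therefore (by the lemma) a legal move, giving $\equiv\subseteq\equiv'$. Thus $\equiv=\equiv'$, and (iii) rests entirely on the key lemma.

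The key lemma --- and the step I expect to be the main obstacle --- asserts that the cover $\sigma\cdot(i\,i{+}1)\lessdot\sigma$ is contracted by $\equiv$ if and only if its labelling join-irreducible $\underline{\lambda}(\sigma,i)$ is contracted. The clean route is through the semidistributivity of the weak order: in a finite semidistributive lattice the principal congruence $\con(x,y)$ collapsing a cover $y\lessdot x$ coincides with $\con(j,j_*)$, where $j$ is the join-irreducible labelling the cover and $j_*$ the unique element it covers. Granting this, $\sigma\cdot(i\,i{+}1)\equiv\sigma$ holds iff $\con(\underline{\lambda}(\sigma,i),\underline{\lambda}(\sigma,i)_*)\le{\equiv}$, which in turn holds iff $\underline{\lambda}(\sigma,i)$ is contracted, that is iff $\underline{\alpha}(i,i+1,\sigma)\notin\c{I}_\equiv$. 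I would prove the identity $\con(x,y)=\con(j,j_*)$ by the standard perspectivity argument for covers in a semidistributive lattice, or alternatively extract it directly from the quotient description of canonical join representations already used for (i). This closes the key lemma and hence (iii).
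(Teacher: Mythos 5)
Your proposal is correct, but note that the paper itself does not prove this theorem: it is imported from N.~Reading's work (the cited Thm.~4.1), and the only justification the paper offers is the paragraph preceding the statement (congruence classes are intervals; canonical join representations in the quotient are exactly those avoiding contracted join-irreducibles), followed by ``This yields the following.'' Your parts (i) and (ii) are exactly that sketch made explicit, using Theorem~\ref{thm:joinMeetRepresentationsPermutations} and the bijectivity of~$\underline{\delta}$, so they match the paper's (implicit) route. The real content of your proposal is part (iii), which the paper delegates entirely to the citation; your key lemma is precisely the forcing statement at the heart of Reading's own argument, and it does hold. In fact you do not need the general theory of semidistributive lattices to close it: perspectivity can be verified directly in the weak order via inversion sets. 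Writing $j = \underline{\lambda}(\sigma,i)$, $j_*$ for the unique element covered by $j$, and $\tau = \sigma\cdot(i\,i{+}1)$, one has $j \le \sigma$ and $j \not\le \tau$ because $(\sigma_i,\sigma_{i+1}) \in \inv(j) \ssm \inv(\tau)$, whence $\tau \join j = \sigma$ since $\tau \lessdot \sigma$; moreover $\inv(\tau) \cap \inv(j) = \inv(j) \ssm \{(\sigma_i,\sigma_{i+1})\} = \inv(j_*)$ is itself an inversion set, so $\tau \meet j = j_*$. Given this, both directions of your key lemma are one-line congruence computations: $j \equiv j_*$ yields $\sigma = \tau \join j \equiv \tau \join j_* = \tau$ (using $j_* \le \tau$), and $\sigma \equiv \tau$ yields $j = j \meet \sigma \equiv j \meet \tau = j_*$. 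The remaining ingredients of your (iii) --- that covers of an interval of the weak order are covers of the weak order, hence descent transpositions, and that any two elements of a class are linked by a chain of covers inside the class --- are correct as stated. So the proposal is complete modulo standard facts, and it actually supplies the proof that the paper leaves to its reference.
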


A dual statement holds replacing minimal by maximal and~$\underline{\delta}(\sigma)$ by~$\overline{\delta}(\sigma)$.
Moreover, the sets of arcs~$\bigset{\underline{\delta}(\sigma)}{\sigma \text{ join-irreducible}, \; \projDown(\sigma) = \sigma}$ and~$\bigset{\overline{\delta}(\sigma)}{\sigma \text{ meet-irreducible}, \; \projUp(\sigma) = \sigma}$ coincide.
We denote still this set~$\c{I}_\equiv$.


\subsection{Arc ideals}
\label{subsec:arcIdeals}

It remains to characterize the sets of arcs~$\c{I}_\equiv$ corresponding to the uncontracted join-irreducibles of a lattice congruence of the weak order.
This is again transparent on the arc representation of join-irreducible permutations.
An arc~$(a, d, n, S)$ is \defn{forced} by an arc~$(b, c, n, T)$ if~${a \le b < c \le d}$ and~${T = S \cap {]b,c[}}$. 
Graphically, it means that~$(b, c, n, T)$ is obtained by restricting the arc~$(a, d, n, S)$ to the interval~$[b,c]$.
We denote by~$(a, d, n, S) \prec (b, c, n, T)$ the forcing order.
\fref{fig:forcingOrderArcIdeals}\,(left) shows this order on~$\c{A}_4$.


\begin{figure}
	\capstart
	\centerline{\includegraphics[scale=.55]{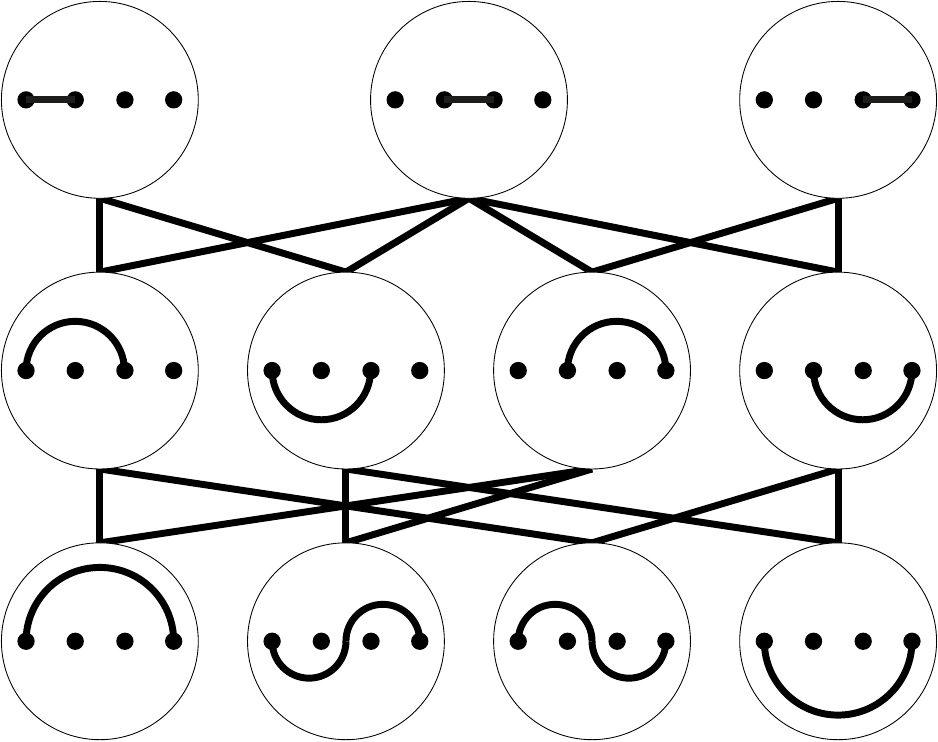} \hspace{1.5cm} \includegraphics[scale=.9]{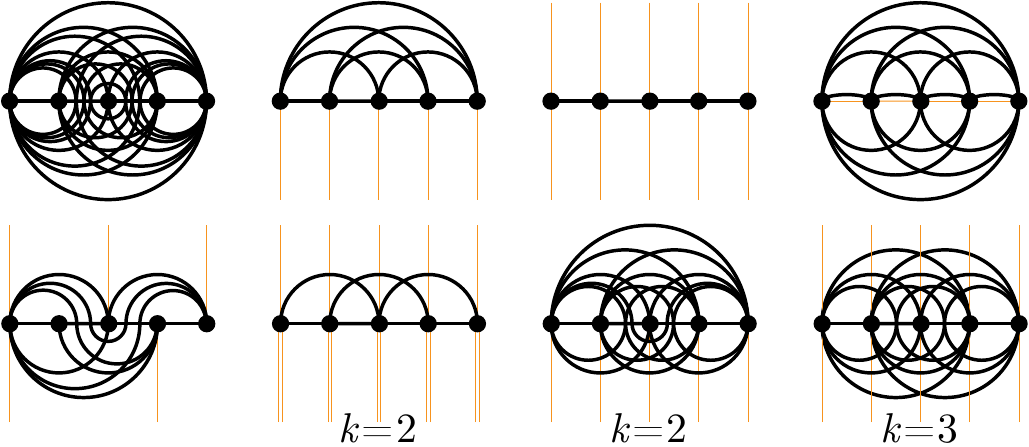}}
	\caption{The forcing order on arcs of~$\c{A}_4$ (left) and some examples of arc ideals~$\c{A}_{\north, \south, \east, \west}^{< k}$ (right) whose associated lattice congruence classes correspond to permutations, binary trees, binary sequences, diagonal rectangulations, permutrees, sashes, acyclic $2$-twists, and $3$-descent schemes. Walls are in red and $k = 1$ if not stated otherwise.}
	\label{fig:forcingOrderArcIdeals}
\end{figure}

\begin{theorem}[{\cite[Coro.~4.5]{Reading-arcDiagrams}}]
\label{thm:arcIdeals}
A set of arcs~$\c{I} \subseteq \c{A}_n$ corresponds to the set of uncontracted join-irreducible permutations of some lattice congruence~$\equiv$ of the weak order on~$\fS_n$ if and only if it is an upper ideal of the forcing order~$\prec$.
\end{theorem}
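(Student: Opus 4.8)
The plan is to transport the statement through the bijection between join-irreducible permutations of~$\fS_n$ and arcs of~$\c{A}_n$, and to recognise the restriction order~$\prec$ as the \emph{forcing order} on join-irreducibles. Recall that in a finite semidistributive (indeed congruence-uniform) lattice such as the weak order, the join-irreducibles carry a forcing order~$\le_f$ defined by $j \le_f j'$ if and only if $\con(j) \le \con(j')$ in the congruence lattice, equivalently if every congruence contracting~$j'$ also contracts~$j$; and the assignment $\equiv \mapsto \c{I}_\equiv$ (the set of uncontracted join-irreducibles) is a bijection from the lattice congruences onto the order filters of~$\le_f$. This is exactly the point where the statement recalled before Theorem~\ref{thm:arcDiagrams}, that the canonical join representations of the quotient are those not involving a contracted join-irreducible, is used. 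Granting this, the theorem reduces to the purely combinatorial claim that, under~$\underline{\delta}$, the forcing order~$\le_f$ on join-irreducible permutations coincides with the restriction order~$\prec$ on the associated arcs. I would then prove the two inclusions separately, since $\prec \,\subseteq\, \le_f$ yields necessity (every~$\c{I}_\equiv$, being a filter of~$\le_f$, is a filter of~$\prec$) while $\le_f \,\subseteq\, \prec$ yields sufficiency (every filter of~$\prec$ is a filter of~$\le_f$, hence is realised as some~$\c{I}_\equiv$).

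For the inclusion $\prec \,\subseteq\, \le_f$ (restriction forces), I would first reduce to the cover relations of~$\prec$, namely the one-step restrictions $(a,d,n,S) \prec (a{+}1,\,d,\,n,\,S\cap{]a{+}1,d[})$ and $(a,d,n,S) \prec (a,\,d{-}1,\,n,\,S\cap{]a,d{-}1[})$, the general case following by transitivity of forcing. For such an elementary pair I would exhibit, inside the weak order, the braid hexagon, that is the $2$-face arising from a relation $s_i s_{i+1} s_i = s_{i+1} s_i s_{i+1}$ between adjacent transpositions $s_i = (i\ i{+}1)$, whose edges are labelled (in the sense of the canonical join representation) so that contracting the join-irreducible of the shorter arc collapses one edge and forces collapsing the edge labelled by the longer arc. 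Combined with Theorem~\ref{thm:arcDiagrams}(iii), this shows that the complement $\c{A}_n \ssm \c{I}_\equiv$ of contracted arcs is closed downward under~$\prec$, that is, $\c{I}_\equiv$ is an upper ideal.

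For the reverse inclusion $\le_f \,\subseteq\, \prec$ (only restrictions force), I would classify the polygonal $2$-faces of the weak order: the squares coming from commutations $s_i s_j = s_j s_i$ with $|i-j| \ge 2$, and the braid hexagons above. Using the explicit descriptions of~$\underline{\lambda}(\sigma,i)$ and~$\underline{\alpha}(i,i{+}1,\sigma)$ from Theorem~\ref{thm:joinMeetRepresentationsPermutations}, I would compute the arc labelling each edge around a face and check that opposite edges of a square carry the \emph{same} arc, so that squares create no forcing between distinct arcs, whereas the edges of a hexagon realise exactly the one-step restrictions. Since the forcing order is generated by the forcings read off the $2$-faces, its transitive closure is then precisely~$\prec$; consequently every upper ideal of~$\prec$ is a filter of~$\le_f$ and is realised as~$\c{I}_\equiv$ for the congruence defined, via Theorem~\ref{thm:arcDiagrams}(iii), by the rewriting rule using only arcs outside the ideal.

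The main obstacle is the hexagon bookkeeping underlying both combinatorial inclusions: one must track, through a single braid move $s_i s_{i+1} s_i = s_{i+1} s_i s_{i+1}$, how the underlying arc and in particular its interior set~$S$ of points passed above change, and verify that the induced forcing is exactly an endpoint one-step restriction and nothing more. The square case and the transitivity reductions are routine, but pinning down the arc labels around a braid hexagon from the formula for~$\underline{\lambda}(\sigma,i)$, and confirming that no spurious forcings occur, is the delicate computation at the heart of the statement.
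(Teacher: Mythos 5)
First, a point of reference: the paper does not prove this statement at all --- it is quoted directly from N.~Reading's work (Corollary~4.5 of the cited paper on noncrossing arc diagrams), and the section explicitly says its results are borrowed from there. So your proposal can only be measured against Reading's own argument, whose overall strategy it does reconstruct correctly: congruences of a finite lattice are determined by the join-irreducibles they contract; the realizable sets of contracted join-irreducibles are exactly the sets closed under congruence forcing (this needs only distributivity of the congruence lattice and the fact that a join-irreducible of a distributive lattice lying below a join lies below a joinand --- congruence-uniformity, which you invoke, is not needed for this step, only for antisymmetry of forcing); and the whole theorem then amounts to identifying congruence forcing, transported through $\underline{\delta}$, with the restriction order $\prec$ on arcs. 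Your inclusion $\prec \subseteq \le_f$ is essentially sound: opposite edges of a commutation square do carry the same arc, and each cover of $\prec$ (a one-step restriction of an endpoint) is realized inside a braid hexagon, provided you also check the existence of a permutation placing the three values $a<b<c$ in consecutive positions with the prescribed set $S$ to their left --- a routine verification you do not make explicit. Note also that a hexagon forces \emph{both} long arcs $(a,c,n,S)$ and $(a,c,n,S\cup\{b\})$ from either short arc, which is what makes the cover-by-cover reduction work.

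The genuine gap is in the converse inclusion $\le_f \subseteq \prec$. Your argument hinges on the sentence ``the forcing order is generated by the forcings read off the $2$-faces.'' That assertion is precisely Reading's polygonality theorem for the weak order: the weak order on $\fS_n$ is a polygonal lattice, and in a polygonal lattice congruence forcing is generated by forcing within polygons. This is a substantive theorem (established in Reading's earlier work on lattice congruences of the weak order), and nothing in your proposal proves it or even indicates why it should hold. Without it, you have no upper bound on forcing: a priori, contracting one join-irreducible could force contractions not visible in any single $2$-face, in which case some upper ideals of $\prec$ would fail to arise as $\c{I}_\equiv$ and the ``sufficiency'' half of the theorem would collapse. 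So your proposal establishes (modulo routine checks) that every $\c{I}_\equiv$ is an upper ideal of $\prec$, but the converse --- that every upper ideal of $\prec$ is realized by a congruence --- rests entirely on an unproven pillar, and that pillar, together with the hexagon bookkeeping you yourself defer, is the actual mathematical content of the statement.
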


Call \defn{arc ideal} any upper ideal~$\c{I}$ of the forcing order: $(a, d, n, S) \in \c{I}$ implies~${(b, c, n, S \cap {]b,c[}) \in \c{I}}$ for all~$a \le b < c \le d$ and~$S \subseteq {]a,d[}$.
We denote by~$\f{I}_n$ the set of arc ideals of~$\c{A}_n$.

\begin{example}
\label{exm:boundedCrossingCongruences}
The sets of all arcs~$\c{A}_n$, the set of upper arcs~$\c{A}_n^+ \eqdef \set{(a, b, n, {]a, b[})}{1 \le a < b \le n}$, the set of lower arcs~$\c{A}_n^- \eqdef \set{(a, b, n, \varnothing)}{1 \le a < b \le n}$, or the union~$\c{A}_n^+ \cup \c{A}_n^-$ are all arc ideals.
More generally, fix four functions~$\north, \south, \east, \west : [n] \to \N$ and choose~$k \in \N$.
For each~$a \in [n]$, draw $\north(a)$ upper vertical walls above~$a$, $\south(a)$ lower vertical walls below~$a$, and~$\min(\east(a), \west(a+1))$ horizontal walls from~$a$ and~$a+1$.
Then the set~$\c{A}_{\north, \south, \east, \west}^{< k}$ of arcs that cross at most~$k-1$ of all these walls is an arc ideal.
For certain choices of~$\north, \south, \east, \west$ and~$k$, the resulting arc ideals can correspond to:
\begin{itemize}
\item the weak order~(${\north = \south = \east = \west = 0}$ and~$k = 1$), 
\item the Tamari lattice~\cite{Tamari} ($\north = \east = \west = 0$ and ${\south = k = 1}$), 
\item the boolean lattice ($\north = \south = k = 1$ and $\east = \west = 0$), 
\item the lattice of diagonal rectangulations~\cite{LawReading}~($\north = \south = 0$ and $\east = \west = k = 1$), 
\item the permutree lattices~\cite{PilaudPons-permutrees}~($\north \le 1$, $\south \le 1$, $\east = \west = 0$ and ${k = 1}$), 
\item the lattice of sashes~\cite{Law} ($\north = 1$, $\south = 2$, $\east = \west = 0$ and~$k = 2$), 
\item the lattice of acyclic $k$-twists~\cite{Pilaud-brickAlgebra} ($\north = \east = \west = 0$, ${\south = 1}$ and~$k \ge 1$), 
\item the lattice of $k$-descent schemes~\cite{NovelliReutenauerThibon, Pilaud-brickAlgebra} ($\north = \south = 1$, $\east = \west = 0$ and~$k \ge 1$).
\end{itemize}
See \fref{fig:forcingOrderArcIdeals}\,(right).
%
\end{example}


\subsection{Explicit surjection}
\label{subsec:surjection}

Consider a lattice congruence~$\equiv$ of the weak order and let~$\c{I} = \c{I}_\equiv$.
According to Theorem~\ref{thm:arcDiagrams}, the congruence classes of~$\equiv$ are in bijection with noncrossing arc diagrams of~$\c{I}$.
Moreover, the map~$\underline{\eta}{}_\c{I}$ defined by~$\underline{\eta}{}_\c{I}(\sigma) \eqdef \underline{\delta}\big( \projDown(\sigma) \big)$ sends a permutation~$\sigma \in \fS_n$ to the noncrossing arc diagram of~$\c{I}$ corresponding to the congruence class of~$\sigma$.
For completeness, we now provide a direct explicit description of this surjection~$\underline{\eta}{}_\c{I}$.

\begin{figure}[b]
	\capstart
	\centerline{\includegraphics[scale=.85]{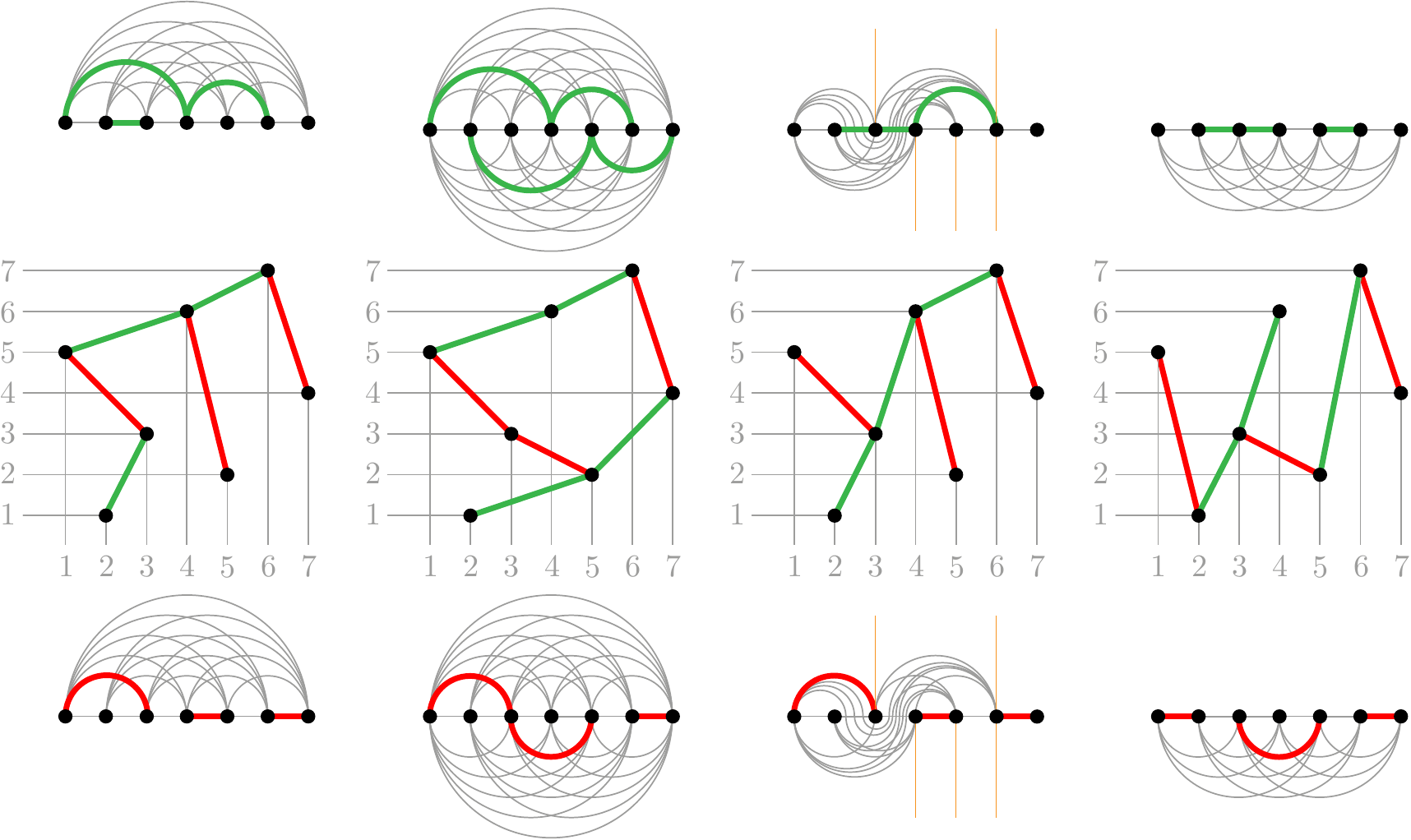}}
	\caption{The noncrossing arc diagrams~$\underline{\eta}_\c{I}(\sigma)$ (bottom) and~$\overline{\eta}_\c{I}(\sigma)$ (top) for the permutation~$\sigma = 2537146$ and different arc ideals (represented in light gray).}
	\label{fig:arcDiagramsQuotients}
\end{figure}

We still represent the permutation~$\sigma$ by its permutation table~$\set{(\sigma_i, i)}{i \in [n]}$.
We define
\[
\boxbslash(\sigma) \eqdef \set{(i, j)}{1 \le i < j \le n, \, \sigma_i > \sigma_j \text{ and } \sigma({]i,j[}) \cap {]\sigma_j, \sigma_i[} = \varnothing}.
\]
Intuitively, $\boxbslash(\sigma)$ are the pairs of positions such that the rectangle with bottom right corner~$(\sigma_i, i)$ and top left corner~$(\sigma_j, j)$ contains no other point~$(\sigma_k, k)$ of the permutation table of~$\sigma$.
Order~$\boxbslash(\sigma)$ by~$(i, j) \prec (k,\ell)$ if~$i \le k < \ell \le j$ and~$\sigma_k \ge \sigma_i > \sigma_j \ge \sigma_\ell$.
For~$(i, j) \in \boxbslash(\sigma)$, we define an arc~$\alpha(i, j, \sigma)$ by
\[
\underline{\alpha}(i, j, \sigma) \eqdef (\sigma_j, \sigma_i, n, \set{\sigma_k}{j < k \text{ and } \sigma_k \in {]\sigma_j, \sigma_i[} \,}).
\]
Note that it extends our previous definition of~$\underline{\alpha}(i, i+1, \sigma)$ in Section~\ref{subsec:canonicalRepresentations}.
Finally, define~$\boxbslash_\c{I}(\sigma)$ to be the subset of $\prec$-maximal elements in~$\set{(i, j) \in \boxbslash(\sigma)}{\underline{\alpha}(i, j, \sigma) \in \c{I}}$.

\begin{proposition}
For any~$\sigma \in \fS_n$, the set~$\underline{\eta}{}_\c{I}(\sigma) = \underline{\delta}\big( \projDown(\sigma) \big) = \bigset{\underline{\alpha}(i, j, \sigma)}{(i, j) \in \boxbslash_\c{I}(\sigma)}$ is the noncrossing arc diagram of~$\c{I}$ corresponding to the $\equiv$-congruence class of~$\sigma$.
\end{proposition}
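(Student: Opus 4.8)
The plan is to deduce the statement from N.~Reading's bijection of Theorem~\ref{thm:arcDiagrams}\,(ii). Since $\underline{\eta}_{\c I}(\sigma) = \underline{\delta}\big(\projDown(\sigma)\big)$ holds by the very definition of $\underline{\eta}_{\c I}$, and since $\underline{\delta}$ restricts to a bijection from the permutations minimal in their $\equiv$-class to the noncrossing arc diagrams contained in~$\c I$, it suffices to establish the combinatorial identity
\[
\underline{\delta}\big(\projDown(\sigma)\big) = \bigset{\underline{\alpha}(i, j, \sigma)}{(i, j) \in \boxbslash_\c{I}(\sigma)}.
\]
I would prove this by induction on the number of inversions $\inv(\sigma) - \inv\big(\projDown(\sigma)\big)$ that separate $\sigma$ from the bottom of its class, following the rewriting rule of Theorem~\ref{thm:arcDiagrams}\,(iii).

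For the base case, suppose $\sigma = \projDown(\sigma)$, so that $\sigma$ is minimal in its class and every descent arc $\underline{\alpha}(i, i+1, \sigma)$ lies in~$\c I$ by Theorem~\ref{thm:arcDiagrams}\,(i). Here the left-hand side is exactly $\underline{\delta}(\sigma) = \set{\underline{\alpha}(i, i+1, \sigma)}{\sigma_i > \sigma_{i+1}}$, so I must check that $\boxbslash_\c{I}(\sigma)$ is precisely the set of descents of~$\sigma$. Each descent $(i, i+1)$ lies in $\boxbslash(\sigma)$ (the rectangle is empty for want of intermediate positions), has its arc in~$\c I$, and is $\prec$-maximal since no pair can be nested strictly inside two consecutive positions. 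Conversely, given any $(i, j) \in \boxbslash(\sigma)$, the emptiness of the rectangle forces every intermediate value $\sigma_k$ with $i < k < j$ to satisfy $\sigma_k \ge \sigma_i$ or $\sigma_k \le \sigma_j$; since the sequence $\sigma_i, \dots, \sigma_j$ begins at the high value $\sigma_i$ and ends at the low value $\sigma_j$, it must contain a descent $(k, k+1)$ with $\sigma_k \ge \sigma_i > \sigma_j \ge \sigma_{k+1}$, whence $(i, j) \prec (k, k+1)$. As all descent arcs belong to~$\c I$, any non-descent box is strictly dominated inside $\set{(i,j) \in \boxbslash(\sigma)}{\underline{\alpha}(i,j,\sigma) \in \c I}$ and is discarded, leaving exactly the descents.

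For the inductive step, I choose a descent $i$ of $\sigma$ with $\underline{\alpha}(i, i+1, \sigma) \notin \c I$ and set $\sigma' = \sigma \cdot (i \; i+1)$, so that $\sigma' < \sigma$ and $\sigma' \equiv \sigma$, hence $\projDown(\sigma') = \projDown(\sigma)$. By induction the identity holds for~$\sigma'$, so it remains to prove that the selected families of arcs coincide, namely
\[
\bigset{\underline{\alpha}(i, j, \sigma)}{(i, j) \in \boxbslash_\c{I}(\sigma)} = \bigset{\underline{\alpha}(i', j', \sigma')}{(i', j') \in \boxbslash_\c{I}(\sigma')}.
\]
This is where the main difficulty lies. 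The transposition alters $\boxbslash(\sigma)$ only through the boxes having $i$ or $i+1$ as an endpoint: the box $(i, i+1)$ disappears, while boxes with one leg at $i$ or $i+1$ may be rerouted to the other position. The key leverage is that $\c I$ is an arc ideal (Theorem~\ref{thm:arcIdeals}), hence closed under restriction: since $\underline{\alpha}(i, i+1, \sigma) \notin \c I$, every arc admitting $\underline{\alpha}(i, i+1, \sigma)$ as a restriction also lies outside~$\c I$. I expect to show that a box is affected by the swap precisely when its arc restricts to $\underline{\alpha}(i, i+1, \sigma)$ on the value interval $[\sigma_{i+1}, \sigma_i]$, so that all affected boxes carry arcs outside~$\c I$ and are therefore never selected on either side; the $\prec$-maximal boxes with arc in~$\c I$, together with their arcs, are thus left untouched. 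Carrying out this bookkeeping --- in particular verifying that the rerouting of a leg from $i$ to $i+1$ does not create or destroy a $\prec$-maximal selected box, and that $\underline{\alpha}$ is unchanged on the surviving boxes --- is the technical heart of the argument and the step I expect to demand the most care.

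Finally, I would note an alternative to the inductive step that may streamline the bookkeeping: one can instead match, directly and bijectively, the descents of $\tau = \projDown(\sigma)$ with the elements of $\boxbslash_\c{I}(\sigma)$, by sending a descent $\tau_p > \tau_{p+1}$ to the pair of positions in $\sigma$ carrying the values $\tau_p$ and $\tau_{p+1}$, and checking that this pair lies in $\boxbslash(\sigma)$, is $\prec$-maximal with arc in~$\c I$, and reproduces the corresponding descent arc of $\tau$ through $\underline{\alpha}$. Either route reduces the proposition to the same combinatorial core, namely the stability of the $\prec$-maximal arcs of~$\c I$ under the sorting within a congruence class.
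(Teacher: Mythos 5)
Your strategy coincides with the paper's proof: verify the identity when $\sigma$ is minimal in its congruence class, then propagate it along the rewriting rule of Theorem~\ref{thm:arcDiagrams}\,(iii), with closure of the arc ideal under forcing as the key mechanism. Your base case is complete and is essentially the paper's own argument: descent boxes $(i,i+1)$ are automatically $\prec$-maximal, and any box $(i,j) \in \boxbslash(\sigma)$ is $\prec$-dominated by a descent box $(k,k+1)$ with $\sigma_{k+1} \le \sigma_j < \sigma_i \le \sigma_k$, whose arc lies in $\c{I}$ by minimality of~$\sigma$, so that $\boxbslash_\c{I}(\sigma)$ consists exactly of the descent pairs.

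The inductive step, however, is announced rather than proved: ``I expect to show that a box is affected by the swap precisely when its arc restricts to $\underline{\alpha}(i,i+1,\sigma)$\dots'' is a plan, not an argument, and since the base case is routine this deferred verification is the actual substance of the proposition. As written, the proposal therefore has a genuine gap. It is fillable by exactly the mechanism you name, and the paper does so in a few lines: comparing $\boxbslash(\sigma)$ with $\boxbslash(\sigma')$ for $\sigma' \eqdef \sigma \cdot (i\;i+1)$, one checks that apart from the disappearance of $(i,i+1)$ every box of $\boxbslash(\sigma)$ survives in $\boxbslash(\sigma')$ carrying the same arc (possibly with a leg relabeled from $i$ to $i+1$ or conversely), while any genuinely new box of $\boxbslash(\sigma')$ has its rectangle spanning the whole value interval $[\sigma_{i+1},\sigma_i]$, so that its arc forces $\underline{\alpha}(i,i+1,\sigma) \notin \c{I}$ and is discarded from $\boxbslash_\c{I}(\sigma')$ by the ideal property (Theorem~\ref{thm:arcIdeals}). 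This gives that the selected arc sets for $\sigma$ and $\sigma'$ coincide, which is precisely what your induction needs; your worry about relabeled legs is harmless because relabeled boxes keep their arcs and hence their status in the selection. Your alternative route (a direct bijection between the descents of $\projDown(\sigma)$ and $\boxbslash_\c{I}(\sigma)$) is not the paper's and would require manipulating $\projDown(\sigma)$ explicitly, which the rewriting induction is designed to avoid.
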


\begin{proof}
We first observe that if~$\sigma \in \fS_n$ is minimal in its congruence class, then~$\underline{\eta}{}_\c{I}(\sigma) = \underline{\delta}(\sigma)$ is the arc diagram of the congruence class of~$\sigma$.
Indeed, observe first that the characterization of Theorem~\ref{thm:arcDiagrams}\,(i) ensures that~$\boxbslash_\c{I}(\sigma)$ contains~$(i,i+1)$ for each descent~$i$ of~$\sigma$.
Conversely, consider~$(i, j) \in \boxbslash_\c{I}(\sigma)$.
Since~$(i, j) \in \boxbslash(\sigma)$, there exists~$i \le k < j$ such that~$\sigma_{k+1} \le \sigma_j < \sigma_i \le \sigma_k$.
Therefore, $(k, k+1) \in \boxbslash_\c{I}(\sigma)$ so that~$(i, j) = (k, k+1)$ by $\prec$-maximality in the definition of~$\boxbslash_\c{I}(\sigma)$.

Assume now that~$\sigma \in \fS_n$ is not minimal in its congruence class.
By~Theorem~\ref{thm:arcDiagrams}\,(i), $\sigma$ has a descent~$i$ such that~$\underline{\alpha}(i, i+1, \sigma) \notin \c{I}$.
Let~$\sigma' \eqdef \sigma \cdot (i \; i+1)$.
Note that~${\boxbslash(\sigma) \ssm \{(i,i+1)\} \subseteq \boxbslash(\sigma')}$.
Conversely, consider~$(k,\ell) \in \boxbslash(\sigma') \ssm \boxbslash(\sigma)$.
Then either~$k = i$ and~$\ell > j$, or~$k < i$ and~$\ell = j$.
In both cases, $\underline{\alpha}(i, j,\sigma')$ cannot belong to~$\c{I}$ since it forces~$\underline{\alpha}(i, i+1, \sigma)$ which does not belong to~$\c{I}$.
We conclude that~$\boxbslash_\c{I}(\sigma') = \boxbslash_\c{I}(\sigma)$ so that~$\underline{\eta}{}_\c{I}(\sigma') = \underline{\eta}{}_\c{I}(\sigma)$.
In other words, $\underline{\eta}{}_\c{I}(\sigma)$ is preserved by the rewriting rule of Theorem~\ref{thm:arcDiagrams}\,(iii).
This concludes the proof since this rewriting rule terminates on a permutation which is minimal in its congruence class.
\end{proof}

As in Section~\ref{subsec:canonicalRepresentations}, $\underline{\eta}{}_\c{I}(\sigma)$ is obtained by connecting the points~$(\sigma_i, i)$ and~$(\sigma_j, j)$ in the table of the permutation~$\sigma$ for all~$(i, j) \in \boxbslash_\c{I}(\sigma)$, and by moving all numbers of this table to the horizontal axis, allowing the segment connecting~$i$ and~$j$ to curve but not to pass through any number.
Note that to obtain all pairs~$(i, j) \in \boxbslash_\c{I}(\sigma)$, one can either draw all pairs~$(i, j) \in \boxbslash(\sigma)$ for which~$\underline{\alpha}(i, j,\sigma) \in \c{I}$ and conserve the $\prec$-maximal ones, or one can perform a direct insertion algorithm similar to that of~\cite{ChatelPilaud, PilaudPons-permutrees}.
Details are left to the reader.
We define dually the sets~$\boxslash(\sigma)$ and~$\boxslash_\c{I}(\sigma)$ and the maps~$\overline{\alpha}$ and~$\overline{\eta}_\c{I}$.
See \fref{fig:arcDiagramsQuotients} for an illustration of the maps~$\underline{\eta}{}_\c{I}$ and~$\overline{\eta}_\c{I}$.

\section{Hopf algebra structures on noncrossing arc diagrams}
\label{sec:generalRecipe}

In this section, we present general methods to construct Hopf algebra structures on decorated permutations and decorated arc diagrams.
Recall that a combinatorial Hopf algebra is a combinatorial vector space~$\c{A}$ endowed with an associative product~$\product : \c{A} \otimes \c{A} \to \c{A}$ and a coassociative coproduct~$\coproduct : \c{A} \to \c{A} \otimes \c{A}$ so that the diagram

\centerline{
\begin{tikzpicture}
  \matrix (m) [matrix of math nodes, row sep=2.5em, column sep=5em, minimum width=2em, ampersand replacement=\&]
  {
	\c{A} \otimes \c{A}  									\& \c{A}	\& \c{A} \otimes \c{A}									\\
	\c{A} \otimes \c{A} \otimes \c{A} \otimes \c{A}	\& 			\& \c{A} \otimes \c{A} \otimes \c{A} \otimes \c{A}	\\
  };
  \path[->]
    (m-1-1) edge node [above] {$\product$} (m-1-2)
            edge node [left] {$\coproduct \otimes \coproduct$} (m-2-1)
    (m-1-2) edge node [above] {$\coproduct$} (m-1-3)
    (m-2-1) edge node [below] {$I \otimes \swap \otimes I$} (m-2-3)
    (m-2-3) edge node [right] {$\product \otimes \product$} (m-1-3);
\end{tikzpicture}
}

\noindent
commutes, where~$\swap : \c{A} \otimes \c{A} \to \c{A} \otimes \c{A}$ is defined by~$\swap(x \otimes y) = y \otimes x$ and~$I$ is the identity.


\subsection{Hopf algebra on permutations}
\label{subsec:MalvenutoReutenauer}

Before constructing our decorated versions, we briefly recall C.~Malvenuto and C.~Reutenauer's Hopf algebra on permutations~\cite{MalvenutoReutenauer}.
We denote by~$\fS \eqdef \bigsqcup_{n \in \N} \fS_n$ the set of all permutations, of arbitrary size.

The \defn{standardization} of a word~$w \in \N^q$ with distinct entries is the permutation~$\stdf(w)$ of~$[q]$ whose entries are in the same relative order as the entries of~$w$.
For a permutation~$\rho \in \fS_p$ and a subset~$R = \{r_1 < \dots < r_q\} \subseteq [p]$, we define $\stdpos{\rho}{R}$ (resp.~$\stdval{\rho}{R}$) as the standardization of the word obtained by deleting form~$\rho$ the entries whose positions (resp.~values) are not in~$R$.
For two permutations~$\sigma \in \fS_m$ and~$\tau \in \fS_n$, define the \defn{shifted shuffle}~$\sigma \shiftedShuffle \tau$ and the \defn{convolution}~$\sigma \convolution \tau$~by
\begin{align*}
\sigma \shiftedShuffle \tau & \eqdef \set{\rho \in \fS_{m+n}}{\stdval{\rho}{[m]} = \sigma \text{ and } \stdval{\rho}{[m+n] \ssm [m]} = \tau} \\ 
\text{and}\qquad
\sigma \convolution \tau & \eqdef \set{\rho \in \fS_{m+n}}{\stdpos{\rho}{[m]} = \sigma \text{ and } \stdpos{\rho}{[m+n] \ssm [m]} = \tau}.
\end{align*}

For example,
\begin{align*}
{\red 12} \shiftedShuffle {\blue 231} & = \{ {\red 12}{\blue 453}, {\red 1}{\blue 4}{\red 2}{\blue 53}, {\red 1}{\blue 45}{\red 2}{\blue 3}, {\red 1}{\blue 453}{\red 2}, {\blue 4}{\red 12}{\blue 53}, {\blue 4}{\red 1}{\blue 5}{\red 2}{\blue 3}, {\blue 4}{\red 1}{\blue 53}{\red 2}, {\blue 45}{\red 12}{\blue 3}, {\blue 45}{\red 1}{\blue 3}{\red 2}, {\blue 453}{\red 12} \}, \\
\text{and}\qquad
{\red 12} \convolution {\blue 231} & = \{ {\red 12}{\blue 453}, {\red 13}{\blue 452}, {\red 14}{\blue 352}, {\red 15}{\blue 342}, {\red 23}{\blue 451}, {\red 24}{\blue 351}, {\red 25}{\blue 341}, {\red 34}{\blue 251}, {\red 35}{\blue 241}, {\red 45}{\blue 231} \}.
\end{align*}

\begin{theorem}[\cite{MalvenutoReutenauer}]
\label{thm:MalvenutoReutenauer}
The vector space~$\b{k}\fS$ with basis~$(\F_\sigma)_{\sigma \in \fS}$ endowed with the product and coproduct defined by
\[
\F_\sigma \product \F_\tau = \sum_{\rho \in \sigma \shiftedShuffle \tau} \F_\rho
\qquad\text{and}\qquad
\coproduct \F_\rho = \sum_{\rho \in \sigma \convolution \tau} \F_\sigma \otimes \F_\tau
\]
is a graded Hopf algebra.
\end{theorem}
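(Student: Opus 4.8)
The plan is to establish the three bialgebra axioms (associativity of~$\product$, coassociativity of~$\coproduct$, and their compatibility) directly on the basis~$(\F_\sigma)_{\sigma \in \fS}$, together with the grading and connectedness, and then to obtain the antipode for free. It is convenient first to unwind the coproduct: for~$\rho \in \fS_N$, the pairs~$(\sigma, \tau)$ with~$\rho \in \sigma \convolution \tau$ are exactly the cuts of~$\rho$ into a prefix and a suffix, so that
\[
\coproduct \F_\rho = \sum_{k=0}^{N} \F_{\stdf(\rho_1 \cdots \rho_k)} \otimes \F_{\stdf(\rho_{k+1} \cdots \rho_N)},
\]
which is the deconcatenation coproduct after standardization. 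Both operations are homogeneous for the grading by size (the shifted shuffle of sizes~$m$ and~$n$ lands in~$\fS_{m+n}$, and deconcatenation splits size~$N$ into~$k$ and~$N - k$), and since~$\fS_0$ is a single point the degree-$0$ component is one-dimensional, so the structure is connected graded.

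First I would prove associativity of~$\product$. Reading the shifted shuffle through the value-standardizations, I would check that both bracketings of~$\F_\sigma \product \F_\tau \product \F_\upsilon$ equal~$\sum_\rho \F_\rho$, the sum ranging over all~$\rho \in \fS_{m+n+p}$ whose value-standardizations on the three consecutive value-blocks~$\{1, \dots, m\}$, $\{m+1, \dots, m+n\}$ and~$\{m+n+1, \dots, m+n+p\}$ recover~$\sigma$, $\tau$ and~$\upsilon$ respectively. The point is simply that shifting values is additive, so this triple shuffle does not depend on the bracketing. Coassociativity of~$\coproduct$ is the dual statement: both~$(\coproduct \otimes I)\coproduct$ and~$(I \otimes \coproduct)\coproduct$ evaluate on~$\F_\rho$ to the sum over pairs of cut positions~$0 \le j \le k \le N$ of the three standardized factors~$\F_{\stdf(\rho_1 \cdots \rho_j)} \otimes \F_{\stdf(\rho_{j+1} \cdots \rho_k)} \otimes \F_{\stdf(\rho_{k+1} \cdots \rho_N)}$, which is manifestly symmetric in the two cuts.

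The heart of the argument, and the step I expect to be the main obstacle, is the compatibility axiom displayed above, namely that~$\coproduct(\F_\sigma \product \F_\tau)$ agrees with~$(\product \otimes \product)(I \otimes \swap \otimes I)(\coproduct \F_\sigma \otimes \coproduct \F_\tau)$. Writing~$\coproduct \F_\sigma = \sum \F_{\sigma'} \otimes \F_{\sigma''}$ and~$\coproduct \F_\tau = \sum \F_{\tau'} \otimes \F_{\tau''}$, the right-hand side is~$\sum \F_{\rho'} \otimes \F_{\rho''}$ over~$\rho' \in \sigma' \shiftedShuffle \tau'$ and~$\rho'' \in \sigma'' \shiftedShuffle \tau''$, whereas the left-hand side first shuffles~$\sigma$ and~$\tau$ and then cuts. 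The content is therefore a bijection: cutting a shuffle~$\rho \in \sigma \shiftedShuffle \tau$ at position~$k$ is the same datum as cutting~$\sigma$ and~$\tau$ separately and then shuffling the two prefixes into the length-$k$ prefix of~$\rho$ and the two suffixes into its suffix. I would construct it by recording, among the first~$k$ positions of~$\rho$, which carry~$\sigma$-values and which carry shifted~$\tau$-values; this determines the splitting points of~$\sigma$ and~$\tau$ and the two sub-shuffles, and the construction is visibly reversible. The only delicate point is bookkeeping the standardizations, for which I would isolate a short lemma stating that restricting a value-standardization to a prefix commutes with standardizing that prefix, \ie that~$\stdf$, $\stdval{\cdot}{\cdot}$ and~$\stdpos{\cdot}{\cdot}$ interact as expected under taking prefixes; once this is in hand the two multisets of tensors coincide term by term.

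Finally, after checking in passing that the unit~$\F_\emptyset$ and the counit (projection onto the degree-$0$ component) are compatible with~$\product$ and~$\coproduct$, the above shows that~$\b{k}\fS$ is a connected graded bialgebra. Such a bialgebra over a field automatically carries a unique antipode, defined recursively degree by degree (equivalently by Takeuchi's formula), so~$\b{k}\fS$ is a graded Hopf algebra, as claimed.
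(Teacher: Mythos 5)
Your proof is correct. There is, however, no internal proof to compare it against: the paper quotes this statement from \cite{MalvenutoReutenauer} without proof. Your argument is the standard one, and its crux --- the bijection between cuts of a shifted shuffle $\rho \in \sigma \shiftedShuffle \tau$ and the data of a cut of $\sigma$, a cut of $\tau$, a shuffle of the two prefixes, and a shuffle of the two suffixes --- is exactly the computation the paper does perform, in decorated form, in its proof of Theorem~\ref{thm:HopfAlgebraDecoratedPermutations} (where the undecorated case is invoked as a black box). In a full write-up the one point to spell out is why that bijection works: since $\rho$ is an interleaving of the word $\sigma$ with the word $\tau$ shifted by $m$, the $\sigma$-values among the first $k$ letters of $\rho$ are necessarily $\sigma_1, \dots, \sigma_i$ for some $i$ (a prefix of $\sigma$ as a word, not an arbitrary subset of its letters), and they are all smaller than the shifted $\tau$-values present; this is what ensures that standardizing the length-$k$ prefix of $\rho$ lands in $\stdf(\sigma_1 \cdots \sigma_i) \shiftedShuffle \stdf(\tau_1 \cdots \tau_j)$ and that the correspondence $(\rho, k) \leftrightarrow (i, j, \rho', \rho'')$ is reversible. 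Your sketch contains this observation implicitly, so the plan is sound as stated.
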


Recall that the product in~$\b{k}\fS$ behaves nicely with the weak order on~$\fS_n$.
For two permutations~$\sigma \in \fS_m$ and~$\tau \in \fS_n$, consider the permutations~$\sigma\backslash\tau$ and~$\tau\slash\sigma$ of~$\fS_{m+n}$ defined by
\[
\sigma\backslash\tau(i) = \begin{cases} \mu(i) & \text{ if } i \in [m] \\ m + \tau(i-m) & \text{ otherwise} \end{cases}
\qquad\text{and}\qquad
\tau\slash\sigma(i) = \begin{cases} m + \tau(i) & \text{ if } i \in [n] \\ \sigma(i-n) & \text{ otherwise.} \end{cases} 
\]
The shifted shuffle~$\sigma \shiftedShuffle \tau$ is then precisely given the weak order interval between~$\sigma\backslash\tau$ and~$\sigma\slash\tau$ in the weak order on~$\fS_{m+n}$.
This extends to product of weak order intervals as follows.

\begin{proposition}
\label{prop:productIntervalWeakOrder}
A product of weak order intervals in~$\b{k}\fS$ is a weak order interval: for any two weak order intervals~$[\mu, \nu] \subseteq \fS_m$ and~$[\lambda, \omega] \subseteq \fS_n$, we have
\[
\bigg( \sum_{\mu \le \sigma \le \nu} \F_\sigma \bigg) \product \bigg( \sum_{\lambda \le \tau \le \omega} \F_\tau \bigg) = \sum_{\mu\backslash\lambda \le \rho \le \omega\slash\nu} \F_\rho.
\]
\end{proposition}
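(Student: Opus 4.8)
The plan is to expand the left-hand side by bilinearity together with the definition of the product, and then to reinterpret the resulting coefficients through inversion sets. Writing the left-hand side as $\sum_{\mu \le \sigma \le \nu} \sum_{\lambda \le \tau \le \omega} \sum_{\rho \in \sigma \shiftedShuffle \tau} \F_\rho$, the first step is to observe that the coefficient of each $\F_\rho$ is $0$ or $1$. Indeed, if $\rho \in \sigma \shiftedShuffle \tau$ then necessarily $\sigma = \stdval{\rho}{[m]}$ and $\tau = \stdval{\rho}{[m+n] \ssm [m]}$, so the pair $(\sigma, \tau)$ is uniquely determined by $\rho$; the shifted shuffles of distinct pairs are therefore disjoint and partition $\fS_{m+n}$. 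Hence the left-hand side equals the sum of $\F_\rho$ over those $\rho \in \fS_{m+n}$ whose ``destandardized'' pair $(\sigma, \tau)$ satisfies $\mu \le \sigma \le \nu$ and $\lambda \le \tau \le \omega$. It then remains to prove that this set of permutations is exactly the weak order interval $[\mu \backslash \lambda, \omega \slash \nu]$.

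For this I would decompose the inversion set of any $\rho \in \fS_{m+n}$ according to the types of the inverted values: a pair of inverted values can have both entries in $[m]$, both entries in $[m+n] \ssm [m]$, or one in each, which I call a \emph{mixed} inversion. The small-small inversions of $\rho$ are precisely the inversions of $\sigma = \stdval{\rho}{[m]}$, and the large-large inversions are precisely the (shifted) inversions of $\tau = \stdval{\rho}{[m+n] \ssm [m]}$. Computing the two endpoints, $\mu \backslash \lambda = \mu_1 \cdots \mu_m \, (\lambda_1 + m) \cdots (\lambda_n + m)$ has inversion set $\inv(\mu)$ on the small part, the shift of $\inv(\lambda)$ on the large part, and \emph{no} mixed inversion; dually $\omega \slash \nu = (\omega_1 + m) \cdots (\omega_n + m) \, \nu_1 \cdots \nu_m$ has inversion set $\inv(\nu)$ on the small part, the shift of $\inv(\omega)$ on the large part, and \emph{all} mixed inversions.

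Since every pair of distinct values is of exactly one of the three types, the containments $\inv(\mu \backslash \lambda) \subseteq \inv(\rho) \subseteq \inv(\omega \slash \nu)$ can be checked type by type. The mixed part imposes no condition, because the lower endpoint has no mixed inversion and the upper endpoint has all of them; the small-small part gives $\inv(\mu) \subseteq \inv(\sigma) \subseteq \inv(\nu)$, that is $\mu \le \sigma \le \nu$; and the large-large part gives $\lambda \le \tau \le \omega$. This is exactly the condition isolated in the first step, which identifies the coefficient support with the interval $[\mu \backslash \lambda, \omega \slash \nu]$ and finishes the argument. I expect the only delicate point to be this type-by-type reduction: it works precisely because the mixed parts of the two endpoints are the empty one and the full one, so that the mixed inversions of $\rho$ are left completely free, and this is exactly what forces the product of two intervals to be again a single interval rather than a more complicated subset.
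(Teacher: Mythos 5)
Your proof is correct. Note that the paper itself offers no proof of this proposition: it is stated as recalled background on the Malvenuto--Reutenauer algebra (the sentence just before it already recalls the special case that a single shifted shuffle $\sigma \shiftedShuffle \tau$ is the interval $[\sigma\backslash\tau, \sigma\slash\tau]$), so there is no internal argument to compare against; your write-up supplies the standard proof of this classical fact. Both of your steps are sound: the multiplicity-freeness follows exactly as you say, since $\rho \in \sigma \shiftedShuffle \tau$ forces $\sigma = \stdval{\rho}{[m]}$ and $\tau = \stdval{\rho}{[m+n] \ssm [m]}$, so the shifted shuffles of distinct pairs partition $\fS_{m+n}$; and the identification of the support with the interval $[\mu\backslash\lambda, \omega\slash\nu]$ is correctly reduced, via the paper's definition of the weak order by inclusion of inversion sets (inversions being recorded as pairs of values), to the three-way split into small-small, large-large, and mixed pairs. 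Your closing remark pinpoints the right mechanism: the lower endpoint $\mu\backslash\lambda$ has no mixed inversions while the upper endpoint $\omega\slash\nu$ has all of them, so the mixed coordinates are unconstrained and the constraint factors exactly into $\mu \le \sigma \le \nu$ and $\lambda \le \tau \le \omega$, which is what makes the product of two intervals again an interval.
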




\subsection{Decorated permutations}

For our purposes, we need extensions of C.~Malvenuto and C.~Reutenauer's Hopf algebra on permutations.
For example, we needed the signed or decorated permutations of~\cite{NovelliThibon-coloredHopfAlgebras} to construct the Cambrian and permutree Hopf algebras~\cite{ChatelPilaud, PilaudPons-permutrees}.

We now define Hopf algebras on permutations decorated with potentially more complicated structures.

\begin{definition}
\label{def:decorationSet}
A \defn{decoration set} is a graded set~$\f{X} \eqdef \bigsqcup_{n \ge 0} \f{X}_n$ endowed with
\begin{itemize}
\item a \defn{concatenation}~$\concatf : \f{X}_m \times \f{X}_n \longrightarrow \f{X}_{m+n}$ for all~$m, n \in \N$,
\item a \defn{selection}~$\selectf : \f{X}_m \times \binom{[m]}{k} \longrightarrow \f{X}_k$ for all~$m, k \in \N$,
\end{itemize}
such that
\begin{enumerate}[(i)]
\item $\concat{\c{X}}{\concat{\c{Y}}{\c{Z}}} = \concat{\concat{\c{X}}{\c{Y}}}{\c{Z}}$ for any elements~$\c{X}, \c{Y}, \c{Z} \in \f{X}$,
\item $\select{\select{\c{X}}{R}}{S} = \select{\c{X}}{\set{r_s}{s \in S}}$ for any element~$\c{X} \in \f{X}_p$, and any subsets $R = \{r_1, \dots, r_q\} \subseteq [p]$ and~${S \subseteq [q]}$,
\item ${\concat{\select{\c{X}}{R}}{\select{\c{Y}}{S}} = \select{\concat{\c{X}}{\c{Y}}}{R \cup S^{\rightarrow m}}}$ for any elements~${\c{X} \in \f{X}_m}$ and~${\c{Y} \in \f{X}_n}$, and any subsets~$R \subseteq [m]$ and~$S \subseteq [n]$, where~$S^{\rightarrow m} \eqdef \set{s+m}{s \in S}$.
\end{enumerate}
\end{definition}

\begin{example}
\label{exm:decorationSet}
A typical decoration set is the set of words~$\c{A}^*$ on a finite alphabet~$\c{A}$, graded by their length, with the classical concatenation of words, and the selection defined by subwords: $\concat{u_1 \dots u_m}{v_1 \dots v_n} = u_1 \dots u_mv_1 \dots v_n$ and $\select{w_1 \dots w_p}{\{r_1, \dots, r_q\}} = w_{r_1} \dots w_{r_q}$.
Among many other examples, let us also mention the set of labeled graphs, graded by their number of vertices, with the concatenation defined as the shifted union, and the selection defined by standardized induced subgraphs.
Further examples will appear in Section~\ref{sec:examples}.
\end{example}

For~$n \ge 0$, we denote by~$\eP_n$ the set of \defn{$\f{X}$-decorated permutations} of size~$n$, \ie of pairs~$(\sigma, \c{X})$ with~$\sigma \in \fS_n$ and~$\c{X} \in \f{X}_n$.
We consider the graded set~$\eP \eqdef \bigsqcup_{n \ge 0} \eP_n$ and the graded vector space~$\b{k}\eP \eqdef \bigoplus_{n\ge 0} \b{k}\eP_n$, where~$\b{k}\eP_n$ is a vector space with basis~$(\F_{(\sigma, \c{X})})_{(\sigma, \c{X}) \in \eP_n}$ indexed by \mbox{$\f{X}$-decorated} permutations of size~$n$.
For two decorated permutations~$(\sigma, \c{X})$ and~$(\tau, \c{Y})$, we define the \defn{product}~$\F_{(\sigma, \c{X})} \product \F_{(\tau, \c{Y})}$ by
\[
\F_{(\sigma, \c{X})} \product \F_{(\tau, \c{Y})} \eqdef \sum_{\rho \in \sigma \shiftedShuffle \tau} \F_{( \rho, \concat{\c{X}}{\c{Y}} )}.
\]

\begin{proposition}
\label{prop:algebraDecoratedPermutations}
The product~$\product$ defines an associative graded algebra structure on~$\b{k}\eP$.
\end{proposition}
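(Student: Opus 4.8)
The plan is to show that the product decouples completely into two independent operations—the shifted shuffle acting on the permutation coordinate and the concatenation acting on the decoration coordinate—so that associativity reduces to the separate associativity of each of these two pieces. Gradedness is immediate: for $(\sigma, \c{X}) \in \eP_m$ and $(\tau, \c{Y}) \in \eP_n$, every summand $\F_{(\rho, \concat{\c{X}}{\c{Y}})}$ has $\rho \in \sigma \shiftedShuffle \tau \subseteq \fS_{m+n}$ and $\concat{\c{X}}{\c{Y}} \in \f{X}_{m+n}$, so $\product$ sends $\b{k}\eP_m \otimes \b{k}\eP_n$ into $\b{k}\eP_{m+n}$.

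For associativity I would fix $(\sigma, \c{X}) \in \eP_m$, $(\tau, \c{Y}) \in \eP_n$ and $(\upsilon, \c{Z}) \in \eP_p$ and expand the two triple products on the basis. Expanding from the left yields
\[
(\F_{(\sigma, \c{X})} \product \F_{(\tau, \c{Y})}) \product \F_{(\upsilon, \c{Z})} = \sum_{\rho \in \sigma \shiftedShuffle \tau} \; \sum_{\pi \in \rho \shiftedShuffle \upsilon} \F_{(\pi, \, \concat{\concat{\c{X}}{\c{Y}}}{\c{Z}})},
\]
while expanding from the right yields
\[
\F_{(\sigma, \c{X})} \product (\F_{(\tau, \c{Y})} \product \F_{(\upsilon, \c{Z})}) = \sum_{\rho' \in \tau \shiftedShuffle \upsilon} \; \sum_{\pi \in \sigma \shiftedShuffle \rho'} \F_{(\pi, \, \concat{\c{X}}{\concat{\c{Y}}{\c{Z}}})}.
\]
The key structural feature, read off directly from the definition of $\product$, is that within a single product every summand carries the \emph{same} decoration, independent of the shuffled permutation; consequently, after two products the decoration is a single fixed element of $\f{X}_{m+n+p}$ that factors out of the whole double sum.

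It then remains to match the two sides coordinate by coordinate. On the decoration coordinate, the two constant decorations $\concat{\concat{\c{X}}{\c{Y}}}{\c{Z}}$ and $\concat{\c{X}}{\concat{\c{Y}}{\c{Z}}}$ are equal by axiom~(i) of Definition~\ref{def:decorationSet} (associativity of the concatenation). On the permutation coordinate, the families of indices are exactly the two iterated shifted shuffles $(\sigma \shiftedShuffle \tau) \shiftedShuffle \upsilon$ and $\sigma \shiftedShuffle (\tau \shiftedShuffle \upsilon)$, which coincide because the shifted shuffle is associative—this is precisely the associativity of the product in $\MR$ (Theorem~\ref{thm:MalvenutoReutenauer}). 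Concretely, in either family $\pi$ occurs, and with multiplicity one, exactly when its three value-standardizations $\stdval{\pi}{[m]}$, $\stdval{\pi}{[m+n] \ssm [m]}$ and $\stdval{\pi}{[m+n+p] \ssm [m+n]}$ equal $\sigma$, $\tau$ and $\upsilon$ respectively. Combining the matching of the two coordinates gives the desired identity.

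I do not expect a genuine obstacle here: the proof is essentially routine once the two coordinates are disentangled. The only point requiring care is the bookkeeping that certifies the decoration is constant along each shuffle, hence may be pulled out of the sum; after that observation, associativity reduces cleanly to the two facts already at hand, namely the associativity of $\MR$ and axiom~(i) of the decoration set.
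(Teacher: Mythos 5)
Your proof is correct and takes essentially the same route as the paper's: gradedness is immediate from the definitions, and associativity reduces to the associativity of the concatenation (Definition~\ref{def:decorationSet}\,(i)) combined with the associativity of the shifted shuffle (Theorem~\ref{thm:MalvenutoReutenauer}). The explicit bookkeeping you carry out---expanding the double sums and factoring the constant decoration out of each product---is simply a spelled-out version of what the paper's two-line proof leaves implicit.
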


\begin{proof}
If~$(\sigma, \c{X}) \in \eP_m$ and~$(\tau, \c{Y}) \in \eP_n$, we have~$(\rho, \concat{\c{X}}{\c{Y}}) \in \eP_{m+n}$ for any~$\rho \in \sigma \shiftedShuffle \tau$, so that~$\product$ is a graded product.
It is associative since both the concatenation (by Definition~\ref{def:decorationSet}\,(i)) and the shifted shuffle product (by Theorem~\ref{thm:MalvenutoReutenauer}) are associative.
\end{proof}

Note that an analogue of Proposition~\ref{prop:productIntervalWeakOrder} clearly holds for decorated permutations.

\medskip
We define the \defn{standardization} of a decorated permutation~$(\rho, \c{Z}) \in \eP_p$ at a subset~$R \subseteq [p]$ as
\[
\std{(\rho, \c{Z})}{R} \eqdef \big( \stdpos{\rho}{R}, \select{\c{Z}}{\rho^{-1}(R)} \big),
\]
where~$\stdpos{\rho}{R}$ is the position standardization on permutations and~$\select{\c{Z}}{\rho^{-1}(R)}$ is the selection on~$\f{X}$.
For a decorated permutation~$(\rho, \c{Z}) \in \eP_p$, we define the \defn{coproduct}~$\coproduct\F_{(\rho, \c{Z})}$ by
\[
\coproduct\F_{(\rho, \c{Z})} \eqdef \sum_{k = 0}^p \F_{\std{(\rho, \c{Z})}{[k]}} \otimes \F_{\std{(\rho, \c{Z})}{[p] \ssm [k]}}.
\]

\begin{proposition}
\label{prop:coalgebraDecoratedPermutations}
The coproduct~$\coproduct$ defines a coassociative graded coalgebra structure on~$\b{k}\eP$.
\end{proposition}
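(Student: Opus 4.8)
The plan is to reduce coassociativity to the two structural ingredients already available: coassociativity of the Malvenuto--Reutenauer coproduct at the level of permutations, and the selection-composition axiom at the level of decorations. Gradedness is immediate and I would dispatch it first: for a subset $R \subseteq [p]$ with $|R| = k$, the permutation $\stdpos{\rho}{R}$ lies in $\fS_k$, and since $\rho$ is a bijection we have $|\rho^{-1}(R)| = k$, so $\select{\c{Z}}{\rho^{-1}(R)} \in \f{X}_k$ by the typing of $\selectf$ in Definition~\ref{def:decorationSet}. Hence $\std{(\rho, \c{Z})}{[k]} \in \eP_k$ and $\std{(\rho, \c{Z})}{[p] \ssm [k]} \in \eP_{p-k}$, which shows that $\coproduct$ is a graded coproduct.

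The core of the argument is a single composition identity for the decorated standardization: for $R = \{r_1 < \dots < r_q\} \subseteq [p]$ and $S \subseteq [q]$, writing $R_S \eqdef \set{r_s}{s \in S}$, one should establish
\[
\std{\std{(\rho, \c{Z})}{R}}{S} = \std{(\rho, \c{Z})}{R_S}.
\]
Its permutation component, $\stdpos{\stdpos{\rho}{R}}{S} = \stdpos{\rho}{R_S}$, is the standard nesting property of position-standardization that already underlies the coassociativity of the Malvenuto--Reutenauer coproduct (Theorem~\ref{thm:MalvenutoReutenauer}). Its decoration component is exactly an instance of Definition~\ref{def:decorationSet}\,(ii), applied to $\c{Z}$ with the outer index set pulled back through $\rho$ and through the intermediate permutation $\stdpos{\rho}{R}$. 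Granting this identity, I would expand $(\coproduct \otimes I)\coproduct \F_{(\rho, \c{Z})}$ and $(I \otimes \coproduct)\coproduct \F_{(\rho, \c{Z})}$: each is a sum over nested intervals $0 \le a \le b \le p$, and after rewriting every doubly-standardized summand via the identity above, both collapse to
\[
\sum_{0 \le a \le b \le p} \F_{\std{(\rho, \c{Z})}{[a]}} \otimes \F_{\std{(\rho, \c{Z})}{]a,b]}} \otimes \F_{\std{(\rho, \c{Z})}{]b,p]}},
\]
which yields coassociativity.

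The main obstacle is the decoration bookkeeping hidden in the composition identity. The permutation side is classical, but on the decoration side one must check that the index set produced by the two successive selections is \emph{precisely} $\rho^{-1}(R_S)$, so that Definition~\ref{def:decorationSet}\,(ii) applies verbatim. Concretely, the inner selection is taken at $\rho^{-1}(R)$ while the outer selection is taken at $(\stdpos{\rho}{R})^{-1}(S)$; axiom~(ii) then produces a selection at $\set{(\rho^{-1}(R))_{s'}}{s' \in (\stdpos{\rho}{R})^{-1}(S)}$, and the careful point is to verify that this set equals $\rho^{-1}(R_S)$, i.e.\ that taking $\rho$-preimages commutes with position-standardization in the required sense. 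This matching of index sets is the one genuinely delicate verification; once it is in place, the rest of the proof is purely formal.
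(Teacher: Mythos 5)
Your architecture is exactly the paper's: dispatch gradedness, then reduce coassociativity to a composition identity $\std{\std{(\rho,\c{Z})}{R}}{S} = \std{(\rho,\c{Z})}{R_S}$ whose permutation part is classical and whose decoration part is Definition~\ref{def:decorationSet}\,(ii); the paper's proof is precisely this, stated in two lines, with both iterated coproducts collapsing to the same triple sum. The problem is that the one step you explicitly defer --- the matching of index sets --- is where all the content lies, and in the form you set it up it is \emph{false}. Take $\rho = 231$, $R = [2]$, $S = [1]$, so $R_S = \{1\}$. Then $\rho^{-1}(R) = \{1,3\}$, $\stdpos{\rho}{R} = 12$ and $(\stdpos{\rho}{R})^{-1}(S) = \{1\}$, so your composed selection set is $\set{(\rho^{-1}(R))_{s'}}{s' \in (\stdpos{\rho}{R})^{-1}(S)} = \{1\}$, whereas $\rho^{-1}(R_S) = \{3\}$. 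So taking $\rho$-preimages does \emph{not} commute with position-standardization in the required sense: $\rho^{-1}([k])$ is the set of positions of the small \emph{values}, which has nothing to do with the prefix positions $[k]$ being standardized. Indeed, with this literal reading the proposition itself fails: for the word decoration set $\f{X} = \{a,b\}^*$ and $\c{Z} = aab$, the $(1,1,1)$-graded pieces of the two iterated coproducts of $\F_{(231,\,\c{Z})}$ come out as $\F_{(1,b)} \otimes \F_{(1,a)} \otimes \F_{(1,a)}$ and $\F_{(1,a)} \otimes \F_{(1,b)} \otimes \F_{(1,a)}$, which differ.

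The resolution is that the decoration of a decorated permutation is attached to the \emph{values}, not the positions: this is forced by the product, where the shifted shuffle distributes the values $[m]$ and $[m+n] \ssm [m]$ among arbitrary positions while the decorations are simply concatenated. Consequently, in the standardization $\std{(\rho,\c{Z})}{R}$ the selection must be taken at the set of values occupying the positions of $R$, namely $\rho(R)$, and the occurrence of $\rho^{-1}(R)$ in the printed definition has to be read as $\rho(R)$ (otherwise, as the counterexample shows, the statement is wrong). With that reading your ``delicate verification'' becomes a tautology of standardization: writing $\rho(R) = \{v_1 < \dots < v_q\}$ and $\sigma = \stdpos{\rho}{R}$, one has $\rho_{r_s} = v_{\sigma(s)}$ by definition of $\stdf$, so the set produced by Definition~\ref{def:decorationSet}\,(ii) is $\set{v_{s'}}{s' \in \sigma(S)} = \set{\rho_{r_s}}{s \in S} = \rho(R_S)$, your composition identity holds, and both iterated coproducts collapse to the triple sum, as in the paper. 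So the plan is the right one, but the proof as proposed is incomplete at its declared crux, and the deferred claim would fail as stated: the missing idea is that the selection set must track values rather than positions.
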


\begin{proof}
If~$(\sigma, \c{Z}) \in \eP_p$ and~$R \subseteq [p]$, we have~$\std{(\rho, \c{Z})}{R} \in \eP_{|R|}$ so that~$\coproduct$ is a graded coproduct. 
Moreover, using Definition~\ref{def:decorationSet}\,(ii), we obtain that for a decorated permutation~$(\rho, \c{Z}) \in \eP_p$, both~$(\Id \otimes \coproduct)(\coproduct\F_{(\rho,\c{Z})})$ and~$(\coproduct \otimes \Id)(\coproduct\F_{(\rho,\c{Z})})$ equal to
\[
\sum_{0 \le k \le \ell \le p} \F_{\std{(\rho, \c{Z})}{[k]}} \otimes \F_{\std{(\rho, \c{Z})}{[\ell] \ssm [k]}} \otimes \F_{\std{(\rho, \c{Z})}{[p] \ssm [\ell]}}. \qedhere
\]
\end{proof}

\begin{theorem}
\label{thm:HopfAlgebraDecoratedPermutations}
The product~$\product$ and coproduct~$\coproduct$ endow the family of decorated permutations with a graded Hopf algebra structure.
\end{theorem}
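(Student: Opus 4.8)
The plan is to note that Propositions~\ref{prop:algebraDecoratedPermutations} and~\ref{prop:coalgebraDecoratedPermutations} already furnish the associative graded algebra and the coassociative graded coalgebra structures, so that the only point left to establish is the bialgebra compatibility encoded by the commutative diagram of Section~\ref{sec:generalRecipe}, namely that~$\coproduct$ is a morphism of algebras (equivalently that~$\product$ is a morphism of coalgebras). Once this is done, the Hopf structure comes for free: $\b{k}\eP$ is graded and connected, its degree-zero part being spanned by the unique empty decorated permutation, and a graded connected bialgebra always admits a (unique) antipode given by the standard recursive formula, so no separate construction is required.

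To check the compatibility, I would fix~$(\sigma, \c{X}) \in \eP_m$ and~$(\tau, \c{Y}) \in \eP_n$ and expand the two sides of
\[
\coproduct\big( \F_{(\sigma, \c{X})} \product \F_{(\tau, \c{Y})} \big) = (\product \otimes \product)(\Id \otimes \swap \otimes \Id)\big( \coproduct\F_{(\sigma, \c{X})} \otimes \coproduct\F_{(\tau, \c{Y})} \big).
\]
On the left, applying the product and then the coproduct yields a sum over pairs~$(\rho, k)$ with~$\rho \in \sigma \shiftedShuffle \tau$ and~$0 \le k \le m+n$ of the terms~$\F_{\std{(\rho, \concat{\c{X}}{\c{Y}})}{[k]}} \otimes \F_{\std{(\rho, \concat{\c{X}}{\c{Y}})}{[m+n] \ssm [k]}}$. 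On the right, applying the two coproducts, the middle swap, and the two products yields a sum over data~$(i, j, \rho', \rho'')$ with~$0 \le i \le m$ and~$0 \le j \le n$, where~$\rho'$ ranges over the shifted shuffle of~$\stdpos{\sigma}{[i]}$ and~$\stdpos{\tau}{[j]}$ and~$\rho''$ over the shifted shuffle of the complementary halves~$\stdpos{\sigma}{[m] \ssm [i]}$ and~$\stdpos{\tau}{[n] \ssm [j]}$, each factor carrying a concatenation of selections of~$\c{X}$ and~$\c{Y}$.

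Forgetting the decorations, this is exactly the Malvenuto--Reutenauer compatibility, which holds by Theorem~\ref{thm:MalvenutoReutenauer}; it provides a bijection between the indexing data of the two sides, under which~$k = i + j$ and the two underlying tensor factors of the left side match~$\rho'$ and~$\rho''$ on the right. It then remains to match the decorations term by term. This is where Definition~\ref{def:decorationSet}\,(iii) is used: the left tensor factor on the left-hand side carries a decoration of the form~$\select{(\concat{\c{X}}{\c{Y}})}{T}$, while the corresponding factor on the right-hand side carries~$\concat{\select{\c{X}}{R}}{\select{\c{Y}}{S}}$ for the value sets~$R \subseteq [m]$ and~$S \subseteq [n]$ surviving from~$\c{X}$ and~$\c{Y}$ respectively; identifying~$T = R \cup S^{\rightarrow m}$ turns Definition~\ref{def:decorationSet}\,(iii) into precisely the desired equality, and the right tensor factors are handled symmetrically.

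The main obstacle is purely a matter of bookkeeping rather than of ideas: one must keep straight that the shifted shuffle acts on \emph{values}, so that the concatenation~$\concat{\c{X}}{\c{Y}}$ is aligned with the value blocks~$[m]$ and~$[m+n] \ssm [m]$, whereas the coproduct cuts along \emph{positions}, so that the selection indices are the value sets carried into each factor; and one must verify that these index sets split exactly as~$R \cup S^{\rightarrow m}$ so that axiom~(iii) applies cleanly. Together with the associativity, coassociativity, gradedness, and connectedness already in hand, this establishes the graded Hopf algebra structure.
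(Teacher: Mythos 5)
Your proposal is correct and follows essentially the same route as the paper: the paper's proof likewise takes the associative and coassociative structures from Propositions~\ref{prop:algebraDecoratedPermutations} and~\ref{prop:coalgebraDecoratedPermutations} and verifies the compatibility $\coproduct\big(\F_{(\sigma,\c{X})} \product \F_{(\tau,\c{Y})}\big) = \coproduct\F_{(\sigma,\c{X})} \product \coproduct\F_{(\tau,\c{Y})}$ by the same term-by-term expansion, matching the underlying permutations via the Malvenuto--Reutenauer compatibility of Theorem~\ref{thm:MalvenutoReutenauer} and the decorations via Definition~\ref{def:decorationSet}\,(iii), with exactly the identification of selection index sets as $R \cup S^{\rightarrow m}$ (value sets carried into each factor) that you describe. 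The only cosmetic difference is that you spell out the existence of the antipode from gradedness and connectedness, a point the paper leaves implicit.
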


\begin{proof}
This follows from the same property for the product and coproduct on permutations and by Definition~\ref{def:decorationSet}\,(iii). Indeed
\begin{align*}
\coproduct\big( \F_{(\sigma, \c{X})} \product \F_{(\tau, \c{Y})} \big)
& = \sum_{k = 0}^{m+n} \sum_{\rho \in \sigma \shiftedShuffle \tau} \F_{\std{(\rho, \concat{\c{X}}{\c{Y}})}{[k]}} \otimes \F_{\std{(\rho, \concat{\c{X}}{\c{Y}})}{[m+n] \ssm [k]}} \\
& =  \sum_{p = 0}^m \sum_{q = 0}^n \begin{array}{@{}l} \Big( \sum\limits_\mu \F_{( \mu, \concat{\std{\c{X}}{\sigma^{-1}([p])}}{\std{\c{Y}}{\tau^{-1}([q])}} )} \Big) \\[-.1cm] \qquad \otimes \Big( \sum\limits_\nu \F_{( \nu, \concat{\std{\c{X}}{\sigma^{-1}([m] \ssm [p])}}{\std{\c{Y}}{\tau^{-1}([n] \ssm [q])}} )} \Big) \end{array} \\[-.1cm]
& = \Big( \sum_{p = 0}^m \F_{\std{(\sigma, \c{X})}{[p]}} \otimes \F_{\std{(\sigma, \c{X})}{[m] \ssm [p]}} \Big) \product \Big( \sum_{q = 0}^n \F_{\std{(\tau, \c{X})}{[q]}} \otimes \F_{\std{(\tau, \c{X})}{[n] \ssm [q]}} \Big) \\
& = \coproduct\F_{(\sigma, \c{X})} \product \coproduct\F_{(\tau, \c{Y})}
\end{align*}
where~$\mu$ ranges over~$\stdpos{\sigma}{[p]} \shiftedShuffle \stdpos{\tau}{[q]}$ while~$\nu$ ranges over~$\stdpos{\sigma}{[m] \ssm [p]} \shiftedShuffle \stdpos{\tau}{[n] \ssm [q]}$ in the second line, and the swap is understood in the last two lines.
\end{proof}

\begin{example}
When~$\f{X}$ is the set of words~$\c{A}^*$ on a finite alphabet~$\c{A}$ (with the classical concatenation of words and the selection defined by subwords, as in Example~\ref{exm:decorationSet}), the Hopf algebra of decorated permutations was studied in detail by J.-C.~Novelli and J.-Y.~Thibon in~\cite{NovelliThibon-coloredHopfAlgebras}.
Further relevant examples will appear in Section~\ref{sec:examples}.
\end{example}


\subsection{Decorated noncrossing arc diagrams}

We now use our Hopf algebra on decorated permutations to construct Hopf algebras on decorated noncrossing arc diagrams.
As in the previous section, we consider a decoration set~$(\f{X}, \concatf, \selectf)$ and the corresponding Hopf algebra~$(\b{k}\eP, \,\product\,, \coproduct)$ on $\f{X}$-decorated permutations.
Recall from Section~\ref{subsec:arcIdeals} that~$\f{I}_n$ denotes the set of arc ideals of~$\c{A}_n$.

For an arc~$\alpha = (a, b, m, S)$ and~$n \in \N$, we define the \defn{augmented arc}~$\alpha^{+n} \eqdef (a, b, m+n, S)$ and the \defn{shifted arc}~$\alpha^{\rightarrow n} \eqdef (a+n, b+n, m+n, \set{s+n}{s \in S})$.
Graphically, $\alpha^{+n}$ is obtained from~$\alpha$ by adding $n$ points to its right, and~$\alpha^{\rightarrow n}$ is obtained from~$\alpha$ by adding~$n$ points to its left.
For~$\c{I} \subseteq \c{A}_m$ and~$n \in \N$, define~$\c{I}^{+n} \eqdef \set{\alpha^{+n}}{\alpha \in \c{I}}$ and~$\c{I}^{\rightarrow n} \eqdef \set{\alpha^{\rightarrow n}}{\alpha \in \c{I}}$.

\begin{definition}
\label{def:conservative}
A graded function~$\arcs : \f{X} = \bigsqcup_{n \ge 0} \f{X}_n \longrightarrow \f{I} = \bigsqcup_{n \ge 0} \f{I}_n$ is \defn{conservative} if
\begin{enumerate}[(i)]
\item $\arcs(\c{X})^{+n}$ and~$\arcs(\c{Y})^{\rightarrow m}$ are both subsets of~$\arcs(\concat{\c{X}}{\c{Y}})$ for any~$\c{X} \in \f{X}_m$ and~${\c{Y} \in \f{X}_n}$,
\item $(r_a, r_b, p, S) \in \arcs(\c{Z})$ implies $(a, b, q, \set{c \in [q]}{r_c \in S}) \in \arcs(\select{\c{Z}}{R})$ for any~$\c{Z} \in \f{X}_p$, any ${R = \{r_1 < \dots < r_q\} \subseteq [p]}$, any~$1 \le a < b \le q$ and any~$S \subseteq {]r_a, r_b[}$.
\end{enumerate}
\end{definition}

\begin{example}
If~$\f{X} = \{\bullet\}^*$ is the decoration set of words on a one element alphabet, then the maps $\bullet^n \mapsto \c{A}_n = \set{(a, b, n, S)}{1 \le a < b \le n \text{ and } S \subseteq {]a,b[}}$ and~$\bullet^n \mapsto \set{(a, b, n, \varnothing)}{1 \le a < b \le n}$ are both conservative.
Further relevant examples will appear in Section~\ref{sec:examples}.
\end{example}

From now on, we assume that we are given a conservative function~$\arcs : \f{X} \longrightarrow \f{I}$.
For~${n \ge 0}$, we denote by~$\eD_n$ the set of \defn{$\f{X}$-decorated noncrossing arc diagrams} of size~$n$, \ie of pairs~$(\c{D}, \c{X})$ where~${\c{X} \in \f{X}_n}$ and~$\c{D}$ is a noncrossing arc diagram contained in~$\arcs(\c{X})$.

We now define a Hopf algebra on $\f{X}$-decorated noncrossing arc diagrams using the map~$\underline{\eta}$ defined in Section~\ref{subsec:surjection}.
We denote by~$\b{k}\eD \eqdef \bigoplus_{n \ge 0} \b{k}\eD_n$ the graded vector subspace of~$\b{k}\eP$ generated by the elements
\[
\PPT_{(\c{D}, \c{X})} \eqdef \sum_{\substack{\sigma \in \fS \\ \underline{\eta}{}_{\arcs(\c{X})}(\sigma) = \c{D}}} \F_{(\sigma, \c{X})},
\]
for all $\f{X}$-decorated noncrossing arc diagrams~$(\c{D}, \c{X})$.
Our central result is the following statement.

\begin{theorem}
\label{thm:HopfAlgebraDecoratedArcDiagrams}
The subspace~$\b{k}\eD$ is a Hopf subalgebra~of~$\b{k}\eP$.
\end{theorem}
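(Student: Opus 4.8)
The plan is to show that $\b{k}\eD$ is simultaneously a subalgebra and a subcoalgebra of $\b{k}\eP$; since $\b{k}\eP$ is graded connected, the antipode is then automatically inherited and $\b{k}\eD$ is a Hopf subalgebra. The single fact I would rely on throughout is that, for a fixed decoration $\c{X}\in\f{X}_n$, the fibers of $\underline{\eta}_{\arcs(\c{X})}$ are exactly the classes of the lattice congruence $\equiv_{\arcs(\c{X})}$ attached to the arc ideal $\arcs(\c{X})$ (Theorem~\ref{thm:arcDiagrams}), and that each such class is a weak order interval carrying the constant decoration $\c{X}$. Thus every generator $\PPT_{(\c{D},\c{X})}$ is the sum of $\F_{(\sigma,\c{X})}$ over a weak order interval $[\mu,\nu]\subseteq\fS_n$ with $\underline{\delta}(\mu)=\c{D}$ and $\nu=\projUp(\mu)$.

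First I would treat the product. Given two generators with classes $[\mu,\nu]\subseteq\fS_m$ and $[\lambda,\omega]\subseteq\fS_n$, the decorated analogue of Proposition~\ref{prop:productIntervalWeakOrder} shows that $\PPT_{(\c{D},\c{X})}\product\PPT_{(\c{E},\c{Y})}$ is the sum of $\F_{(\rho,\concat{\c{X}}{\c{Y}})}$ over the single interval $[\mu\backslash\lambda,\omega\slash\nu]\subseteq\fS_{m+n}$. It then suffices to check that this interval is a union of $\equiv_{\arcs(\concat{\c{X}}{\c{Y}})}$-classes, i.e. that $\mu\backslash\lambda$ is minimal and $\omega\slash\nu$ maximal in their classes. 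A direct inspection of descents gives $\underline{\delta}(\mu\backslash\lambda)=\underline{\delta}(\mu)^{+n}\cup\underline{\delta}(\lambda)^{\rightarrow m}$; since $\underline{\delta}(\mu)\subseteq\arcs(\c{X})$ and $\underline{\delta}(\lambda)\subseteq\arcs(\c{Y})$ by minimality in the factors, condition~(i) of Definition~\ref{def:conservative} yields $\underline{\delta}(\mu\backslash\lambda)\subseteq\arcs(\concat{\c{X}}{\c{Y}})$, so $\mu\backslash\lambda$ is minimal by Theorem~\ref{thm:arcDiagrams}(i). Maximality of $\omega\slash\nu$ follows dually through $\overline{\delta}$, using that $\arcs(\concat{\c{X}}{\c{Y}})$ equally encodes the uncontracted meet-irreducibles. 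Hence the product equals $\sum_{\c{F}}\PPT_{(\c{F},\concat{\c{X}}{\c{Y}})}$, a sum of generators.

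The subcoalgebra property is the core of the argument. Writing $C\eqdef\underline{\eta}_{\arcs(\c{X})}^{-1}(\c{D})$, I would expand $\coproduct\PPT_{(\c{D},\c{X})}=\sum_{\sigma\in C}\sum_{k=0}^{n}\F_{\std{(\sigma,\c{X})}{[k]}}\otimes\F_{\std{(\sigma,\c{X})}{[n]\ssm[k]}}$ and regroup it by the cut index $k$ and the value set $V\eqdef\sigma([k])$ occupying the first $k$ positions. On each such block the two tensor factors carry the constant decorations $\select{\c{X}}{V}$ and $\select{\c{X}}{[n]\ssm V}$, and the standard bijection $\sigma\mapsto(\stdpos{\sigma}{[k]},\stdpos{\sigma}{[n]\ssm[k]})$ identifies $\{\sigma:\sigma([k])=V\}$ with $\fS_k\times\fS_{n-k}$. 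As the coproduct is multiplicity free and the classes of a product congruence are exactly the products of classes, the $(k,V)$-block lies in $\b{k}\eD\otimes\b{k}\eD$ as soon as the image of $C\cap\{\sigma([k])=V\}$ is saturated for $\equiv_{\arcs(\select{\c{X}}{V})}\times\equiv_{\arcs(\select{\c{X}}{[n]\ssm V})}$. Everything thus reduces to the following compatibility: if $\phi\equiv_{\arcs(\select{\c{X}}{V})}\phi'$ and $\psi\equiv_{\arcs(\select{\c{X}}{[n]\ssm V})}\psi'$, then the two permutations of $\fS_n$ obtained by placing $V$ in the first $k$ positions in the orders $\phi,\phi'$ and $[n]\ssm V$ in the last $n-k$ positions in the orders $\psi,\psi'$ are $\equiv_{\arcs(\c{X})}$-equivalent.

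To establish this compatibility I would invoke the rewriting description of the congruences (Theorem~\ref{thm:arcDiagrams}(iii)) and reduce to a single generating step, say $\phi'=\phi\cdot(i\;i+1)$ with $\underline{\alpha}(i,i+1,\phi)\notin\arcs(\select{\c{X}}{V})$ and $\psi=\psi'$. Such a step lifts to the transposition of the corresponding two consecutive positions of the interleaved permutation $\sigma$, which is a legitimate generating step for $\equiv_{\arcs(\c{X})}$ exactly when $\underline{\alpha}(i,i+1,\sigma)\notin\arcs(\c{X})$ — and this is precisely the contrapositive of condition~(ii) of Definition~\ref{def:conservative} applied with $R=V$. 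A symmetric step in the second block is handled the same way with $R=[n]\ssm V$. The hard part, and the only delicate point of the whole proof, is the bookkeeping in this second block: the shadow set of the lifted arc $\underline{\alpha}(k+i,k+i+1,\sigma)$ may contain values coming from the first block, but condition~(ii) only retains the indices $c$ with $r_c\in S$, which discards exactly those spurious values and returns the arc $\underline{\alpha}(i,i+1,\psi)$. Granting this matching, transitivity gives the compatibility, each block is a combination of $\PPT\otimes\PPT$, and summing over $V$ and $k$ shows $\coproduct\PPT_{(\c{D},\c{X})}\in\b{k}\eD\otimes\b{k}\eD$, which finishes the proof.
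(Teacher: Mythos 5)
Your proof is correct, and its two halves compare differently with the paper's argument. For the coproduct you follow essentially the paper's route: reduce, via Theorem~\ref{thm:arcDiagrams}\,(iii) and transitivity, to a single rewriting step in one tensor factor, then lift that step to the interleaved permutation~$\rho$ using the contrapositive of Definition~\ref{def:conservative}\,(ii), with exactly the same bookkeeping point about the lifted arc's shadow picking up values from the other block that condition~(ii) discards. Your repackaging into $(k,V)$-blocks is nevertheless a genuine improvement in one respect: it makes any appeal to multiplicity-freeness unnecessary, since within a block the pair $(\phi,\psi)$ determines~$\rho$, and across blocks one merely adds elements of~$\b{k}\eD \otimes \b{k}\eD$. (This matters: the paper's claim that~$\coproduct\PPT_{(\c{D},\c{Z})}$ is multiplicity-free can fail, \eg for the one-letter decoration set with~$\arcs$ constantly equal to~$\varnothing$, where~$\F_{(1,\bullet)} \otimes \F_{(1,\bullet)}$ appears twice in~$\coproduct\PPT_{(\varnothing,\bullet\bullet)}$; so you should simply delete that clause from your write-up.) For the product your route is genuinely different: the paper fixes~$\c{Z} = \concat{\c{X}}{\c{Y}}$ and shows the support of the product is closed under the rewriting rule by a three-case analysis (swap across the value threshold~$m$, swap among the~$\sigma$-values via Definition~\ref{def:conservative}\,(i), swap among the~$\tau$-values), whereas you prove Proposition~\ref{prop:descriptionProduct} directly: the product is the interval~$[\mu\backslash\lambda, \omega\slash\nu]$, its bottom is class-minimal by the identity~$\underline{\delta}(\mu\backslash\lambda) = \underline{\delta}(\mu)^{+n} \cup \underline{\delta}(\lambda)^{\rightarrow m}$ together with Definition~\ref{def:conservative}\,(i) and Theorem~\ref{thm:arcDiagrams}\,(i), its top is class-maximal dually, and any interval with class-minimal bottom and class-maximal top is saturated because~$\projDown$ and~$\projUp$ are order-preserving. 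This buys you the explicit product formula at no extra cost and avoids the case analysis; the price is that you need the dual half of the theory (maximality detected by~$\overline{\delta}$ inside the same arc ideal), which the paper's product argument never invokes. One last nitpick: $\b{k}\eP$ need not be connected when~$|\f{X}_0| > 1$ (compare the paper's own remark on~$\b{k}\f{D}^\star$), so the ``antipode for free'' sentence needs that hypothesis; but since both you and the paper really verify the sub-bialgebra property, this does not affect the claim being proved.
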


\begin{proof}
We first prove that~$\b{k}\eD$ is a subalgebra of~$\b{k}\eP$, \ie that it is stable by product.
Consider two $\f{X}$-decorated noncrossing arc diagrams~$(\c{D}, \c{X}) \in \f{D}_m$ and~$(\c{E}, \c{Y}) \in \f{D}_n$.
By definition, all permutations that appear in the product~$\PPT_{(\c{D}, \c{X})} \product \PPT_{(\c{E}, \c{Y})}$ are decorated by the product~$\c{Z} \eqdef \concat{\c{X}}{\c{Y}}$.
Observe first that the product~$\PPT_{(\c{D}, \c{X})} \product \PPT_{(\c{E}, \c{Y})}$ is multiplicity-free in the basis~$(\F_{(\rho, \c{Z})})_{(\rho, \c{Z}) \in \eP}$ of~$\b{k}\eP$, since any~$\F_{(\rho, \c{Z})}$ in~$\PPT_{(\c{D}, \c{X})} \product \PPT_{(\c{E}, \c{Y})}$ must come from the product~$\F_{(\stdval{\rho}{[m]}, \c{X})} \product \F_{(\stdval{\rho}{[m+n] \ssm [m]}, \c{Y})}$.
Consider now two decorated permutations~$(\rho, \c{Z})$ and~$(\rho', \c{Z})$ such that~$\underline{\eta}{}_{\arcs(\c{Z})}(\rho) = \underline{\eta}{}_{\arcs(\c{Z})}(\rho')$.
We want to show that~$\F_{(\rho, \c{Z})}$ appears in~$\PPT_{(\c{D}, \c{X})} \product \PPT_{(\c{E}, \c{Y})}$ if and only if~$\F_{(\rho', \c{Z})}$ appears in~$\PPT_{(\c{D}, \c{X})} \product \PPT_{(\c{E}, \c{Y})}$.
Assume first that~${\rho' = \rho \cdot (k \; k+1)}$ for a descent~$k$ of~$\rho$ such that~${\underline{\alpha}(k, k+1, \rho)} \notin \arcs(\c{Z})$ and that~$\F_{(\rho, \c{Z})}$ appears in the product~$\PPT_{(\c{D}, \c{X})} \product \PPT_{(\c{E}, \c{Y})}$.
Let~$\sigma \in \underline{\eta}{}_{\arcs(\c{X})}^{-1}(\c{D})$ and~$\tau \in \underline{\eta}{}_{\arcs(\c{Y})}^{-1}(\c{E})$ be such that~$\rho \in \sigma \shiftedShuffle \tau$.
We then distinguish three cases:
\begin{itemize}
\item If $\rho_{k+1} \le m < \rho_k$, then~$\rho'$ also belongs to~$\sigma \shiftedShuffle \tau$ so that~$\F_{(\rho', \c{Z})}$ also appears in~${\PPT_{(\c{D}, \c{X})} \product \PPT_{(\c{E}, \c{Y})}}$.
\item If~$\rho_k \le m$, then Definition~\ref{def:conservative}\,(i) ensures that~${\underline{\alpha}(i, i+1, \sigma) \notin \c{X}}$, where~$i$ is the descent of~$\sigma$ such that~$\sigma_i = \rho_k$. The permutation~$\sigma' \eqdef \sigma \cdot (i \; i+1)$ thus satisfies~${\underline{\eta}{}_{\arcs(\c{X})}(\sigma') = \underline{\eta}{}_{\arcs(\c{X})}(\sigma) = \c{D}}$. Since~$\rho' \in \sigma' \shiftedShuffle \tau$, it follows that~$\F_{(\rho', \c{Z})}$ also appears in~${\PPT_{(\c{D}, \c{X})} \product \PPT_{(\c{E}, \c{Y})}}$.
\item Finally, if~$m < \rho_{k+1}$, the argument is similar using~$\tau' \eqdef \tau \cdot (j \; j+1)$ where~$\tau_j = \rho_k$.
\end{itemize}
By transitivity in Theorem~\ref{thm:arcDiagrams}\,(iii), we obtain that~$\F_{(\rho, \c{Z})}$ appears in~$\PPT_{(\c{D}, \c{X})} \product \PPT_{(\c{E}, \c{Y})}$ if and only if~$\F_{(\rho', \c{Z})}$ appears in~$\PPT_{(\c{D}, \c{X})} \product \PPT_{(\c{E}, \c{Y})}$.
Therefore~$\b{k}\eD$ is a subalgebra of~$\b{k}\eP$.

We now prove that~$\b{k}\f{D}$ is a subcoalgebra of~$\b{k}\f{P}$, \ie that it is stable by coproduct.
Consider a decorated noncrossing arc diagram~$(\c{D}, \c{Z}) \in \f{D}_p$.
Observe first that the coproduct~$\coproduct\PPT_{(\c{D}, \c{Z})}$ is multiplicity-free in the basis~$(\F_{(\rho, \c{Z})})_{(\rho, \c{Z}) \in \eP}$ of~$\b{k}\eP$, since any~${\F_{(\sigma, \c{X})} \otimes \F_{(\tau, \c{Y})}}$ in~$\coproduct\PPT_{(\c{D}, \c{Z})}$ must come from the coproduct~$\coproduct\F_{(\rho, \c{Z})}$ where~$\rho(i) = \sigma(i)$ if~$i \in [m]$ and~$\rho(i) = m + \sigma(i-m)$ if~${i \in [m+n] \ssm [m]}$.
Consider now four decorated permutations~$(\sigma, \c{X})$, $(\sigma', \c{X})$, $(\tau, \c{Y})$ and~$(\tau', \c{Y})$ such that ${\underline{\eta}{}_{\arcs(\c{X})}(\sigma) = \underline{\eta}{}_{\arcs(\c{X})}(\sigma')}$ and~$\underline{\eta}{}_{\arcs(\c{Y})}(\tau) = \underline{\eta}{}_{\arcs(\c{Y})}(\tau')$.
We want to show that~${\F_{(\sigma, \c{X})} \otimes \F_{(\tau, \c{Y})}}$ appears in the coproduct~$\coproduct\PPT_{(\c{D}, \c{Z})}$ if and only if~${\F_{(\sigma', \c{X})} \otimes \F_{(\tau', \c{Y})}}$ appears in the coproduct~$\coproduct\PPT_{(\c{D}, \c{Z})}$.
Assume first that~$\sigma' = \sigma \cdot (i \; i+1)$ for a descent~$i$ of~$\sigma$ with~${\underline{\alpha}(i, i+1, \sigma) \notin \c{X}}$, that~$\tau = \tau'$ and that~$\F_{(\sigma, \c{X})} \otimes \F_{(\tau, \c{Y})}$ appears in the coproduct~$\coproduct\PPT_{(\c{D}, \c{Z})}$.
By definition of the coproduct, there exist~${\rho \in \underline{\eta}{}_{\arcs(\c{Z})}^{-1}(\c{D})}$ and~$k \in [p]$ such that~${(\sigma, \c{X}) = \std{(\rho, \c{Z})}{[k]}}$ and~${(\tau, \c{Y}) = \std{(\rho, \c{Z})}{[p] \ssm [k]}}$.
Since~${\c{X} = \select{\c{Z}}{\rho^{-1}([k])}}$ and~$\underline{\alpha}(i, i+1, \sigma) \notin \c{X}$, Definition~\ref{def:conservative}\,(ii) ensures that~$\underline{\alpha}(i, i+1, \rho) \notin \c{Z}$.
We conclude that~$\rho' = \rho \cdot (i \; i+1)$ is in~$\underline{\eta}{}_{\arcs(\c{Z})}^{-1}(\c{D})$.
Since moreover~${(\sigma', \c{X}) = \std{(\rho', \c{Z})}{[k]}}$ and~${(\tau, \c{Y}) = \std{(\rho', \c{Z})}{[p] \ssm [k]}}$, we get that~${\F_{(\sigma', \c{X})} \otimes \F_{(\tau, \c{Y})}}$ appears in the coproduct~$\coproduct\PPT_{(\c{D}, \c{Z})}$.
By symmetry and by transitivity in Theorem~\ref{thm:arcDiagrams}\,(ii), we conclude that~$\F_{(\sigma, \c{X})} \otimes \F_{(\tau, \c{Y})}$ appears in the coproduct~$\coproduct\PPT_{(\c{D}, \c{Z})}$ if and only if~$\F_{(\sigma', \c{X})} \otimes \F_{(\tau', \c{Y})}$ appears in the coproduct~$\coproduct\PPT_{(\c{D}, \c{Z})}$ for any~$(\sigma, \c{X})$, $(\sigma', \c{X})$, $(\tau, \c{Y})$ and~$(\tau', \c{Y})$ such that~$\underline{\eta}{}_{\arcs(\c{X})}(\sigma) = \underline{\eta}{}_{\arcs(\c{X})}(\sigma')$ and~$\underline{\eta}{}_{\arcs(\c{Y})}(\tau) = \underline{\eta}{}_{\arcs(\c{Y})}(\tau')$.
Therefore~$\b{k}\eD$ is a subcoalgebra of~$\b{k}\eP$.
\end{proof}


\enlargethispage{.7cm}
We now provide an analogue of Proposition~\ref{prop:productIntervalWeakOrder} for decorated noncrossing arc diagrams.

\begin{proposition}
\label{prop:descriptionProduct}
Consider two $\f{X}$-decorated noncrossing arc diagrams~$(\c{D}, \c{X})$ and~$(\c{E}, \c{Y})$, and their corresponding weak order intervals~$[\mu, \nu] \eqdef \underline{\eta}_{\arcs(\c{X})}^{-1}(\c{D})$ and~$[\lambda, \omega]  \eqdef \underline{\eta}_{\arcs(\c{Y})}^{-1}(\c{E})$.
Then
\[
\PPT_{(\c{D}, \c{X})} \product \PPT_{(\c{E}, \c{Y})} = \sum_{\c{F}} \PPT_{(\c{F}, \concat{\c{X}}{\c{Y}} )},
\]
where~$\c{F}$ ranges in the interval between~$\c{D}\backslash\c{E} \eqdef \underline{\eta}_{\arcs(\concat{\c{X}}{\c{Y}})}(\mu\backslash\lambda)$ and~$\c{E}\slash\c{D} \eqdef \underline{\eta}_{\arcs(\concat{\c{X}}{\c{Y}})}(\omega\slash\nu)$ in the lattice of noncrossing arc diagrams in~$\arcs(\concat{\c{X}}{\c{Y}})$.
\end{proposition}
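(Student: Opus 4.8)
The plan is to reduce the statement to the decorated analogue of Proposition~\ref{prop:productIntervalWeakOrder}, the subalgebra property already established in Theorem~\ref{thm:HopfAlgebraDecoratedArcDiagrams}, and a standard fact about surjective lattice homomorphisms.

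First I would use that $[\mu, \nu] = \underline{\eta}_{\arcs(\c{X})}^{-1}(\c{D})$ and $[\lambda, \omega] = \underline{\eta}_{\arcs(\c{Y})}^{-1}(\c{E})$ are congruence classes, hence weak order intervals, so that $\PPT_{(\c{D}, \c{X})} = \sum_{\mu \le \sigma \le \nu} \F_{(\sigma, \c{X})}$ and $\PPT_{(\c{E}, \c{Y})} = \sum_{\lambda \le \tau \le \omega} \F_{(\tau, \c{Y})}$. All permutations occurring in the product carry the decoration $\c{Z} \eqdef \concat{\c{X}}{\c{Y}}$, and by the decorated analogue of Proposition~\ref{prop:productIntervalWeakOrder} noted after Proposition~\ref{prop:algebraDecoratedPermutations}, the product equals $\sum_{\rho} \F_{(\rho, \c{Z})}$ where $\rho$ ranges over the weak order interval $[\mu\backslash\lambda, \omega\slash\nu]$ of $\fS_{m+n}$.

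Next, by Theorem~\ref{thm:HopfAlgebraDecoratedArcDiagrams} this product lies in $\b{k}\eD$, so it is a linear combination of the basis elements $\PPT_{(\c{F}, \c{Z})}$. Since each $\PPT_{(\c{F}, \c{Z})}$ is the (multiplicity-free) sum of all $\F_{(\rho, \c{Z})}$ with $\underline{\eta}_{\arcs(\c{Z})}(\rho) = \c{F}$, comparing the two multiplicity-free expansions forces the interval $[\mu\backslash\lambda, \omega\slash\nu]$ to be the disjoint union of the full $\equiv_{\arcs(\c{Z})}$-congruence classes it meets, each appearing with coefficient one. Hence the product is $\sum_{\c{F} \in \mathcal{G}} \PPT_{(\c{F}, \c{Z})}$ with $\mathcal{G} \eqdef \underline{\eta}_{\arcs(\c{Z})}\big( [\mu\backslash\lambda, \omega\slash\nu] \big)$, and it only remains to identify $\mathcal{G}$ with the interval between $\c{D}\backslash\c{E} = \underline{\eta}_{\arcs(\c{Z})}(\mu\backslash\lambda)$ and $\c{E}\slash\c{D} = \underline{\eta}_{\arcs(\c{Z})}(\omega\slash\nu)$ in the lattice of noncrossing arc diagrams in $\arcs(\c{Z})$.

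For this last step I would invoke that, through the bijection of Theorem~\ref{thm:arcDiagrams}\,(ii), the map $\pi \eqdef \underline{\eta}_{\arcs(\c{Z})}$ is exactly the canonical projection onto the lattice quotient, hence a surjective lattice homomorphism onto that lattice. Being order-preserving, $\pi$ gives $\mathcal{G} \subseteq [\pi(\mu\backslash\lambda), \pi(\omega\slash\nu)]$. For the reverse inclusion, given $\c{F}$ with $\pi(\mu\backslash\lambda) \le \c{F} \le \pi(\omega\slash\nu)$, choose any $\rho_0$ with $\pi(\rho_0) = \c{F}$ by surjectivity and set $\rho \eqdef (\rho_0 \join (\mu\backslash\lambda)) \meet (\omega\slash\nu)$; then $\rho \in [\mu\backslash\lambda, \omega\slash\nu]$, and since $\pi$ commutes with $\meet$ and $\join$ one gets $\pi(\rho) = (\c{F} \join \pi(\mu\backslash\lambda)) \meet \pi(\omega\slash\nu) = \c{F}$, so $\c{F} \in \mathcal{G}$. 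This identifies $\mathcal{G}$ with the claimed interval and concludes the proof. The main obstacle is precisely this identification of the index set as an honest interval: everything else is a direct application of results in hand, and the crux is the formula $\rho = (\rho_0 \join (\mu\backslash\lambda)) \meet (\omega\slash\nu)$ showing that a surjective lattice homomorphism sends an interval onto an interval; one should also make sure the lattice structure on noncrossing arc diagrams in $\arcs(\c{Z})$ is genuinely the quotient lattice transported through Theorem~\ref{thm:arcDiagrams}\,(ii), so that $\pi$ is a lattice morphism onto it.
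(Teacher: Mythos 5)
Your proof is correct, and its first half coincides with the paper's: both expand $\PPT_{(\c{D}, \c{X})}$ and $\PPT_{(\c{E}, \c{Y})}$ as sums over the congruence classes $[\mu,\nu]$ and $[\lambda,\omega]$ and invoke the decorated analogue of Proposition~\ref{prop:productIntervalWeakOrder} to write the product as the multiplicity-free sum of the $\F_{(\rho, \concat{\c{X}}{\c{Y}})}$ over the weak order interval $[\mu\backslash\lambda, \omega\slash\nu]$. Where you genuinely diverge is on the remaining key point. The paper simply \emph{asserts} the lattice-theoretic fact that the $\underline{\eta}_{\arcs(\concat{\c{X}}{\c{Y}})}$-fibers of the arc diagrams in the quotient interval partition this weak order interval; implicitly this rests on $\mu\backslash\lambda$ being minimal and $\omega\slash\nu$ maximal in their congruence classes (a consequence of Definition~\ref{def:conservative}\,(i) applied to $\underline{\delta}(\mu)$ and $\underline{\delta}(\lambda)$, and its dual), since then the order-preserving projections $\projDown$ and $\projUp$ force every class meeting the interval to lie inside it. You instead obtain the ``disjoint union of full fibers'' statement for free from Theorem~\ref{thm:HopfAlgebraDecoratedArcDiagrams}, by comparing the two multiplicity-free expansions in the $\F$-basis --- a nice observation that sidesteps the minimality/maximality verification entirely --- and you then identify the resulting index set with the quotient-lattice interval by the standard surjective-lattice-homomorphism argument $\rho = (\rho_0 \join (\mu\backslash\lambda)) \meet (\omega\slash\nu)$, which is legitimate here because Section~\ref{subsec:latticeQuotients} defines meets and joins of congruence classes via representatives, so that $\underline{\eta}_{\arcs(\concat{\c{X}}{\c{Y}})}$ is indeed a surjective lattice morphism onto the lattice of noncrossing arc diagrams in $\arcs(\concat{\c{X}}{\c{Y}})$. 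The trade-off: the paper's proof is a one-line reduction that leaves a standard but not entirely trivial lattice fact to the reader, while yours is longer but self-contained and reuses machinery already proved --- at the cost of making the proposition logically dependent on Theorem~\ref{thm:HopfAlgebraDecoratedArcDiagrams}, a dependence the paper's own argument avoids.
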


\begin{proof}
This follows from Proposition~\ref{prop:productIntervalWeakOrder} and the fact that the $\underline{\eta}_{\arcs(\concat{\c{X}}{\c{Y}})}$ fibers of the noncrossing arc diagrams in the $\arcs(\concat{\c{X}}{\c{Y}})$-interval between~$\c{D}\backslash\c{E}$ and~$\c{E}\slash\c{D}$ form a partition of the  weak order interval between~$\mu\backslash\lambda$ and~$\omega\slash\nu$:
\[
\PPT_{(\c{D}, \c{X})} \product \PPT_{(\c{E}, \c{Y})}
= \sum_{\mu \le \sigma \le \nu} \!\! \F_{(\sigma, \c{X})} \product \!\! \sum_{\lambda \le \tau \le \omega} \!\! \F_{(\tau, \c{Y})}
= \!\!\!\! \sum_{\mu\backslash\lambda \le \rho \le \omega\slash\nu} \!\!\!\! \F_{(\rho, \concat{\c{X}}{\c{Y}})}
= \sum_{\c{F}} \PPT_{(\c{F}, \concat{\c{X}}{\c{Y}} )}
\qedhere
\]
\end{proof}

%

\subsection{Decoration products and decoration subsets}

To close our generic Hopf algebra constructions, we observe two natural operations on decoration sets that behave properly with our construction.
The straightforward proofs are left to the reader.
First, we observe that we can obtain Hopf algebra structures on arc diagrams decorated by cartesian products of decorations.

\begin{proposition}
\label{prop:decorationProduct}
The Cartesian product~$\f{X} \times \f{X}'$ of two decoration sets~$\f{X}$ and~$\f{X}'$, endowed with the concatenation and selection defined by
\begin{align*}
\concat{(\c{X},\c{X}')}{(\c{Y},\c{Y}')} & \eqdef (\concat{\c{X}}{\c{Y}}, \concat{\c{X}'}{\c{Y}'}) \\
\text{and} \qquad \select{(\c{Z},\c{Z}')}{R} & \eqdef (\select{\c{Z}}{R}, \select{\c{Z}'}{R}),
\end{align*}
is a decoration set.
Moreover, for two conservative functions~$\arcs: \f{X} \to \f{I}$ and~$\arcs': \f{X}' \to \f{I}$, the function~$\arcs \cap \arcs': \f{X} \times \f{X}' \to \f{I}$ defined by~$(\arcs \cap \arcs') (\c{X}, \c{X}') \eqdef \arcs(\c{X}) \cap \arcs'(\c{X}')$ is conservative.
\end{proposition}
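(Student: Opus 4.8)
The plan is to verify the decoration-set axioms of Definition~\ref{def:decorationSet} for~$\f{X} \times \f{X}'$ and then the conservativity conditions of Definition~\ref{def:conservative} for~$\arcs \cap \arcs'$, in each case reducing everything to the already-established properties of the two factors. Since both the concatenation and the selection on~$\f{X} \times \f{X}'$ are defined componentwise, each of the three decoration-set axioms decouples into the corresponding axiom for~$\f{X}$ in the first coordinate and for~$\f{X}'$ in the second. For instance, associativity of concatenation follows because~$\concat{(\c{X},\c{X}')}{(\concat{(\c{Y},\c{Y}')}{(\c{Z},\c{Z}')})} = (\concat{\c{X}}{(\concat{\c{Y}}{\c{Z}})}, \concat{\c{X}'}{(\concat{\c{Y}'}{\c{Z}'})})$, and both components associate by Definition~\ref{def:decorationSet}\,(i) applied to~$\f{X}$ and~$\f{X}'$ respectively; the two selection identities~(ii) and~(iii) are handled identically, coordinate by coordinate.

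First I would dispatch the decoration-set claim with the observation above, noting that all three identities hold componentwise and that the grading is respected since~$\concatf$ and~$\selectf$ have the correct target degrees in each factor. This part is genuinely routine and I would state it in a sentence or two rather than spelling out all three computations.

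The substantive content is the conservativity of~$\arcs \cap \arcs'$, and here the componentwise structure interacts with the intersection. For condition~(i), given~$(\c{X},\c{X}') \in (\f{X}\times\f{X}')_m$ and~$(\c{Y},\c{Y}') \in (\f{X}\times\f{X}')_n$, I would show that~$(\arcs \cap \arcs')(\c{X},\c{X}')^{+n} = \arcs(\c{X})^{+n} \cap \arcs'(\c{X}')^{+n}$, using that augmentation commutes with intersection (augmentation~$\alpha \mapsto \alpha^{+n}$ is injective and applied setwise, so it distributes over~$\cap$). By conservativity of~$\arcs$ we have~$\arcs(\c{X})^{+n} \subseteq \arcs(\concat{\c{X}}{\c{Y}})$ and by conservativity of~$\arcs'$ we have~$\arcs'(\c{X}')^{+n} \subseteq \arcs'(\concat{\c{X}'}{\c{Y}'})$, so the intersection is contained in~$\arcs(\concat{\c{X}}{\c{Y}}) \cap \arcs'(\concat{\c{X}'}{\c{Y}'}) = (\arcs \cap \arcs')(\concat{(\c{X},\c{X}')}{(\c{Y},\c{Y}')})$, which is exactly what is required; the shifted case~$\arcs(\c{Y})^{\rightarrow m}$ is symmetric. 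For condition~(ii), an arc~$(r_a, r_b, p, S) \in (\arcs \cap \arcs')(\c{Z},\c{Z}')$ lies in \emph{both}~$\arcs(\c{Z})$ and~$\arcs'(\c{Z}')$, so applying Definition~\ref{def:conservative}\,(ii) to each factor separately places the restricted arc~$(a, b, q, \set{c \in [q]}{r_c \in S})$ in both~$\arcs(\select{\c{Z}}{R})$ and~$\arcs'(\select{\c{Z}'}{R})$, hence in their intersection, which is~$(\arcs \cap \arcs')(\select{(\c{Z},\c{Z}')}{R})$ by the componentwise definition of selection.

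The only mild obstacle is bookkeeping: one must check that the set operations~$(-)^{+n}$, $(-)^{\rightarrow m}$, and arc restriction all commute with intersection, so that the intersection of the two images is correctly identified with the image of the product/selection under~$\arcs \cap \arcs'$. This is immediate because all these operations act pointwise on arcs (and augmentation/shift are injective), but it is the one place where the intersection and the functorial operations must be seen to interchange. Once this is noted, both parts of conservativity follow by intersecting the two factorwise inclusions, and the proof is complete.
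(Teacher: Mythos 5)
Your proof is correct, and it is precisely the routine componentwise verification that the paper has in mind when it declares these proofs ``straightforward'' and leaves them to the reader: the decoration-set axioms decouple coordinate by coordinate, and conservativity of~$\arcs \cap \arcs'$ follows by intersecting the factorwise inclusions, with the key bookkeeping observation (which you correctly isolate) that the injective operations~$(-)^{+n}$ and~$(-)^{\rightarrow m}$ commute with intersection. The only detail you leave implicit is that~$\arcs(\c{X}) \cap \arcs'(\c{X}')$ is indeed an element of~$\f{I}$, i.e.\ that an intersection of upper ideals of the forcing order is again an upper ideal, but this is immediate.
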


On the other hand, we observe that decoration subsets define Hopf subalgebras.

\begin{proposition}
\label{prop:decorationSubset}
If~$\f{X} \subseteq \f{X'}$ are two decoration sets, and~$\arcs': \f{X}' \to \f{I}$ is a conservative function, then the restriction~$\arcs$ of~$\arcs'$ to~$\f{X}$ is conservative.
Therefore, the Hopf algebra~$\b{k}\f{D}$ constructed from~$\arcs$ is a Hopf subalgebra of the Hopf algebra~$\b{k}\f{D}'$ constructed from~$\arcs'$.
\end{proposition}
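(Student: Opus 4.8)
The plan is to treat the two assertions separately, as both reduce to a single observation: restricting the decoration set alters none of the relevant structure carried by the elements that survive the restriction. First I would pin down the meaning of $\f{X} \subseteq \f{X}'$, namely that $\f{X}_n \subseteq \f{X}'_n$ for every $n$ and that the concatenation $\concatf$ and selection $\selectf$ of $\f{X}$ are the restrictions of those of $\f{X}'$; in particular $\f{X}$ is closed under both operations. With this convention, writing $\arcs \eqdef \arcs'|_{\f{X}}$, one has the key identity $\arcs(\c{X}) = \arcs'(\c{X})$ for every $\c{X} \in \f{X}$, and all comparisons below become equalities rather than mere isomorphisms.

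For the conservativity of $\arcs$, I would simply specialize the two defining conditions of Definition~\ref{def:conservative} for $\arcs'$ to elements of $\f{X} \subseteq \f{X}'$. Condition~(i) quantifies over $\c{X} \in \f{X}_m$ and $\c{Y} \in \f{X}_n$ and refers only to $\arcs(\c{X})$, $\arcs(\c{Y})$, and $\arcs(\concat{\c{X}}{\c{Y}})$, the last being well defined since $\concat{\c{X}}{\c{Y}} \in \f{X}$; by the key identity these equal the corresponding values of $\arcs'$, so the statement for $\arcs$ follows from that for $\arcs'$. Condition~(ii) has the same shape, using that $\select{\c{Z}}{R} \in \f{X}$ whenever $\c{Z} \in \f{X}$. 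This part is entirely formal.

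For the Hopf subalgebra statement, I would first note that $\eP \subseteq \eP'$, since an $\f{X}$-decorated permutation is in particular an $\f{X}'$-decorated one, and that the product and coproduct of $\b{k}\eP'$ restrict to those of $\b{k}\eP$ precisely because the concatenation and selection of $\f{X}$ are restrictions of those of $\f{X}'$; hence $\b{k}\eP$ is a Hopf subalgebra of $\b{k}\eP'$. Next, for any $\f{X}$-decorated noncrossing arc diagram~$(\c{D}, \c{X})$, the identity $\arcs(\c{X}) = \arcs'(\c{X})$ gives $\underline{\eta}_{\arcs(\c{X})} = \underline{\eta}_{\arcs'(\c{X})}$, so the generator $\PPT_{(\c{D}, \c{X})}$ is literally the same element of $\b{k}\eP \subseteq \b{k}\eP'$ in both constructions. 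Therefore $\b{k}\eD$ is the graded subspace of $\b{k}\eD'$ spanned by those basis elements $\PPT_{(\c{D}, \c{X})}$ with $\c{X} \in \f{X}$. By Theorem~\ref{thm:HopfAlgebraDecoratedArcDiagrams}, $\b{k}\eD$ is already stable under the product and coproduct of $\b{k}\eP$, which agree with those of $\b{k}\eP'$; concretely, Proposition~\ref{prop:descriptionProduct} shows a product of two such generators produces only generators decorated by $\concat{\c{X}}{\c{Y}} \in \f{X}$, while the coproduct produces only generators decorated by selections of $\c{X}$, again in $\f{X}$. Hence $\b{k}\eD$ is a Hopf subalgebra of $\b{k}\eD'$.

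The only point requiring care, rather than genuine difficulty, is the bookkeeping that all three layers of structure (decorated permutations, the surjections $\underline{\eta}$, and the $\PPT$-generators) are compatible with the inclusion $\f{X} \hookrightarrow \f{X}'$. Once the identity $\arcs(\c{X}) = \arcs'(\c{X})$ is in hand, every such comparison collapses to an identity, so no real obstacle arises; this matches the paper's remark that the proof is straightforward.
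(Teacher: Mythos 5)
Your proof is correct and matches the paper's intent exactly: the paper explicitly leaves this verification to the reader as straightforward, and your argument is precisely that straightforward check — interpreting $\f{X} \subseteq \f{X}'$ as a graded inclusion compatible with concatenation and selection, noting $\arcs(\c{X}) = \arcs'(\c{X})$ so the two defining conditions of Definition~\ref{def:conservative} restrict verbatim, and observing that the generators $\PPT_{(\c{D},\c{X})}$ with $\c{X} \in \f{X}$ are literally the same elements inside $\b{k}\eP \subseteq \b{k}\eP'$, so stability under product and coproduct follows from Theorem~\ref{thm:HopfAlgebraDecoratedArcDiagrams}. No gaps; the bookkeeping you flag (closure of $\f{X}$ under both operations) is indeed the only point that needs to be made explicit.
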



\section{Applications}
\label{sec:examples}

In this section, we provide examples of applications of Theorems~\ref{thm:HopfAlgebraDecoratedPermutations} and~\ref{thm:HopfAlgebraDecoratedArcDiagrams}.


\subsection{Insertional, translational, and Hopf families of congruences}

For all~$n \in \N$, fix a lattice congruence~$\equiv_n$ of the weak order on~$\f{S}_n$, with arc ideal~$\c{I}_n$.
As a first application of Theorem~\ref{thm:HopfAlgebraDecoratedArcDiagrams}, we obtain sufficient conditions for the family~$(\equiv_n)_{n \in \N}$ to define a Hopf subalgebra~of~$\b{k}\f{S}$.

\begin{corollary}[{\cite[Thm.~1.2 \& 1.3]{Reading-HopfAlgebras}}]
\label{coro:HopfFamilies}
For all~$n \in \N$, consider a lattice congruence~$\equiv_n$ of the weak order on~$\f{S}_n$, with arc ideal~$\c{I}_n$.
If
\begin{itemize}
\item both~$\c{I}_m^{+n}$ and~$\c{I}_n^{\rightarrow m}$ are contained in~$\c{I}_{m+n}$ for all~$m, n \in \N$,
\item $(r_a, r_b, p, S) \in \c{I}_p$ implies~$(a, b, q, \set{c \in [q]}{r_c \in S}) \in \c{I}_q$ for any~${R = \{r_1 < \dots < r_q\} \subseteq [p]}$, any~$1 \le a < b \le q$ and any~$S \subseteq {]r_a, r_b[}$,
\end{itemize}
then the subvector space of~$\b{k}\f{S}$ generated by the sums~$\sum_{\sigma} \F_\sigma$ over the classes of the congruences~$\equiv_n$ is a Hopf subalgebra of C.~Malvenuto and C.~Reutenauer's Hopf algebra~$\b{k}\f{S}$.
\end{corollary}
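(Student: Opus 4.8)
The plan is to recognize this statement as the specialization of Theorem~\ref{thm:HopfAlgebraDecoratedArcDiagrams} to the trivial decoration set $\f{X} = \{\bullet\}^*$ of words on a one-element alphabet, already noted to be conservative in the example following Definition~\ref{def:conservative}. The crucial initial observation is that this decoration set has a single element $\bullet^n$ in each degree~$n$, so an $\f{X}$-decorated permutation $(\sigma, \bullet^n)$ carries no more information than the permutation $\sigma$ itself. Consequently the Hopf algebra $\b{k}\eP$ of $\f{X}$-decorated permutations is canonically identified with C.~Malvenuto and C.~Reutenauer's Hopf algebra $\b{k}\fS$ via $\F_{(\sigma, \bullet^n)} \mapsto \F_\sigma$, matching product and coproduct since $\concat{\bullet^m}{\bullet^n} = \bullet^{m+n}$.

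Next I would define the graded function $\arcs : \f{X} \to \f{I}$ by $\arcs(\bullet^n) \eqdef \c{I}_n$; this is well defined precisely because each $\c{I}_n$ is an arc ideal, \ie an element of $\f{I}_n$. The heart of the argument is then to verify that the two hypotheses of the corollary are exactly the two conditions of Definition~\ref{def:conservative} for this particular $\arcs$. For condition~(i), taking $\c{X} = \bullet^m$ and $\c{Y} = \bullet^n$ gives $\concat{\c{X}}{\c{Y}} = \bullet^{m+n}$, so the requirement that $\arcs(\c{X})^{+n}$ and $\arcs(\c{Y})^{\rightarrow m}$ both lie in $\arcs(\concat{\c{X}}{\c{Y}})$ reads exactly $\c{I}_m^{+n} \subseteq \c{I}_{m+n}$ and $\c{I}_n^{\rightarrow m} \subseteq \c{I}_{m+n}$, which is the first bullet. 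For condition~(ii), the selection on $\f{X}$ is simply $\select{\bullet^p}{R} = \bullet^{|R|}$, so $\arcs(\select{\bullet^p}{R}) = \c{I}_q$ with $q = |R|$, and the implication of Definition~\ref{def:conservative}\,(ii) collapses word-for-word to the second bullet. Hence $\arcs$ is conservative, and the only point requiring minor care is to confirm that the index set $\set{c \in [q]}{r_c \in S}$ produced by the selection matches the one in the corollary's hypothesis, which it does by construction.

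With $\arcs$ shown to be conservative, Theorem~\ref{thm:HopfAlgebraDecoratedArcDiagrams} immediately yields that $\b{k}\eD$ is a Hopf subalgebra of $\b{k}\eP = \b{k}\fS$. To finish, I would identify the generators of $\b{k}\eD$ with the sums appearing in the statement. Since the decoration is trivial, the generator $\PPT_{(\c{D}, \bullet^n)} = \sum_{\underline{\eta}{}_{\c{I}_n}(\sigma) = \c{D}} \F_{(\sigma, \bullet^n)}$ becomes $\sum_{\underline{\eta}{}_{\c{I}_n}(\sigma) = \c{D}} \F_\sigma$. By Theorem~\ref{thm:arcDiagrams} together with the description of $\underline{\eta}{}_\c{I}$ in Section~\ref{subsec:surjection}, the fibers of the surjection $\underline{\eta}{}_{\c{I}_n}$ are exactly the congruence classes of $\equiv_n$; thus each generator $\PPT_{(\c{D}, \bullet^n)}$ is precisely the sum $\sum_\sigma \F_\sigma$ over a class of $\equiv_n$, and the subspace they span coincides with $\b{k}\eD$. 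I do not expect any genuine obstacle here: the entire content is the translation of the corollary's two bullets into the conservativity conditions, the only real risk being a notational mismatch in the selection indexing, which a careful comparison dispels.
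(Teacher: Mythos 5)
Your proposal is correct and follows exactly the paper's own argument: specialize Theorem~\ref{thm:HopfAlgebraDecoratedArcDiagrams} to the one-letter decoration set $\{\bullet\}^*$, identify $\b{k}\eP$ with $\b{k}\fS$, define $\arcs(\bullet^n) = \c{I}_n$, and observe that the two bullet hypotheses are precisely the two conditions of Definition~\ref{def:conservative}. Your additional care in matching the selection indexing and in identifying the generators $\PPT_{(\c{D},\bullet^n)}$ with sums over congruence classes simply makes explicit what the paper leaves implicit.
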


\begin{proof}
Consider the decoration set~$\{\bullet\}^*$ of words over a one element alphabet and the function ${\arcs : \{\bullet\}^* \to \f{I}}$ given by~${\arcs(\bullet^n) = \c{I}_{\equiv_n}}$.
Note that the Hopf algebra~$\b{k}\f{P}$ of permutations decorated with~$\{\bullet\}^*$ is just isomorphic to C.~Malvenuto and C.~Reutenauer's Hopf algebra~$\b{k}\f{S}$ on permutations.
Moreover, the conditions of the statement assert that~$\arcs$ is conservative.
The result thus immediately follows from Theorem~\ref{thm:HopfAlgebraDecoratedArcDiagrams}.
\end{proof}

The conditions of Corollary~\ref{coro:HopfFamilies} are essentially the translational and insertional conditions given by N.~Reading in~\cite[Thm.~1.2 \& 1.3]{Reading-HopfAlgebras}.
Note however that our condition is slightly weaker as we only require that the restriction of~$\equiv_{m+n}$ to~$\set{\sigma\backslash\tau}{\sigma \in \f{S}_m \text{ and } \tau \in \f{S}_n}$ refines the congruence relation induced by~${\equiv_m} \times {\equiv_n}$ on~$\set{\sigma\backslash\tau}{\sigma \in \f{S}_m \text{ and } \tau \in \f{S}_n}$ while N.~Reading's definition of translational families requires that these congruences coincide.

\begin{example}
Corollary~\ref{coro:HopfFamilies} covers various families of lattice congruences, producing Hopf algebra structures on permutations~\cite{MalvenutoReutenauer}, on binary trees~\cite{LodayRonco, HivertNovelliThibon-algebraBinarySearchTrees}, on binary sequences~\cite{GelfandKrobLascouxLeclercRetakhThibon}, on diagonal quadragulations~\cite{LawReading} or equivalently on twin binary trees~\cite{Giraudo}, on $k$-twists~\cite{Pilaud-brickAlgebra}, on $k$-descent schemes~\cite{NovelliReutenauerThibon, Pilaud-brickAlgebra}, etc.
\end{example}


\subsection{Bounded crossings}
\label{subsec:boundedCrossings}

We now consider the family of arc ideals~$\c{A}_{\north, \south, \east, \west}^{< k}$ defined in Example~\ref{exm:boundedCrossingCongruences}.
Consider the decoration set~$\f{X} = \bigsqcup_{n \in \N} \f{X}_n$, where~$\f{X}_n$ is the set of quadruples of functions~$[n] \to \N$, and where the concatenation is defined by
\[
\concat{(\north, \south, \east, \west)}{(\north', \south', \east', \west')}(a) =
\begin{cases}
(\north(a), \south(a), \east(a), \west(a)) & \text{if } a \le m \\
(\north'(a), \south'(a), \east'(a), \west'(a)) & \text{if } a > m
\end{cases}
\]
(in other words, the usual concatenation of words in~$(\N^4)^*$), and the selection is defined by
\[
\select{(\north, \south, \east, \west)}{R}(a) = (\north(r_a), \; \south(r_a), \; \min \set{\east(s)}{s \in {]r_{a-1}, r_a]}}, \; \min \set{\west(s)}{s \in {[r_a, r_{a+1}[}}).
\]
Choose~$k \in \N$ and define the function~${\arcs : \f{X}_n \to \c{A}_n}$ by~$\arcs(\north, \south, \east, \west) = \c{A}_{\north, \south, \east, \west}^{< k}$.
Recall from Example~\ref{exm:boundedCrossingCongruences} that for each~$a \in [n]$, we place $\north(a)$ upper vertical walls above~$a$, $\south(a)$ lower vertical walls below~$a$ and~$\min(\east(a), \west(a+1))$ horizontal walls between~$a$ and~$a+1$, and that an arc belongs to~$\c{A}_{\north, \south, \east, \west}^{< k}$ if it crosses at most~$k-1$ of these walls.
The function~$\arcs$ is conservative since
\begin{itemize}
\item for any~$u \in \f{X}_m$, $v \in \f{X}_n$, $\alpha \in \arcs(u)$ and~$\beta \in \arcs(v)$, the walls of~$uv$ crossed by~$\alpha^{+n}$ are precisely the walls of~$u$ crossed by~$\alpha$, while the walls of~$uv$ crossed by~$\beta^{\rightarrow m}$ are precisely the $m$-translates of the walls of~$v$ crossed by~$\beta$,
\item for any~$w \in \f{X}_p$, $R = \{r_1, \dots, r_q\} \subseteq [p]$, $1 \le a < b \le q$ and~$S \subseteq {]r_a, r_b[}$, the walls crossed by the arc~$(a, b, q, \set{c}{r_c \in S})$ are walls crossed by the arc~$(r_a, r_b, p, S)$ (but the latter might cross more walls than the former).
\end{itemize}
We therefore obtain a Hopf algebra~$\b{k}\f{D}^{< k}$ on the classes of all lattice congruences~$\c{A}_{\north, \south, \east, \west}^{< k}$ simultaneously.
Moreover, as observed in Proposition~\ref{prop:decorationSubset} any subset of~$\f{X}$ stable by concatenation and selection provides a Hopf subalgebra~$\b{k}\f{D}^{< k}$.
In particular, $\b{k}\f{D}^{< 1}$ contains \defn{simultaneously} Hopf subalgebras on permutations~\cite{MalvenutoReutenauer}, binary trees~\cite{LodayRonco, HivertNovelliThibon-algebraBinarySearchTrees}, binary sequences~\cite{GelfandKrobLascouxLeclercRetakhThibon}, Cambrian trees~\cite{ChatelPilaud}, permutrees~\cite{PilaudPons-permutrees}, and diagonal rectangulations~\cite{LawReading, Giraudo}, while~$\b{k}\f{D}^{< k}$ contains subalgebras on $k$-twists~\cite{Pilaud-brickAlgebra} and on $k$-descent schemes~\cite{NovelliReutenauerThibon, Pilaud-brickAlgebra}.
Finally, one could also mix conditions on the crossing numbers with walls of different colors using \mbox{Proposition~\ref{prop:decorationProduct}}.


\subsection{All arc diagrams}

To conclude, we define a Hopf algebra~$\b{k}\f{D}^\star$ simultaneously involving the classes of all lattice congruences of the weak order, and containing the permutree algebra.

\subsubsection{Extended arcs}

We call \defn{extended arc} any quadruple~$(a, b, n, S)$ where~$a,b,n \in \N$ are such that~$0 \le a < b \le n+1$, and~${S \subseteq {]a,b[}}$.
In other words, extended arcs are precisely like arcs but they are allowed to be attached before~$1$ and after~$n$.
We represent extended arcs exactly in the same way as arcs (but the points $0$ and~$n+1$ are colored white).
We denote by~$\c{A}_n^\star \eqdef \set{(a, b, n, S)}{0 \le a < b \le n+1 \text{ and } S \subseteq {]a,b[}}$ the set of all extended arcs.
An extended arc~$(a, b, n, S)$ is \defn{initial} if~$a = 0$, \defn{terminal} if~$b = n+1$, and \defn{strict} otherwise.
The notions of crossing and forcing, as well as the operations~$\alpha^{+n}$ and~$\alpha^{\rightarrow n}$, are defined as for classical~arcs.
We denote by~$\f{I}_n^\star$ the set of extended arc ideals (\ie upper ideals of the forcing order~$\prec$ on~$\c{A}_n^\star$).

We call \defn{juxtaposition}~$\juxta{\alpha}{\beta}$ of two extended arcs~$\alpha \eqdef (a, b, n, R)$ and~$\beta \eqdef (c, d, n, S)$ the arc
\[
\juxta{\alpha}{\beta} \eqdef
\begin{cases}
	\{(a, d, n, R \cup S)\} & \text{if~$c = b + 1$,} \\
	\varnothing & \text{otherwise.}
\end{cases}
\]
In other words, $\alpha\beta$ is obtained by joining $\alpha$ to~$\beta$ when the final endpoint of~$\alpha$ appears just after the initial endpoint of~$\beta$, and is empty otherwise.
Note that the juxtaposition is associative: for $\alpha \eqdef (a, b, n, R)$, $\beta \eqdef (c, d, n, S)$, and~$\gamma \eqdef (e, f, n, T)$ we have~$\juxta{(\juxta{\alpha}{\beta})}{\gamma} = \juxta{\alpha}{(\juxta{\beta}{\gamma})} = \{(a, f, n, R \cup S \cup T)\}$ if~$c = b + 1$ and~$e = d + 1$, and~$\juxta{(\juxta{\alpha}{\beta})}{\gamma} = \juxta{\alpha}{(\juxta{\beta}{\gamma})} = \varnothing$ otherwise.
For~$\c{I}, \c{J} \subseteq \c{A}_n^\star$, we define the \defn{juxtaposition}~$\juxta{\c{I}}{\c{J}}$~by
\[
\juxta{\c{I}}{\c{J}} \eqdef \c{I} \cup \c{J} \cup \bigcup_{\substack{\alpha \in \c{I} \\ \beta \in \c{J}}} \juxta{\alpha}{\beta}
\]
Observe that if~$\c{I}$ and~$\c{J}$ are both extended arc ideals, then~$\juxta{\c{I}}{\c{J}}$ is also an extended arc ideal.
Note again that the juxtaposition is associative: $\juxta{(\juxta{\c{I}}{\c{J}})}{\c{K}} = \juxta{\c{I}}{(\juxta{\c{J}}{\c{K}})} = \c{I} \cup \c{J} \cup \bigcup_{\substack{\alpha \in \c{I} \\ \beta \in \c{J} \\ \gamma \in \c{K}}} \big( \juxta{\alpha}{\beta} \cup \juxta{\beta}{\gamma} \cup \juxta{\juxta{\alpha}{\beta}}{\gamma} \big)$.

We define the \defn{concatenation} of two extended arc ideals~$\c{I} \subseteq \c{A}_m^\star$ and~$\c{J} \subseteq \c{A}_n^\star$ by
\[
\concat{\c{I}}{\c{J}} \eqdef \juxta{\c{I}^{+n}}{\c{J}^{\rightarrow m}}.
\]
Graphically, $\concat{\c{I}}{\c{J}}$ is obtained by juxtaposing~$\c{I}$ and~$\c{J}$ such that the~$n$\,---\,$(n+1)$ edge of~$\c{I}$ coincides with the~$0$\,---\,$1$ edge of~$\c{J}$, and joining all final arcs of~$\c{I}$ with all initial arcs of~$\c{J}$.
See \fref{fig:arcConcatenation} for an illustration.

\begin{figure}[h]
	\capstart
	\centerline{\includegraphics[scale=1]{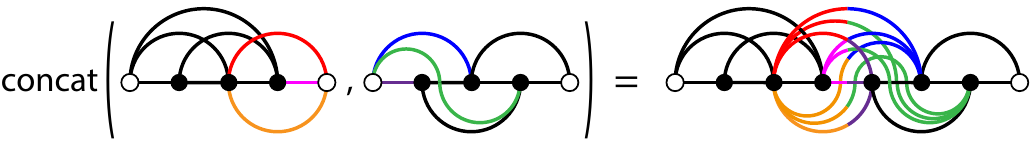}}
	\caption{The concatenation of two extended arc ideals.}
	\label{fig:arcConcatenation}
\end{figure}

\begin{lemma}
\label{lem:concatenation}
The concatenation~$\concat{\c{I}}{\c{J}}$ of two extended arc ideals~$\c{I} \subseteq \c{A}_m^\star$ and~$\c{J} \subseteq \c{A}_n^\star$ is an arc ideal of~$\c{A}_{n+m}^\star$.
\end{lemma}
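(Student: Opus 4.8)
The plan is to factor the statement into two independent facts and then combine them. Since $\concat{\c{I}}{\c{J}} = \juxta{\c{I}^{+n}}{\c{J}^{\rightarrow m}}$ by definition, it suffices to establish: (a) augmentation and shift send extended arc ideals to extended arc ideals of the larger ground set, and (b) the juxtaposition of two extended arc ideals of $\c{A}_{m+n}^\star$ is again an extended arc ideal (this is precisely the observation recorded just above). Fact (a) I would dispatch immediately, since forcing commutes with these operations: restricting $(a,b,m+n,S)=\alpha^{+n}$ to a subinterval $[b',c']\subseteq[a,b]$ yields exactly the augmentation of the corresponding restriction of $\alpha$ in $\c{A}_m^\star$, and dually for the shift. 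As $\c{I}$ and $\c{J}$ are closed under restriction, so are $\c{I}^{+n}$ and $\c{J}^{\rightarrow m}$; the only bookkeeping is to track the endpoint bounds (right endpoint $\le m+1$ for augmented arcs, left endpoint $\ge m$ for shifted arcs) so that restricted arcs land in the correct piece.

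The heart of the argument is fact (b). Writing $\c{K}=\c{I}^{+n}$ and $\c{L}=\c{J}^{\rightarrow m}$, every arc of $\juxta{\c{K}}{\c{L}}$ is of one of three types: a member of $\c{K}$, a member of $\c{L}$, or a genuine juxtaposition $\gamma=\juxta{\alpha}{\beta}=(a,d,m+n,R\cup S)$ with $\alpha=(a,b,m+n,R)\in\c{K}$ and $\beta=(c,d,m+n,S)\in\c{L}$ meeting at the gluing position. For the first two types, closure under the forcing order $\prec$ is inherited directly from $\c{K}$ and $\c{L}$. For a juxtaposition $\gamma$, I would take an arbitrary restriction $(b',c',m+n,(R\cup S)\cap{]b',c'[})$ and split according to the position of $[b',c']$ relative to the gluing site: a restriction lying wholly on the left is a restriction of $\alpha$, hence in $\c{K}$; one lying wholly on the right is a restriction of $\beta$, hence in $\c{L}$; and one that straddles the site factors as $\juxta{\alpha'}{\beta'}$, where $\alpha'$ and $\beta'$ are the induced restrictions of $\alpha$ and $\beta$. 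Two small computations make this work: because $R\subseteq{]a,b[}$ and $S\subseteq{]c,d[}$ occupy disjoint zones, the $S$-set splits cleanly as $(R\cap{]b',c'[})\cup(S\cap{]b',c'[})$, and the restricted arcs $\alpha',\beta'$ again meet at the gluing position, so their juxtaposition is defined and reproduces the restriction.

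I expect the main obstacle to be exactly the straddling case, and within it the degenerate situations where the restriction terminates at the gluing site, so that one of $\alpha',\beta'$ collapses to a single point and the naive factorization breaks. There the restriction reduces to a short arc sitting at the boundary between the two zones, and showing it already lies in $\juxta{\c{K}}{\c{L}}$ forces one to invoke that $\c{K}$ and $\c{L}$ are \emph{themselves} ideals: the boundary arc is then recognized as a surviving restriction of $\alpha$ inside $\c{K}$ (or of $\beta$ inside $\c{L}$). Once this case is handled, closure under $\prec$ holds for all three types, so $\juxta{\c{K}}{\c{L}}$ is an upper ideal of the forcing order; specializing to $\c{K}=\c{I}^{+n}$, $\c{L}=\c{J}^{\rightarrow m}$ gives that $\concat{\c{I}}{\c{J}}$ is an extended arc ideal of $\c{A}_{m+n}^\star$, as claimed. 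As a cleaner alternative that sidesteps the boundary bookkeeping, one could instead identify $\concat{\c{I}}{\c{J}}$ with the arc set $\c{I}_\equiv$ of the product congruence $\equiv$ on $\fS_{m+n}$ built from the two given congruences; being a lattice congruence, $\equiv$ has an upper-ideal arc set by Theorem~\ref{thm:arcIdeals}, which would yield the result at once.
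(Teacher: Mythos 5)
Your proof is correct and is essentially the paper's own argument with the details filled in: the paper's one-line proof invokes precisely your facts (a) and (b) --- closure of $\c{I}^{+n}$ and $\c{J}^{\rightarrow m}$ under forcing, together with the observation (stated without proof just before the lemma) that a juxtaposition of extended arc ideals is again an extended arc ideal --- and your three-case analysis of restrictions of a juxtaposed arc (wholly left, wholly right, straddling) is exactly the verification that observation requires. One caution: your resolution of the degenerate straddling case (and indeed the truth of the lemma) hinges on reading the juxtaposition condition as in the paper's verbal description, where the final endpoint of $\alpha$ is one \emph{more} than the initial endpoint of $\beta$, so that a restriction stopping at the junction is an honest restriction of $\alpha$ or of $\beta$; with the displayed condition $c=b+1$ taken literally there is a gap between the two arcs and the claim fails --- for $\c{K}=\{(0,1,2,\varnothing)\}$ and $\c{L}=\{(2,3,2,\varnothing)\}$ in $\c{A}_2^\star$, the juxtaposed arc $(0,3,2,\varnothing)$ has the restriction $(1,2,2,\varnothing)$, which lies in neither $\c{K}$ nor $\c{L}$ nor any juxtaposition.
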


\begin{proof}
The set~$\concat{\c{I}}{\c{J}}$ is contained in~$\c{A}_{m+n}^\star$ by definition and closed by forcing since both~$\c{I}^{+n}$ and~$\c{J}^{\rightarrow m}$~are.
\end{proof}

\begin{lemma}
\label{lem:concatenationAssociative}
For any arc ideals~$\c{I}, \c{J}, \c{K}$, we have $\concat{\concat{\c{I}}{\c{J}}}{\c{K}} = \concat{\c{I}}{\concat{\c{J}}{\c{K}}}$.
\end{lemma}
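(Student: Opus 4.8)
The plan is to reduce the associativity of the concatenation $\concat{\cdot}{\cdot}$ to the associativity of the juxtaposition $\juxta{\cdot}{\cdot}$ of extended arc ideals, which has already been recorded above. Fix extended arc ideals $\c{I} \subseteq \c{A}_m^\star$, $\c{J} \subseteq \c{A}_n^\star$, and $\c{K} \subseteq \c{A}_p^\star$; after all augmentations and shifts are performed, both sides of the claimed identity are subsets of $\c{A}_{m+n+p}^\star$, so the set-level associativity of $\juxta{\cdot}{\cdot}$ will apply to them. The whole argument then amounts to bookkeeping about how the augmentation $\cdot^{+n}$ and the shift $\cdot^{\rightarrow n}$ interact with one another and with the juxtaposition.

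First I would record the following elementary compatibilities, checked on a single extended arc $\alpha = (a, b, n, S)$ and then lifted to sets termwise. The augmentation and the shift are each additive, $(\alpha^{+n})^{+p} = \alpha^{+(n+p)}$ and $(\alpha^{\rightarrow m})^{\rightarrow p} = \alpha^{\rightarrow(m+p)}$; they commute, $(\alpha^{\rightarrow m})^{+p} = (\alpha^{+p})^{\rightarrow m}$, both sides being equal to $(a+m, b+m, n+p+m, \set{s+m}{s \in S})$; and each distributes over the juxtaposition of arcs, $(\juxta{\alpha}{\beta})^{+p} = \juxta{\alpha^{+p}}{\beta^{+p}}$ and $(\juxta{\alpha}{\beta})^{\rightarrow m} = \juxta{\alpha^{\rightarrow m}}{\beta^{\rightarrow m}}$. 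The distributivity holds because neither operation alters the adjacency condition that governs when a juxtaposition of two arcs is nonempty (augmentation fixes both endpoints, and the shift translates them by the same amount). Lifting these identities to sets arc-by-arc yields $(\juxta{\c{I}}{\c{J}})^{+p} = \juxta{\c{I}^{+p}}{\c{J}^{+p}}$ and $(\juxta{\c{I}}{\c{J}})^{\rightarrow m} = \juxta{\c{I}^{\rightarrow m}}{\c{J}^{\rightarrow m}}$.

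With these in hand, I would expand both sides using only the definition $\concat{\c{I}}{\c{J}} = \juxta{\c{I}^{+n}}{\c{J}^{\rightarrow m}}$ and the compatibilities above. Set $A \eqdef \c{I}^{+(n+p)}$, $B \eqdef (\c{J}^{+p})^{\rightarrow m}$, and $C \eqdef \c{K}^{\rightarrow(m+n)}$, all three subsets of $\c{A}_{m+n+p}^\star$. On the left, distributivity of $\cdot^{+p}$ followed by additivity and commutation gives $(\concat{\c{I}}{\c{J}})^{+p} = \juxta{A}{B}$, whence $\concat{\concat{\c{I}}{\c{J}}}{\c{K}} = \juxta{(\juxta{A}{B})}{C}$. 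On the right, distributivity of $\cdot^{\rightarrow m}$ followed by additivity of the shift gives $(\concat{\c{J}}{\c{K}})^{\rightarrow m} = \juxta{B}{C}$, whence $\concat{\c{I}}{\concat{\c{J}}{\c{K}}} = \juxta{A}{(\juxta{B}{C})}$. The two expressions coincide by the associativity of the juxtaposition of extended arc ideals over the common ground set $[m+n+p]$. I do not anticipate a genuine obstacle: the only points requiring care are the commutation identity $(\c{J}^{\rightarrow m})^{+p} = (\c{J}^{+p})^{\rightarrow m}$ and the verification that, once simplified, all three factors $A, B, C$ indeed lie in $\c{A}_{m+n+p}^\star$, so that the set-level associativity of $\juxta{\cdot}{\cdot}$ legitimately applies.
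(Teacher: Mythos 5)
Your proof is correct and takes essentially the same route as the paper: both expand $\concat{\concat{\c{I}}{\c{J}}}{\c{K}}$ and $\concat{\c{I}}{\concat{\c{J}}{\c{K}}}$ via the definition $\concat{\c{I}}{\c{J}} = \juxta{\c{I}^{+n}}{\c{J}^{\rightarrow m}}$ and reduce the claim to the associativity of the juxtaposition, using the additivity and commutation of the operations $\alpha^{+p}$ and $\alpha^{\rightarrow m}$ and their distributivity over juxtaposition. The paper compresses this into a two-line computation that leaves those compatibilities implicit, whereas you state and verify them explicitly; the content is identical.
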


\begin{proof}
Assume that~$\c{I} \subseteq \c{A}_m^\star$, $\c{J} \subseteq \c{A}_n^\star$ and~$\c{K} \subseteq \c{A}_p^\star$.
Using the associativity of the juxtaposition,
\begin{align*}
\concat{\concat{\c{I}}{\c{J}}}{\c{K}}
& = (\c{I}^{+n}\c{J}^{\rightarrow m})^{+p}\c{K}^{\rightarrow (m+n)} = \c{I}^{+(n+p)}(\c{J}^{\rightarrow m})^{+p}\c{K}^{\rightarrow (m+n)} \\
& = \c{I}^{+(n+p)}(\c{J}^{+p}\c{K}^{\rightarrow n})^{\rightarrow m} = \concat{\c{I}}{\concat{\c{J}}{\c{K}}}. \qedhere
\end{align*}
\end{proof}

Consider now an extended arc ideal~$\c{K} \subseteq \c{A}_p^\star$ and a subset~$X \eqdef \{x_1 < \dots < x_q\}$ of~$[p]$. Define by convention~$x_0 \eqdef 0$ and~$x_{q+1} \eqdef p+1$.
We define the \defn{selection} of~$\c{K}$ at~$X$ by
\[
\select{\c{K}}{X} \eqdef \set{(a, b, q, S)}{\begin{array}{@{}l@{}} \exists \; y_0 < \dots < y_r \in [p] \text{ with } x_a = y_0 \text{ and } x_b = y_r \text{ while } y_1, \dots, y_{r-1} \notin X \\ \exists \; (y_0, y_1, p, S_1), \dots, (y_{r-1}, y_r, p, S_r) \in \c{K} \text{ with } S = \set{\ell \in [q]}{x_\ell \in \bigcup S_k}\end{array}}.
\]
Graphically, $\select{\c{K}}{X}$ is obtained by considering all arcs obtained by merging $x$-monotone paths in~$\c{K}$ with endpoints in~$\{0\} \cup X \cup \{p+1\}$ but all interior points in~$[p] \ssm X$, deleting all points of~$[p] \ssm X$, and packing the remaining points of~$\{0\} \cup X \cup \{p+1\}$ (together with the merged arcs) to the left towards their standard position~$0, 1, \dots, q, q+1$.
See \fref{fig:arcSelection} for an illustration.

\begin{figure}[h]
	\capstart
	\centerline{\includegraphics[scale=1]{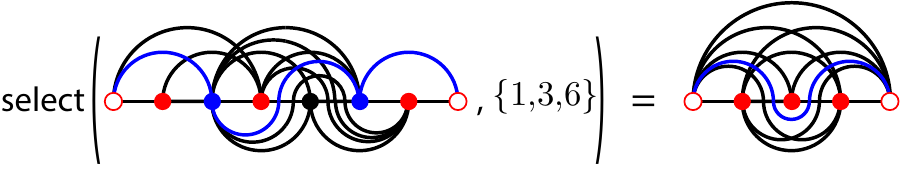}}
	\caption{The selection in an arc ideal. Selected points are in red. The blue extended arc~$(0,4,3,\{1,3\})$ in~$\select{\c{K}}{\{1,3,6\}}$ arises from the concatenation of the three blue extended arcs~$(0,2,6,\{1\})$, $(2,5,6,\{4\})$ and~$(5,7,6,\{6\})$ in~$\c{K}$.}
	\label{fig:arcSelection}
\end{figure}

\begin{lemma}
\label{lem:selection}
The selection~$\select{\c{K}}{X}$ of an extended arc ideal~$\c{K} \subseteq \c{A}_p^\star$ on a $q$-element subset~$X$ of~$[p]$ is an extended arc ideal of~$\c{A}_q^\star$.
\end{lemma}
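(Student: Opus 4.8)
The statement asserts two things: that every element of $\select{\c{K}}{X}$ is a genuine extended arc of $\c{A}_q^\star$, and that $\select{\c{K}}{X}$ is closed upwards under the forcing order $\prec$. The first point I would dispatch immediately. If $(a, b, q, S) \in \select{\c{K}}{X}$ is witnessed by a path $x_a = y_0 < \dots < y_r = x_b$ with over-sets $S_1, \dots, S_r$, then $\bigcup_k S_k \subseteq {]y_0, y_r[} = {]x_a, x_b[}$, so $x_\ell \in \bigcup_k S_k$ forces $a < \ell < b$, whence $S \subseteq {]a,b[}$; moreover $x_a < x_b$ gives $a < b$, and $0 \le a$, $b \le q+1$ hold by construction. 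So the real content is closure under forcing.

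For that, suppose $(a, d, q, S) \in \select{\c{K}}{X}$ is witnessed by $x_a = y_0 < \dots < y_r = x_d$ with arcs $(y_{k-1}, y_k, p, S_k) \in \c{K}$ and $S = \set{\ell \in [q]}{x_\ell \in \bigcup_k S_k}$, and fix $a \le b < c \le d$; I must produce a witness for $(b, c, q, S \cap {]b,c[})$. The geometric idea is that restricting a selected (merged) arc to $[x_b, x_c]$ amounts to restricting the underlying path of $\c{K}$-arcs to $[x_b, x_c]$. I first locate $x_b$ and $x_c$ on the path: every interior vertex $y_1, \dots, y_{r-1}$ lies in $[p] \ssm X$, while $x_b, x_c \in \{0\} \cup X \cup \{p+1\}$, so each of $x_b, x_c$ is either an endpoint $y_0, y_r$ or lies strictly inside a unique segment $(y_{i-1}, y_i)$. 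Say $x_b$ sits in segment $i$ and $x_c$ in segment $j$ with $i \le j$.

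I then build the restricted path $x_b, y_i, \dots, y_{j-1}, x_c$ whose arcs are the truncated first arc $(x_b, y_i, p, S_i \cap {]x_b, y_i[})$, the untouched middle arcs $(y_i, y_{i+1}, p, S_{i+1}), \dots, (y_{j-2}, y_{j-1}, p, S_{j-1})$, and the truncated last arc $(y_{j-1}, x_c, p, S_j \cap {]y_{j-1}, x_c[})$, with the obvious simplifications when $x_b = y_0$ (namely $b=a$), when $x_c = y_r$ (namely $c=d$), or when $i = j$. The two truncated arcs lie in $\c{K}$ precisely because $\c{K}$ is closed under forcing and they are restrictions of $(y_{i-1}, y_i, p, S_i)$ and $(y_{j-1}, y_j, p, S_j)$; the middle arcs lie in $\c{K}$ by hypothesis. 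All interior vertices of the new path are among $y_i, \dots, y_{j-1} \subseteq \{y_1, \dots, y_{r-1}\}$, hence avoid $X$, so this is a legitimate witness. Finally I check the over-set: since $S_k \subseteq {]y_{k-1}, y_k[}$, the segments lying entirely left of $x_b$ or right of $x_c$ contribute nothing to ${]x_b, x_c[}$, the strictly interior segments contribute fully, and only the two boundary segments need truncation, so the union of the new over-sets equals $(\bigcup_k S_k) \cap {]x_b, x_c[}$; the resulting selected over-set is then $\set{\ell}{x_\ell \in \bigcup_k S_k \text{ and } x_b < x_\ell < x_c} = S \cap {]b,c[}$, as required.

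The only genuine obstacle is bookkeeping: handling the degenerate cases ($b=a$, $c=d$, $i=j$, and initial/terminal arcs with endpoints $0$ or $p+1$) uniformly, and verifying that truncating the extreme segments neither loses nor gains points of $\bigcup_k S_k$ inside ${]x_b, x_c[}$. Both reduce to the elementary inclusion $S_k \subseteq {]y_{k-1}, y_k[}$, so I expect no conceptual difficulty beyond this careful accounting.
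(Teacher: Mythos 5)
Your proof is correct and takes essentially the same approach as the paper's: your segment indices $i \le j$ for the positions of $x_b$ and $x_c$ are the paper's $u \eqdef \min\set{s \in [r]}{x_b < y_s}$ and $v \eqdef \max\set{s \in [r]}{y_{s-1} < x_c}$, and your restricted path---two boundary arcs truncated at $x_b$ and $x_c$ (in $\c{K}$ by forcing-closure) with the middle arcs kept intact---is exactly the paper's path $\alpha_u', \alpha_{u+1}, \dots, \alpha_{v-1}, \alpha_v'$, with your case $i=j$ matching the paper's case $u=v$. The only difference is one of detail: you explicitly verify membership in $\c{A}_q^\star$, the over-set identity giving $S \cap {]b,c[}$, and the degenerate endpoint cases, all of which the paper leaves implicit.
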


\begin{proof}
Let~$\c{I} \eqdef \select{\c{K}}{X}$.
We have~$\c{I} \subseteq \c{A}_q^\star$ by definition.
To show that~$\c{I}$ is closed by forcing, assume that an arc~$(a, d, q, S) \in \c{I}$ is forced by an arc~$(b, c, q, T)$, so that~${a \le b < c \le d}$ and~${T = S \cap {]b, c[}}$.
Consider a path of arcs~$\alpha_1 \eqdef (y_0, y_1, p, S_1), \dots, \alpha_r \eqdef (y_{r-1}, y_r, p, S_r)$ of~$\c{K}$ corresponding to~$\c{I}$.
Let~$u \eqdef \min\set{s \in [r]}{x_b < y_s}$ and~$v \eqdef \max\set{s \in [r]}{y_{s-1} < x_c}$.
If~$u = v$, then~$(x_b, x_c, p, S_u \cap {]x_b, x_c[})$ belongs to~$\c{K}$ (since it is closed by forcing), thus~$(b, c, q, T)$ belongs to~$\c{I}$.
If~$u < v$, then both~$\alpha_u' \eqdef (x_b, y_u, p, S_u \cap {]x_b, y_u[})$ and ${\alpha_v' \eqdef (y_{v-1}, x_c, S_v \cap {]y_{v-1}, x_c[})}$ belong to~$\c{K}$ (since it is closed by forcing).
The path of arcs~$\alpha_u', \alpha_{u+1}, \dots, \alpha_{v-1}, \alpha_v'$ thus ensures that~$(b, c, q, T)$ belongs to~$\c{I}$ as well.
\end{proof}

\begin{lemma}
\label{lem:selectionCoassociative}
For any extended arc ideal~$\c{K} \subseteq \c{A}_p^\star$, any subset~$X \eqdef \{x_1, \dots, x_q\}$ of~$[p]$ and any subset~$Y$ of~$[q]$, we have~$\select{\select{\c{K}}{X}}{Y} = \select{\c{K}}{\set{x_y}{y \in Y}}$.
\end{lemma}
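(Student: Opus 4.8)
The plan is to prove the identity by double inclusion. Write $Z \eqdef \set{x_y}{y \in Y}$ and $\c{I} \eqdef \select{\c{K}}{X}$, so that $\c{I} \subseteq \c{A}_q^\star$ and the two sides to be compared are~$\select{\c{I}}{Y}$ and~$\select{\c{K}}{Z}$. The whole argument rests on a single structural observation: the $x$-monotone paths of arcs of~$\c{I}$ are precisely the paths of arcs of~$\c{K}$ cut at the points of~$X$, so that selecting through~$X$ and then through~$Y$ amounts to merging paths of arcs of~$\c{K}$ whose interior points avoid~$Z$, which is exactly what $\select{\c{K}}{Z}$ computes.

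First I would treat the inclusion $\select{\c{I}}{Y} \subseteq \select{\c{K}}{Z}$ by concatenation. An arc of~$\select{\c{I}}{Y}$ arises from a path $z_0 < \dots < z_s$ in~$[q]$, with $z_0 = y_a$ and $z_s = y_b$, interior points $z_1, \dots, z_{s-1} \notin Y$, and edges $(z_{i-1}, z_i, q, T_i) \in \c{I}$. Each such edge, lying in $\select{\c{K}}{X}$, is itself produced by a path of arcs of~$\c{K}$ from~$x_{z_{i-1}}$ to~$x_{z_i}$ whose interior points avoid~$X$. Concatenating these $s$ paths produces one path of arcs of~$\c{K}$ from~$x_{z_0}$ to~$x_{z_s}$: each of its interior points is either interior to one of the sub-paths (hence outside~$X$, and a fortiori outside $Z \subseteq X$) or one of the junction points $x_{z_1}, \dots, x_{z_{s-1}}$ (which lie outside~$Z$ precisely because $z_1, \dots, z_{s-1} \notin Y$). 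This is a valid path producing an arc of~$\select{\c{K}}{Z}$, whose endpoints $x_{z_0} = x_{y_a}$ and $x_{z_s} = x_{y_b}$ are the $a$-th and $b$-th elements of~$Z$, matching the index pair of the original arc.

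Then I would establish the reverse inclusion $\select{\c{K}}{Z} \subseteq \select{\c{I}}{Y}$ by splitting instead of concatenating. An arc of~$\select{\c{K}}{Z}$ comes from a path of arcs of~$\c{K}$ from~$x_{y_a}$ to~$x_{y_b}$ whose interior points avoid~$Z$. I would cut this path at all of its vertices lying in~$X$: the two endpoints already lie in~$X$, and every interior cut lies in $X \ssm Z$. Each resulting segment has its interior points outside~$X$, hence contributes an arc of~$\c{I} = \select{\c{K}}{X}$; stringing these arcs together yields a path in~$\c{I}$ from index~$y_a$ to index~$y_b$ whose interior vertices carry $\c{I}$-indices in $[q] \ssm Y$ (since they correspond to points of $X \ssm Z$). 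This is exactly a path producing the corresponding arc of~$\select{\c{I}}{Y}$.

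In both directions the essential, if routine, point to verify is that the decoration set~$S$ is preserved: an index~$\ell$ belongs to the selected decoration on either side exactly when $x_{y_\ell}$ lies in the union of the decoration sets of the arcs of the underlying $\c{K}$-path, because taking this union commutes with the two-step relabeling $[p] \to [q] \to [t]$ (where $t \eqdef |Y| = |Z|$). I expect the main obstacle to be purely bookkeeping: one must keep three layers of indices consistent\,---\,the ground set~$[p]$ of~$\c{K}$, the relabeled set~$[q]$ of~$\c{I}$, and the set~$[t]$ after the second selection\,---\,and check carefully that the junction points created when concatenating (respectively the cut points created when splitting) are exactly the vertices of~$X$ not in~$Z$, which is what makes the two path-descriptions coincide.
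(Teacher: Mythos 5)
Your proof is correct and follows essentially the same route as the paper: the paper's own proof is a one-sentence assertion that both $\select{\select{\c{K}}{X}}{Y}$ and $\select{\c{K}}{\set{x_y}{y \in Y}}$ are obtained by merging the paths of arcs of~$\c{K}$ with endpoints in~$\{0\} \cup \set{x_y}{y \in Y} \cup \{p+1\}$ and interior vertices outside~$\set{x_y}{y \in Y}$, and your two inclusions (concatenating the $\c{K}$-paths underlying the arcs of $\select{\c{K}}{X}$, respectively cutting a $\c{K}$-path at its vertices in~$X \ssm \set{x_y}{y \in Y}$) are precisely the verification of that assertion, spelled out with the index bookkeeping the paper leaves implicit.
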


\begin{proof}
By definition, both~$\select{\select{\c{K}}{X}}{Y}$ and~$\select{\c{K}}{\set{x_y}{y \in Y}}$ are obtained by merging all paths in~$\c{K}$ whose endpoints are in~$\{0\} \cup \set{x_y}{y \in Y} \cup \{p+1\}$ but whose interior vertices are all in~$[p] \ssm \set{x_y}{y \in Y}$, and packing the remaining points of~$\{0\} \cup \set{x_y}{y \in Y} \cup \{p+1\}$ to their standard position~$0, 1, \dots, |Y|, |Y|+1$.
\end{proof}

\begin{proposition}
\label{prop:concatenationStandardization}
For any arc ideals~$\c{I} \subseteq \c{A}_m^\star$ and~$\c{J} \subseteq \c{A}_n^\star$ and any subsets~$X \subseteq [m]$ and~$Y \subseteq [n]$,
\[
\concat{\select{\c{I}}{X}}{\select{\c{J}}{Y}} = \select{\concat{\c{I}}{\c{J}}}{X \cup Y^{\rightarrow m}}
\]
where~$Y^{\rightarrow m} \eqdef \set{y+m}{y \in Y}$.
\end{proposition}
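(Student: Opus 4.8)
The plan is to establish the equality by matching the $x$-monotone paths that generate the two sides. Write $q \eqdef |X|$ and $r \eqdef |Y|$. By Lemmas~\ref{lem:concatenation} and~\ref{lem:selection} both sides are extended arc ideals of~$\c{A}_{q+r}^\star$, and I would first observe that the two repackings of points already coincide: on the right, $\selectf$ sorts $X \cup Y^{\rightarrow m}$ as $x_1 < \dots < x_q < y_1+m < \dots < y_r+m$ and sends these to $1, \dots, q, q+1, \dots, q+r$; on the left, $\select{\c{I}}{X}$ repacks $X$ to $1, \dots, q$ and $\select{\c{J}}{Y}$ repacks $Y$ to $1, \dots, r$, after which $\concatf$ shifts the latter to $q+1, \dots, q+r$. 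So it only remains to check that the two constructions create the same arcs between these repacked positions.

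Next I would recall the two graphical recipes. By \fref{fig:arcSelection}, $\select{\c{K}}{Z}$ is obtained by merging every $x$-monotone path of arcs of~$\c{K}$ whose two endpoints lie in $\{0\} \cup Z \cup \{p+1\}$ and whose interior nodes avoid~$Z$. By \fref{fig:arcConcatenation}, $\concat{\c{I}}{\c{J}} = \juxta{\c{I}^{+n}}{\c{J}^{\rightarrow m}}$ consists of the arcs internal to~$\c{I}$, the arcs internal to~$\c{J}$, and the juxtapositions of a terminal arc of~$\c{I}$ with an initial arc of~$\c{J}$ across the seam separating positions $m$ and $m+1$. Since a selection path is strictly increasing while the seam lies between $m$ and $m+1$, every selection path of $\concat{\c{I}}{\c{J}}$ through $X \cup Y^{\rightarrow m}$ crosses the seam at most once, and I would sort these paths into the three corresponding types.

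The two types of paths not crossing the seam lie entirely in the $\c{I}$-block (all nodes in $\{0, \dots, m+1\}$) or entirely in the $\c{J}$-block; merging them yields exactly the arcs of $\select{\c{I}}{X}$ and of $\select{\c{J}}{Y}$, which are precisely the internal arcs of $\concat{\select{\c{I}}{X}}{\select{\c{J}}{Y}}$. For a path that does cross the seam I would use its unique seam-crossing arc $(a, d, m+n, T)$: by definition it is a juxtaposition of a terminal arc of~$\c{I}$, supported in $]a, m+1[$, with an initial arc of~$\c{J}$, supported in $]m, d[$, and since these two intervals are disjoint the decomposition of~$T$ — hence of the arc into its two factors — is unique. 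Splitting the path there produces an $\c{I}$-selection path ending at the terminal boundary of~$\c{I}$ and a $\c{J}$-selection path starting at the initial boundary of~$\c{J}$; these merge to a terminal arc of $\select{\c{I}}{X}$ and an initial arc of $\select{\c{J}}{Y}$, whose juxtaposition in $\concat{\select{\c{I}}{X}}{\select{\c{J}}{Y}}$ is exactly the arc obtained by merging the original path. As this assignment is manifestly reversible, it gives the desired bijection and hence the equality of the two arc sets.

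The hard part will be the bookkeeping of supports at the two shared seam positions $m$ (the last point of~$\c{I}$, which doubles as the initial boundary of~$\c{J}$) and $m+1$ (the terminal boundary of~$\c{I}$, which doubles as the first point of~$\c{J}$): one must verify that a selected point is passed above a merged arc on the right if and only if the matching point is passed above the corresponding factor on the left. The disjointness of $]a, m+1[$ and $]m, d[$ is exactly what makes this work, since it splits the support~$T$ of any seam-crossing arc unambiguously into the contribution of the terminal arc of~$\c{I}$ and that of the initial arc of~$\c{J}$, so that the repacked supports agree term by term. The remaining verifications that the recipes commute with repacking are routine, in the style of Lemmas~\ref{lem:concatenationAssociative} and~\ref{lem:selectionCoassociative}, and I would leave them to the reader.
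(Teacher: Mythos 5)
Your proof is correct and takes essentially the same route as the paper's: the paper likewise splits into three cases according to whether an arc lies entirely in the $\c{I}$-block, entirely in the $\c{J}$-block, or crosses the seam, and settles the seam-crossing case by splitting/joining the generating path at its unique juxtaposition arc, whose factorization into a terminal arc of $\c{I}$ and an initial arc of $\c{J}$ is unique. Your framing as a bijection between selection paths (plus the explicit repacking remark) is only a cosmetic variation on the paper's arc-by-arc membership argument.
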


\begin{proof}
Consider an arc~$\alpha \eqdef (a, b, m+n, S) \in \c{A}_{m+n}^\star$.
We distinguish three cases:
\begin{itemize}
\item If~$b \le m$, then~$\alpha \in \concat{\select{\c{I}}{X}}{\select{\c{J}}{Y}}$ and~${\alpha \in \select{\concat{\c{I}}{\c{J}}}{X \cup Y^{\rightarrow m}}}$ are both equivalent to~$(a, b, m, S) \in \select{\c{I}}{X}$.
\item If~$m < a$, then~$\alpha \in \concat{\select{\c{I}}{X}}{\select{\c{J}}{Y}}$  and~${\alpha \in \select{\concat{\c{I}}{\c{J}}}{X \cup Y^{\rightarrow m}}}$ are both equivalent to~$(a-m, b-m, n, \set{s-m}{s \in S}) \in \select{\c{J}}{Y}$.
\item Finally, assume that~$a \le m < b$. If~$\alpha \in \concat{\select{\c{I}}{X}}{\select{\c{J}}{Y}}$, then there exists a final arc~$\beta \in \select{\c{I}}{X}$ and an initial arc~$\gamma \in \select{\c{J}}{Y}$ such that~$\{\alpha\} = \juxta{\beta^{+n}}{\gamma^{\rightarrow m}}$. The arc~$\beta \in \select{\c{I}}{X}$ corresponds to a path of arcs~$\beta_0, \dots, \beta_r$ in~$\c{I}$ whose interior points all belong to~$[m] \ssm X$, and similarly the arc~$\gamma \in \select{\c{J}}{Y}$ corresponds to a path of arcs~$\gamma_0, \dots, \gamma_s$ in~$\c{J}$ whose interior points all belong to~$[n] \ssm Y$. The path $\beta_0^{+n}, \dots, \beta_{r-1}^{+n}, \juxta{\beta_r^{+n}}{\gamma_0^{\rightarrow m}}, \gamma_1^{\rightarrow m}, \dots, \gamma_s^{\rightarrow m}$ thus shows that ${\alpha \in \select{\concat{\c{I}}{\c{J}}}{X \cup Y^{\rightarrow m}}}$.
Conversely, if~$\alpha \in \select{\concat{\c{I}}{\c{J}}}{X \cup Y^{\rightarrow m}}$, it corresponds to a path of arcs~$\alpha_1, \dots, \alpha_r$ in~$\concat{\c{I}}{\c{J}}$ whose interior points all belong to~$X \cup Y^{\rightarrow m}$. Since~$a \le m < b$, there is~$s \in [r]$ such that~$\alpha_s = (c, d, m+n, T)$ with~$c \le m < d$. Let~$\beta_1, \dots, \beta_s$ in~$\c{I}$ and~$\gamma_s, \dots, \gamma_r$ in~$\c{J}$ be such that~$\alpha_s = \juxta{\beta_s^{+n}}{\gamma_s^{\rightarrow m}}$ and~$\alpha_i = \beta_i^{+n}$ if~$i < s$ while $\alpha_i = \gamma_i^{\rightarrow m}$ if~$i > s$. Then the paths~$\beta_1, \dots, \beta_s$ in~$\c{I}$ and~$\gamma_s, \dots, \gamma_r$ in~$\c{J}$ ensure the existence of~$\beta \in \select{\c{I}}{X}$ and~$\gamma \in \select{\c{J}}{Y}$ such that~$\alpha = \juxta{\beta^{+n}}{\gamma^{\rightarrow m}} \in \concat{\select{\c{I}}{X}}{\select{\c{J}}{Y}}$.
\qedhere
\end{itemize}
\end{proof}

We have therefore proved the following statement.

\begin{corollary}
The set~$\f{I}^\star \eqdef \bigsqcup_{n \in \N} \f{I}_n^\star$ of all extended arcs ideals, endowed with the concatenation~$\concatf$ and selection~$\selectf$, is a decoration set.
\end{corollary}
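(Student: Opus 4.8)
The plan is to verify directly that the data $(\f{I}^\star, \concatf, \selectf)$ meets each requirement of Definition~\ref{def:decorationSet}, invoking the lemmas established immediately above as the workhorses. First I would confirm that the two operations have the correct signatures as graded maps. The well-definedness of concatenation, namely that $\concat{\c{I}}{\c{J}} \in \f{I}_{m+n}^\star$ for $\c{I} \in \f{I}_m^\star$ and $\c{J} \in \f{I}_n^\star$, is exactly Lemma~\ref{lem:concatenation}; the well-definedness of selection, namely that $\select{\c{K}}{X} \in \f{I}_q^\star$ for $\c{K} \in \f{I}_p^\star$ and a $q$-element subset $X \subseteq [p]$, is exactly Lemma~\ref{lem:selection}. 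This already exhibits $\concatf$ and $\selectf$ as maps of the form demanded by the definition.

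It then remains to check the three axioms. For condition~(i), the associativity of concatenation, I would simply cite Lemma~\ref{lem:concatenationAssociative}. For condition~(ii), the composition rule $\select{\select{\c{K}}{X}}{Y} = \select{\c{K}}{\set{x_y}{y \in Y}}$ governing iterated selections, I would cite Lemma~\ref{lem:selectionCoassociative}. For condition~(iii), the compatibility $\concat{\select{\c{I}}{X}}{\select{\c{J}}{Y}} = \select{\concat{\c{I}}{\c{J}}}{X \cup Y^{\rightarrow m}}$ between concatenation and selection, I would cite Proposition~\ref{prop:concatenationStandardization}. Assembling these four statements establishes all the clauses of Definition~\ref{def:decorationSet}, which completes the proof.

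I do not expect any genuine obstacle here: the corollary is a pure packaging of results already proved, and all the combinatorial difficulty has been front-loaded into the preceding lemmas (most notably the path-merging argument in Proposition~\ref{prop:concatenationStandardization}). The only point requiring minor care is matching the grading conventions across the cited statements\,---\,checking that the index $q = |X|$ in selection and the shift $Y^{\rightarrow m}$ in condition~(iii) align with the hypotheses as stated\,---\,but this bookkeeping is immediate from those hypotheses.
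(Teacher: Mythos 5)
Your proposal is correct and matches the paper exactly: the paper's own ``proof'' is the single sentence ``We have therefore proved the following statement,'' since the corollary is precisely the assembly of Lemmas~\ref{lem:concatenation}, \ref{lem:concatenationAssociative}, \ref{lem:selection}, \ref{lem:selectionCoassociative} and Proposition~\ref{prop:concatenationStandardization} against the clauses of Definition~\ref{def:decorationSet}. You have simply made that bookkeeping explicit, which is the intended reading.
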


\subsubsection{Noncrossing extended arc diagrams}

We now consider the map~$\arcs: \f{I}^\star \to \f{I}$ which sends an extended arc ideal to the arc ideal of its strict arcs.
This function is clearly conservative so that we obtain a Hopf algebra~$\b{k}\f{D}^\star$ on pairs~$(\c{D}, \c{I})$, where~$\c{I}$ is any extended arc ideal and $\c{D}$ is a noncrossing arc diagram containing only strict arcs of~$\c{I}$.
Note that~$\b{k}\f{D}^\star$ is graded but not connected as there are two extended arc ideals in~$\c{A}_0^\star$: the pair~$\big(\varnothing, \{(0, 1, 0, \varnothing)\} \big)$ is the identity of~$\b{k}\f{D}^\star$, while the pair~$(\varnothing, \varnothing)$ is a primitive idempotent of~$\b{k}\f{D}^\star$.
We could have forced connectivity by imposing all short arcs~$(i, i+1, n, \varnothing)$ for~$0 \le i \le n$ in all extended arc ideals of~$\f{J}_n^\star$.
The Hopf algebra~$\b{k}\f{D}^\star$ involves the classes of all lattice congruences of the weak order.
Moreover, the concatenation and selection on extended arc diagrams was chosen to fulfill the following statement.

\begin{proposition}
The permutree Hopf algebra is a Hopf subalgebra of~$\b{k}\f{D}^\star$.
\end{proposition}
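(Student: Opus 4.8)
\section*{Proof proposal}

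The plan is to realise the permutree decoration set inside~$\f{I}^\star$ as a sub-decoration-set and then invoke Proposition~\ref{prop:decorationSubset}. Recall from Section~\ref{subsec:boundedCrossings} that the permutree Hopf algebra is the algebra~$\b{k}\f{D}$ attached to the decoration set of words over the four permutree symbols (with word concatenation and subword selection) and to the conservative map sending a permutree decoration to its permutree arc ideal~$\c{A}_{\north,\south,0,0}^{<1}$, where~$\north,\south \le 1$ and~$\east=\west=0$ (Example~\ref{exm:boundedCrossingCongruences}). I would therefore construct an injection~$\iota$ from this decoration set into~$\f{I}^\star$, check that it commutes with the concatenation~$\concatf$ and the selection~$\selectf$, and verify that~$\arcs \circ \iota$ is exactly the permutree map. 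Proposition~\ref{prop:decorationSubset} would then identify the permutree algebra with the Hopf subalgebra of~$\b{k}\f{D}^\star$ supported on the image of~$\iota$.

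First I would define~$\iota$ on the four single-vertex symbols by choosing, inside~$\c{A}_1^\star = \{(0,1,1,\varnothing),(1,2,1,\varnothing),(0,2,1,\varnothing),(0,2,1,\{1\})\}$, the extended arc ideal that keeps both short boundary arcs~$(0,1,1,\varnothing)$ and~$(1,2,1,\varnothing)$ together with exactly the spanning arcs the symbol allows: both spanning arcs for the undecorated symbol, only the lower spanning arc~$(0,2,1,\varnothing)$ when passage above is forbidden, only the upper spanning arc~$(0,2,1,\{1\})$ when passage below is forbidden, and neither when both are forbidden; the empty word is sent to the unit~$\{(0,1,0,\varnothing)\}$. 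I would then extend~$\iota$ to words by letting~$\iota(\decoration_1 \cdots \decoration_n)$ be the iterated~$\concatf$-concatenation of the~$\iota(\decoration_i)$, which is well defined by the associativity of the concatenation (Lemma~\ref{lem:concatenationAssociative}). Commutation with~$\concatf$ then holds by construction, and commutation with~$\selectf$ reduces, via Proposition~\ref{prop:concatenationStandardization}, to the single-vertex identities~$\select{\iota(\decoration)}{\{1\}} = \iota(\decoration)$ and~$\select{\iota(\decoration)}{\varnothing} = \{(0,1,0,\varnothing)\}$: selecting the unique vertex merges no paths, whereas deleting it merges the two short boundary arcs into the unit arc. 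Injectivity is immediate, since the four symbols already receive four distinct extended arc ideals.

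The heart of the argument is the identity~$\arcs(\iota(w)) = \c{A}_{\north,\south,0,0}^{<1}$, that is, that the strict arcs of the concatenated extended arc ideal are precisely the arcs crossing none of the walls prescribed by~$w$. I would prove both inclusions by induction on the number of vertices. Every arc produced in a concatenation is either inherited from a factor (after an augmentation or a shift) or obtained as a juxtaposition of boundary arcs of the factors; in either case one checks that it passes above a vertex only when that vertex carries no upper wall and below a vertex only when it carries no lower wall, yielding~$\arcs(\iota(w)) \subseteq \c{A}_{\north,\south,0,0}^{<1}$. For the reverse inclusion I would show that any wall-free strict arc~$(a,b,n,S)$ is assembled by the concatenation: its restriction to each block lies in the corresponding single-vertex ideal (this is where the admitted spanning arcs are used), and these restrictions glue, through the juxtaposition, into the desired arc, the boundary arcs of the intermediate factors recording the partial arc as it sweeps from~$a$ to~$b$.

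I expect this reverse inclusion to be the main obstacle. The delicate point is that an arc spanning several vertices\,---\,for instance an upper arc passing above a whole run of consecutive interior vertices, as forced by an all-lower decoration\,---\,must be recovered from the boundary arcs of the factors, and one has to track precisely how the terminal arcs of a left factor accumulate the above-data of the vertices already absorbed, so that after one further augmentation they surface as the correct strict arc, while checking that no spurious strict arc is created. Making this bookkeeping precise is the crux; once it is in place, the compatibilities of~$\iota$ with~$\concatf$ and~$\selectf$ and the final appeal to Proposition~\ref{prop:decorationSubset} are routine.
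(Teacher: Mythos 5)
Your proposal is correct, and its skeleton is the same as the paper's: realize the permutree decorations as a decoration subset of~$\f{I}^\star$ on which~$\arcs$ restricts to the permutree conservative map, then invoke Proposition~\ref{prop:decorationSubset}; in fact your~$\iota$ is exactly the paper's embedding. The difference is how the embedding is presented, and it is instructive. The paper defines the image of~$(\north,\south)$ in closed form, as the ideal of \emph{all} wall-avoiding extended arcs, namely $\c{E}_{(\north,\south)} \eqdef \set{(a,b,n,S)}{\north(c)=0 \text{ for } c\in S \text{ and } \south(c)=0 \text{ for } c\in {]a,b[}\ssm S}$; with that description the identity $\arcs(\c{E}_{(\north,\south)}) = \c{A}_{\north,\south,0,0}^{<1}$ --- the step you flag as the crux --- holds by definition, and the only work left is the routine check that these ideals are stable under~$\concatf$ and~$\selectf$ and that the induced operations agree with those of Section~\ref{subsec:boundedCrossings}. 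Your generative definition instead makes $\concatf$-compatibility free but front-loads everything into proving $\arcs(\iota(w)) = \c{A}_{\north,\south,0,0}^{<1}$. That step is less delicate than you fear: prove $\iota(w) = \c{E}_w$ by induction one letter at a time. The terminal arcs of~$\c{E}_w^{+1}$ are precisely the wall-avoiding arcs reaching the junction, and juxtaposing them with the shifted single-vertex arcs $(m,m+2,m+1,\varnothing)$, $(m,m+2,m+1,\{m+1\})$ and $(m,m+1,m+1,\varnothing)$ continues them below the new vertex, above it, or stops them there, exactly as the walls of the new letter permit; conversely every arc produced by the juxtaposition is a chain of block arcs, each respecting its walls, so no spurious arc appears. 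One point you should make explicit in your $\selectf$-reduction: concatenating with the unit $\{(0,1,0,\varnothing)\}$ is the identity only on ideals containing the relevant short boundary arcs, which holds for your image ideals precisely because you kept both short arcs in each single-vertex ideal.
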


\begin{proof}
Any function~$\north, \south : [n] \to \{0,1\}$ naturally correspond to the extended arc ideal of extended arcs~$(a, b, n, S)$ such that~$\north(c) = 0$ for~$c \in S$ and~$\south(c) = 0$ for~$c \in {]a, b[} \ssm S$.
On these particular extended arc ideals, the concatenation and selection corresponds to that defined in Section~\ref{subsec:boundedCrossings}.
The result immediately follows by Proposition~\ref{prop:decorationSubset}.
\end{proof}

In contrast, the reader can check that none of the Hopf algebras of~\cite{LawReading, Giraudo, Pilaud-brickAlgebra} is a Hopf subalgebra of~$\b{k}\f{D}^\star$.


\addtocontents{toc}{\vspace{.3cm}}
\section*{Acknowledgements}

I thank two anonymous referees for helpful comments that improved the presentation, fixed imprecise statements, and suggested relevant bibliographic connections.


\bibliographystyle{alpha}
\bibliography{arcDiagramAlgebra}
\label{sec:biblio}

\end{document}